\newcommand{\real}{\mathbb{R}}
\newcommand{\ints}{\mathbb{Z}}
\newcommand{\nats}{\mathbb{N}}
\newcommand{\proj}{\mathbf{P}}
\newcommand{\HH}{\mathbb{H}}
\newcommand{\Isom}{\mathrm{Isom}}
\DeclareMathOperator{\PGL}{PGL}
\DeclareMathOperator{\SL}{SL}
\newcommand{\eps}{\epsilon}
\newcommand{\del}{\partial}
\DeclareMathOperator{\id}{id}
\DeclareMathOperator{\diam}{diam}
\DeclareMathOperator{\Stab}{Stab}
\newcommand{\actson}{\curvearrowright}
\DeclareMathOperator{\Aut}{Aut}
\newcommand{\grop}[3]{\langle #2, #3 \rangle_{#1}}
\newcommand{\dirac}{\mathcal{D}}
\newcommand{\horosphere}{\mathcal{H}}
\newcommand{\cone}{\mathcal{C}}
\newcommand{\shadow}{\mathcal{O}}
\newcommand{\limshade}{\mathcal{L}}
\newcommand{\st}{\,|\,}
\newcommand{\flip}{\iota}
\newcommand{\Epsy}{\mathcal{E}}
\newcommand{\Em}{\mathcal{M}}
\newcommand{\Leb}{\mathcal{L}}
\DeclareMathOperator{\wlim}{lim*}
\theoremstyle{plain}
\newtheorem{thm}{Theorem}
\newtheorem{lem}[thm]{Lemma}
\newtheorem{prop}[thm]{Proposition}
\newtheorem{corn}[thm]{Corollary}
\newtheorem*{thm*}{Theorem}
\newtheorem*{cor}{Corollary}
\theoremstyle{definition}
\newtheorem{defn}[thm]{Definition}
\title{Ergodicity and equidistribution in strictly convex Hilbert geometry}
\author{Feng Zhu}
\begin{document}

\begin{abstract}
    We show that dynamical and counting results characteristic of negatively-curved Riemannian geometry, or more generally CAT(-1) or rank-one CAT(0) spaces, also hold for geometrically-finite strictly convex projective structures equipped with their Hilbert metric.
    
    More specifically, such structures admit a finite Sullivan measure; with respect to this measure, the Hilbert geodesic flow is strongly mixing, and orbits and primitive closed geodesics equidistribute, allowing us to asymptotically enumerate these objects.
\end{abstract}

\maketitle

In \cite{Margulis}, Margulis established counting results for uniform lattices in constant negative curvature, or equivalently for closed hyperbolic manifolds, by means of ergodicity and equidistribution results for the geodesic flows on these manifolds with respect to suitable measures. Thomas Roblin, in \cite{Roblin}, obtained analogous ergodicity and equidistribution results in the more general setting of CAT($-1$) spaces. These results include ergodicity of the horospherical foliations, mixing of the geodesic flow, orbital equidistribution of the group, equidistribution of primitive closed geodesics, and, in the geometrically finite case, asymptotic counting estimates. 
Gabriele Link later adapted similar techniques to prove similar results in the even more general setting of rank-one isometry groups of Hadamard spaces in \cite{Link}. 

These results do not directly apply to manifolds endowed with strictly convex projective structures, considered with their Hilbert metrics and associated Bowen-Margulis measures, since strictly convex Hilbert geometries are in general not CAT($-1$) or even CAT(0) (see e.g. \cite[App.\, B]{Egloff}.)
Nevertheless, these Hilbert geometries exhibit substantial similarities to Riemannian geometries of pinched negative curvature. In particular, 
there is a good theory of Busemann functions and of Patterson-Sullivan measures on these geometries.
Crampon and Marquis, in \cite{crampon_these,CM12,CM14}, used this to study the geodesic flow on these geometries and show that this flow, when restricted to the non-wandering set and under an additional ``asymptotically hyperbolic'' assumption on the cusps, this flow is uniformly hyperbolic and topologically mixing.

In this note, we show that we can obtain strong mixing of the geodesic flow with respect to a natural measure on the unit tangent bundle, the Sullivan measure (see \S\ref{sec:measures} for details), as well as some of Roblin's equidistribution results---orbital equidistribution of the group and equidistribution of primitive closed geodesics---in the setting of geometrically-finite strictly convex Hilbert geometries (``our setting'').

Briefly, $\Omega$ will be a strictly convex domain with $C^1$ boundary, i.e. an open set of $\proj(\real^{n+1})$ contained in an affine chart such that $\del\Omega$ contains no sharp points or line segments (see \S\ref{sec:hilbgeom} for a more precise description), and $\Gamma \leq \Aut(\Omega)$ will be a discrete group of projective automorphisms preserving $\Omega$. Then $\Omega / \Gamma$ is an orbifold (if $\Gamma$ is torsion-free, a manifold) equipped with a convex projective structure.

\begin{thm*}
Let $\Omega \subset \proj(\real^{n+1})$ be a strictly convex projective domain with $C^1$ boundary, and let $\Gamma \leq \Aut(\Omega)$ be a non-elementary discrete group. 
\begin{enumerate}[(i)]
\item (Theorem \ref{thm:finite_BMmeas}) If $\Gamma$ acts geometrically finitely on $\Omega$, then
$S\Omega / \Gamma$ admits a finite Sullivan measure $m_\Gamma$, associated to a $\Gamma$-equivariant conformal density $\mu$ of dimension $\delta = \delta(\Gamma)$.
\end{enumerate}
Suppose further that $S \Omega / \Gamma$ admits a finite Sullivan measure $m_\Gamma$ associated to a $\Gamma$-equivariant conformal density of dimension $\delta(\Gamma)$. Then 
\begin{enumerate}[(i)]
\setcounter{enumi}{1}
\item (Theorem \ref{thm:mixing})
the Hilbert geodesic flow $(g_\Gamma^t)_{t \in \real}$ on $S\Omega / \Gamma$ is mixing for $m_\Gamma$;

\item (Theorem \ref{thm:orbit_equidist})
for all $x, y \in \Omega$, 
\[ \delta \|m_\Gamma\| e^{-\delta t} \sum_{\substack{\gamma \in \Gamma\\ d_\Omega(x, \gamma y) \leq t}} \dirac_{\gamma y} \otimes \dirac_{\gamma^{-1} x} \] 
converges weakly in $C(\bar\Omega \times \bar\Omega)^*$ to $\mu_x \otimes \mu_y$ as $t \to \infty$;
\item (Theorem \ref{thm:pcgeod_equidist})
let $\mathcal{G}_\Gamma(\ell)$ be the set of primitive closed geodesics of length at most $\ell$. As $\ell \to +\infty$,
\[ \delta \ell e^{-\delta \ell} \sum_{g \in \mathcal{G}_\Gamma(\ell)} \dirac_g \to \frac{m_\Gamma}{\|m_\Gamma\|} \]
(a) in $C_c(S\Omega / \Gamma)^*$, and, if furthermore $\Gamma$ acts geometrically finitely on $\Omega$, also
(b) in $C_b(S\Omega / \Gamma)^*$.
\end{enumerate}
\end{thm*}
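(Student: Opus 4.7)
The plan is to transplant the Patterson--Sullivan--Roblin program, as extended by Link to rank-one Hadamard spaces, into strictly convex Hilbert geometry. The ingredients used in the CAT$(-1)$ case are Busemann functions, a shadow lemma, a well-behaved $\delta$-conformal density on $\partial\Omega$, and a Hopf parametrization of $S\Omega$; each of these has been developed in the Hilbert setting by Crampon and Marquis, so the task reduces to verifying that Roblin's and Link's arguments go through with these replacements. The four assertions are not independent: (i) produces the measure whose behavior the rest of the theorem describes, (ii) is the ergodic-theoretic engine, (iii) follows from (ii) by a thickening argument, and (iv) follows from (iii) by a closed-orbit bookkeeping argument. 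I would prove them in that order.

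For (i), I would follow the Dal'bo--Otal--Peign\'e strategy. After constructing a $\delta(\Gamma)$-conformal density $\mu$ from the Patterson limiting procedure applied to the Poincar\'e series of $\Gamma$, and defining $m_\Gamma$ on $S\Omega$ by the Hopf formula, use a geometrically finite decomposition of $S\Omega/\Gamma$ into a compact core plus finitely many cusp neighborhoods. Finiteness on the core is immediate from the Shadow Lemma; finiteness in each cusp reduces to a Poincar\'e-series estimate over the parabolic stabilizer $\Gamma_P$ which converges precisely when $\delta(\Gamma_P) < \delta(\Gamma)$, a strict critical gap I expect to follow from the asymptotic hyperbolicity of cusps in this setting. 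For (ii), the Sullivan measure pulls back locally to the product of two boundary densities (twisted by the Gromov product) with Lebesgue measure in the flow direction, so mixing of $(g^t_\Gamma)$ reduces via Babillot's criterion to non-arithmeticity of the length spectrum --- which for non-elementary $\Gamma$ acting on a strictly convex domain should be extractable from a ping-pong construction producing hyperbolic elements with incommensurable translation lengths.

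For (iii), I would thicken $x$ and $y$ to small balls, apply mixing to continuous bump functions transverse to the flow, and use the Shadow Lemma to translate flow-box correlations into statements about the orbital sum of Dirac masses on $\bar\Omega \times \bar\Omega$; this is Roblin's standard correlation calculation. For (iv), each primitive conjugacy class of a hyperbolic $\gamma \in \Gamma$ corresponds via an axis construction to a closed geodesic, so summation over primitive closed geodesics can be related to summation over hyperbolic conjugacy classes, and localizing (iii) near the diagonal yields part (a). The upgrade (b) to $C_b(S\Omega/\Gamma)^*$ in the geometrically finite case requires a Margulis-type bound on the mass contribution of closed geodesics that penetrate deep into cusps, in the spirit of Paulin--Pollicott--Schapira. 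The main obstacle throughout is that strictly convex Hilbert geometries are neither CAT$(-1)$ nor CAT$(0)$, so I expect the real work lies in re-deriving the geometric lemmas that Roblin and Link invoke as standard --- shadow asymptotics, Busemann comparison along quasi-geodesics, visibility-type inequalities, and sufficient control on the cusp neighborhoods --- directly from the $C^1$ strict convexity hypothesis and the Crampon--Marquis cusp model.
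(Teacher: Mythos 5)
Your plan follows the same overall architecture as the paper---Patterson's construction, a Dal'bo--Otal--Peign\'e-style cusp estimate for finiteness, Babillot's criterion for mixing, and Roblin's correlation and closed-orbit arguments (with Link's corrections) for the two equidistribution statements---so the strategy is sound. Two points where your route diverges from, or would run into trouble compared with, what the paper actually does. First, the critical gap $\delta(\Gamma_P)<\delta(\Gamma)$ does \emph{not} come from asymptotic hyperbolicity of the cusps: that hypothesis appears nowhere in the theorem (it is only needed by Crampon--Marquis for uniform hyperbolicity of the flow), and relying on it would weaken the statement. The paper instead uses the exact computation $\delta_P=r/2$ for a rank-$r$ parabolic subgroup from \cite{CM14}, and obtains strictness by a ping-pong/free-product argument pairing $P$ with a hyperbolic element, which forces the Poincar\'e series of $\Gamma$ to diverge at some $s_0>r/2$. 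One also needs slightly more than the bare gap, namely convergence of $\sum_{p\in P} d_\Omega(x,px)\,e^{-\delta_\Gamma d_\Omega(x,px)}$, which the gap supplies. Second, for mixing you propose to manufacture non-arithmeticity of the length spectrum by a ping-pong construction of hyperbolic elements with incommensurable translation lengths; controlling translation lengths that precisely is delicate, and the paper instead imports density of the group generated by the length spectrum directly from Crampon--Marquis's topological mixing result. The genuinely non-trivial step your outline compresses into ``re-deriving the geometric lemmas'' is the cross-ratio machinery of Ricks: one must define the cross-ratio via Busemann functions, show it is basepoint-independent, realize it as a holonomy around a quadrilateral of stable/unstable horospheres, and identify $B(\gamma^-,\gamma^+,\gamma\xi,\xi)=2\ell(\gamma)$, all without a CAT$(0)$ comparison---this is where most of the work in the mixing proof actually lives. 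With those two adjustments your proposal matches the paper's proof.
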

Here $d_\Omega$ denotes the Hilbert metric, $\dirac_x$ denotes a Dirac mass at $x$, and given $g \in \mathcal{G}_\Gamma(\ell)$, $\dirac_g$ denotes the normalized Lebesgue measure supported on $g$. $C(X)^*$, $C_c(X)^*$ and $C_b(X)^*$ denote the weak*-duals to, respectively, the space of continuous functions, the space of compactly-supported continuous functions, and the space of bounded continuous functions on $X$.

The proof of the mixing result follows the lines of arguments from \cite{Babillot} and \cite{Ricks}, using cross-ratios, length spectrums, and topological mixing.
Measure-theoretic mixing results such as this may be viewed as more quantitative versions of topological mixing, and can be useful tools for establishing equidistribution results such as those which follow.

The proofs of the equidistribution results follow Roblin's proofs closely,
making heavy use of mixing and of cones in the space and shadows on the boundary without reference to any notion of angle, which is not well-defined in our setting. 
These equidistribution results have as corollaries asymptotic counting results for orbits or closed geodesics, and may have applications to the study of the deformation space of strictly convex projective structures on hyperbolizable manifolds.

We remark that the result in (iv)(b) relies on structural properties of cusp regions which are more integrally linked to the geometric finiteness condition. On the other hand, the mixing and equidistribution results in (ii), (iii) and (iv)(a) apply to larger classes of discrete subgroups $\Gamma \leq \Aut(\Omega)$ admitting finite Sullivan measures; it may be interesting to explore just how large this class is.

Before proceeding with our principal contents, we will take a moment to point out further connections, involving Patterson-Sullivan theory and counting results, to the more general class of Anosov representations, defined by Labourie \cite{Labourie} and Guichard--Wienhard \cite{GW}, which may be viewed as a higher-rank analogue of convex cocompactness. 

\subsection*{Counting and Patterson--Sullivan theory in higher rank} \label{sec:counting}

As alluded to above, Roblin's results continue a long line of equidistribution and counting results in negative curvature. 
Here we survey related equidistribution and counting results in the setting of word-hyperbolic subgroups of higher-rank semisimple Lie groups. In particular, this includes the setting of higher Teichm\"uller theory, which studies especially nice surface subgroups of higher-rank Lie groups.

Holonomies of strictly convex projective structures on closed 
hyperbolizable manifolds are one class of such hyperbolic subgroups; in particular, they satisfy the Anosov condition defined in \cite{Labourie} and \cite{GW}.
Holonomies of geometrically-finite strictly convex projective structures are not Anosov, but satisfy a relative version of the Anosov condition (see \cite{reldomreps} or \cite{KL}.)

In \cite{Sambarino_cvx}, Sambarino studies the class of \emph{strictly convex subgroups}, which overlaps with the class of Anosov subgroups, and includes Benoist subgroups as examples. He obtains equidistribution and counting results similar to the results presented here, but using a slightly different notion of length: 
\begin{thm*}[{\cite[Th.\,A \& B]{Sambarino_cvx}}]
Let $\Gamma < \PGL(d,\real)$ be strictly convex.
There exist $h, c > 0$ and probability measures $\mu, \bar\mu$ on $\del_\infty\Gamma$ such that, as $t \to \infty$,
\[ ce^{-ht} \sum_{\gamma \in \Gamma : \log \|\gamma\| < t} \dirac_{\gamma^-} \otimes \dirac_{\gamma^+} \to \bar\mu \otimes \mu \]
in $C(\del\Omega \times \del\Omega)^*$ and
\[ hte^{-ht}  \#\{ [\gamma] \in [\Gamma] \mbox{ primitive}: \lambda_1(\gamma) \leq t\} \to 1 \]
where $[\Gamma]$ is the set of conjugacy clases of $\Gamma$ and $\lambda_1(\gamma)$ is the logarithm of the largest eigenvalue of $\gamma$.
\end{thm*}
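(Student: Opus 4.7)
The plan is to develop a Patterson--Sullivan and Bowen--Margulis--Sullivan theory for the norm cocycle $\sigma(\gamma,\cdot) = \log\|\gamma\|$ on the Gromov boundary $\del_\infty\Gamma$, establish mixing of an associated reparametrized flow on a bundle over $\Gamma\backslash\del_\infty^{(2)}\Gamma$, and then push this through Roblin's machinery to extract both statements in parallel. Strict convexity in Sambarino's sense is precisely what makes $\log\|\cdot\|$ behave as a H\"older, Anosov-type cocycle on $\del_\infty\Gamma$ with proximal, strictly positive periods equal to $\lambda_1(\gamma)$, so the Patterson--Sullivan construction goes through.

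First I would set $h$ to be the critical exponent of the Poincar\'e series $\sum_{\gamma \in \Gamma} e^{-s \log\|\gamma\|}$ and build $h$-conformal densities $\mu$ and $\bar\mu$ on $\del_\infty\Gamma$, for the norm cocycle and its contragredient respectively, by a standard Patterson limit argument. Anosov-type transversality of the boundary maps then yields a shadow lemma: the $\mu$-mass of the shadow of a $\|\cdot\|$-ball around $\gamma\cdot o$ is comparable to $e^{-h\log\|\gamma\|}$, with bounded overlap. Next I would set up the reparametrized flow $(g^t)$ on $\Gamma\backslash U\Gamma$, where $U\Gamma = \del_\infty^{(2)}\Gamma \times \real$ carries the $\Gamma$-action twisted by the Busemann cocycle associated to $\log\|\cdot\|$, together with its Bowen--Margulis measure $m$ built from a Gromov-product kernel against $d\mu\,d\bar\mu\,dt$. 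Finiteness of $m$ follows from the shadow lemma and the absence of parabolics (since $\Gamma$ is word-hyperbolic).

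The hard part is \emph{mixing} of $(g^t)$ for $m$. Following Babillot and Ricks, I would reduce mixing to non-arithmeticity of the length spectrum $\{\lambda_1(\gamma) : \gamma \in \Gamma\}$; this can be obtained either from Zariski density of $\Gamma$ in $\PGL(d,\real)$ (a consequence of non-elementarity together with strict convexity) or from an exchange-of-axes argument applied to three loxodromic elements in general position. Granted mixing, statement (1) is a Margulis--Roblin flow-box computation: approximate $\sum_{\log\|\gamma\| < t} \dirac_{\gamma^-}\otimes\dirac_{\gamma^+}$ by pairings of small flow boxes sitting near a basepoint and its $\gamma$-translate, use the shadow lemma to convert $\Gamma$-orbit counts in $\|\cdot\|$-balls into boundary mass, and apply mixing to the indicator functions of these boxes to recover $\bar\mu\otimes\mu$. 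Statement (2) is the Margulis closed-orbit argument: the periods of the periodic orbits of $(g^t)$ are exactly the numbers $\lambda_1(\gamma)$ over primitive conjugacy classes, so equidistribution of closed orbits of period at most $t$, which mixing delivers via the classical telescoping estimate in the finite-measure setting, produces the asserted asymptotic $hte^{-ht} \#\{[\gamma] : \lambda_1(\gamma) \leq t\} \to 1$.
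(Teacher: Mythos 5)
First, a framing point: this statement is not proven in the paper at all --- it is quoted from \cite{Sambarino_cvx} as part of a survey of related higher-rank results, and the paper records that the cited proof goes through Patterson--Sullivan theory \emph{in conjunction with the thermodynamical formalism}: one codes a reparametrization of the Gromov geodesic flow of the word-hyperbolic group $\Gamma$ by a H\"older cocycle with periods $\lambda_1(\gamma)$ via a Markov coding, and then invokes transfer-operator and Parry--Pollicott machinery to count closed orbits and to produce the equilibrium (Bowen--Margulis) measure whose boundary projections are $\mu$ and $\bar\mu$. Your proposal takes a genuinely different route --- the Roblin-style route this paper itself uses for Hilbert geometries --- building conformal densities for the norm cocycle directly, proving a shadow lemma, establishing mixing of the Bowen--Margulis measure via non-arithmeticity of the length spectrum, and running the Margulis--Roblin flow-box and closed-orbit arguments. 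That route is viable and has been carried out in the literature for Anosov-type subgroups; its advantage is that it avoids symbolic dynamics entirely and needs only finiteness plus mixing, while the thermodynamic route yields finer information (equilibrium states for general potentials, and in the hyperconvex case error terms) that a bare mixing argument cannot.

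Two points in your sketch need more care. First, finiteness of $m$ does not follow from ``the shadow lemma and the absence of parabolics''; the real source of finiteness is that for strictly convex $\Gamma$ the twisted action on $\del_\infty^{(2)}\Gamma \times \real$ is properly discontinuous and \emph{cocompact}, which is a nontrivial theorem and must be invoked. Second, statements (1) and (2) use two different notions of length --- the Cartan-type quantity $\log\|\gamma\|$ in the orbital count and the Jordan projection $\lambda_1(\gamma)$ in the closed-orbit count --- whereas the periods of your flow are only the $\lambda_1(\gamma)$. To run the flow-box argument for (1) you need the comparison $\log\|\gamma\| = \sigma(\gamma,\xi) + O(1)$ for $\xi$ ranging over a boundary set away from the repelling hyperplane of $\gamma$; this is exactly where strict convexity (transversality of the equivariant boundary maps) enters, and it should be stated as an explicit lemma, since without it the two counts are not a priori governed by the same flow. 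Non-arithmeticity of $\{\lambda_1(\gamma)\}$ does indeed follow from Zariski density via Benoist's theorem on limit cones, as you suggest.
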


If $\Gamma$ is \emph{hyperconvex}, a more restricted class which includes the holonomies of strictly convex projective structures on closed {\it surfaces} as examples,
more fine-grained results \cite[Th.\,C]{Sambarino_cvx} \cite{Sambarino_hypcvx} can be obtained. 

These results are also proven using Patterson--Sullivan theory, in this case in conjunction with the thermodynamical formalism.
These methods have also been extended to the setting of special-orthogonal projective Anosov subgroups by Carvajales in \cite{LeonC} to obtain similar counting results.

\subsection*{Organization}
The rest of the paper is organised thus: section 1 collects the necessary background on strictly convex Hilbert geometries and geometric finiteness in that setting; section 2 describes the construction and properties of Patterson-Sullivan measures and Bowen-Margulis measures for group acting geometrically finitely on such geometries. 
Sections \ref{sec:mixing}, \ref{sec:orbit_equidist} and \ref{sec:pcgeod_equidist} describe the proofs of Theorems \ref{thm:mixing}, \ref{thm:orbit_equidist}, and \ref{thm:pcgeod_equidist} respectively.

\subsection*{Acknowledgements} 
The author thanks Pierre-Louis Blayac, Harrison Bray, Fanny Kassel, and Ralf Spatzier for helpful discussions, Ludovic Marquis for supplying a citation, and Dick Canary for helpful comments on early drafts.

The author was partially supported by U.S. National Science Foundation (NSF) grant FRG 1564362, and acknowledges support from NSF grants DMS 1107452, 1107263, 1107367 ``RNMS: Geometric Structures and Representation Varieties'' (the GEAR Network).
This project has also received funding from the European Research Council (ERC) under the European Union’s Horizon 2020 research and innovation programme (ERC starting grant DiGGeS, grant agreement No. 715982).

\section{Hilbert geometry} \label{sec:hilbgeom}
\subsection{Strictly convex Hilbert geometries}

A {\bf properly convex} domain $\Omega \subset \proj(\real^{n+1}) = \real\proj^n$ is a domain contained in some affine chart and bounded and convex in that affine chart, in the usual Euclidean sense. A properly convex domain $\Omega \subset \real\proj^n$ is {\bf strictly convex} if its boundary contains no line segments.

Given a properly convex domain $\Omega$, we can define the Hilbert metric $d_\Omega$ on it as follows: given $x, y \in \Omega$, extend the straight line between them so that it meets $\del \Omega$ in $a$ and $b$ (and $x$ is in between $a,y$.) Then $d_\Omega(x,y) = \frac 12 \log \frac{|ay||bx|}{|ax||by|}$, where $|\cdot|$ denotes Euclidean distance in the affine chart.

\begin{figure}[ht!]
    \centering
    \includegraphics[width=.32\textwidth]{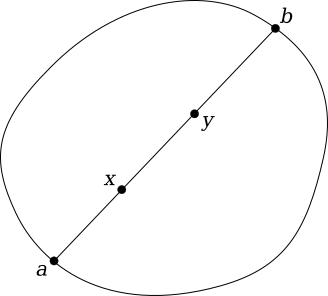}
\end{figure}

This can be shown to be a metric, and to be projectively invariant: in particular, $d_\Omega$ is well-defined independent of the choice of affine chart.

$d_\Omega$ is a Finsler metric,
with Finsler norm at $x \in \Omega$ given by
\begin{equation}
\frac12 \left( \frac1{|xv^+|} + \frac1{|xv^-|} \right) |dx| 
\notag 
\end{equation} 
where $v^\pm$ denote the forward / backward endpoints (respectively) of the geodesic through $x$ tangent to the vector $v$ whose norm we are measuring.

Straight (Euclidean) lines are always geodesics for $d_\Omega$. When $\Omega$ is {\it strictly} convex, these are the unique geodesics for this metric.

For $\Omega$ a strictly convex domain equipped with its Hilbert metric $d_\Omega$, the Hilbert geodesic flow $(g^t)_{t\in\real}$ on the unit tangent bundle $S \Omega$ is the unit (Hilbert) speed flow along the geodesics. We remark that there is a natural involution $\flip$ on this flow, given by running the flow backwards instead of forwards.

The geometry of strictly convex domains $\Omega \subset \real\proj^n$ equipped with the Hilbert metric shares many features with negative-curved Riemannian geometries, even beyond what may be expected given $\delta$-hyperbolicity, although they are not in general Riemannian or even CAT($-1$). 
For instance, nearest-point projection from a point $z$ to a Hilbert geodesic $\ell \subset \Omega$ is well-defined on the nose, not just coarsely as in the case of general $\delta$-hyperbolic space. This follows from the strict convexity of metric balls \cite[Lem.\,1.7]{CLT} and hence of the distance function $d(z,\cdot): \ell \to \real$.

Given a properly convex domain $\Omega$, we write $\Aut(\Omega)$ to denote the set of projective automorphisms preserving $\Omega$, i.e. 
\[ \Aut(\Omega) := \{T \in \PGL(\real^{n+1}) = \Aut(\real\proj^n) : T(\Omega) \subset \Omega \} .\]
Projective automorphisms $T \in \Aut(\Omega)$ are isometries of $\Omega$ equipped with the Hilbert metric $d_\Omega$; in fact, in the strictly convex case, $\Aut(\Omega)$ coincides with the isometry group of $(\Omega, d_\Omega)$ (see e.g. \cite[Prop.\,10.2]{Marquis}.)

The isometries of $(\Omega, d_\Omega)$ may be classified as hyperbolic, parabolic, or elliptic: the classification can be done in terms of translation length, or by looking at properties of matrices in $\SL(\real^{n+1})$ considered as an isomorphic image / lift of $\PGL(\real^{n+1})$. (For further details, see e.g. \cite[\S3]{CM12}.) As in the hyperbolic case, closed geodesics on a quotient $\Omega / \Gamma$ lift to axes of hyperbolic isometries on $\Omega$.

Any discrete subgroup $\Gamma \leq \Aut(\Omega)$ acts properly discontinuously on $\Omega$, with quotient $\Omega / \Gamma$ an orbifold (for $\Gamma$ torsion-free, a manifold) equipped with a {\it convex projective structure}, i.e. an atlas of charts to $\real\proj^n$ which locally give the orbifold the geometry of projective space. 
The Hilbert metric $d_\Omega$ descends to a metric on the quotient $\Omega / \Gamma$, and the Hilbert geodesic flow $(g^t)$ descends to a flow $(g_\Gamma^t)$ on the quotient $S\Omega / \Gamma $. 

For any discrete subgroup $\Gamma \leq \Aut(\Omega)$, we will have locally-finite convex fundamental domains in $\Omega$ for the action of $\Gamma$, which can be a helpful tool for studying the geometry of the quotient $\Omega / \Gamma$ and the dynamics of the geodesic flow on the quotient $S\Omega / \Gamma$:
\begin{thm}[\cite{JJLee}; {\cite[Th.\,4.5]{Marquis}}] \label{thm:cvx_lf_fundoms}
Let $\Gamma \leq \PGL(\real^{n+1})$ be a discrete subgroup acting on a properly convex open set $\Omega \subset \real\proj^n$. There exists a convex open fundamental domain $\mathcal{F}$ for the action of $\Gamma$ on $\Omega$, which is the intersection of half-spaces defined by a locally-finite family of hyperplanes.
\end{thm}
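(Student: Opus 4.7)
The natural first attempt is to imitate the Dirichlet-domain construction from Riemannian hyperbolic geometry, but the usual Hilbert-metric bisectors $\{y : d_\Omega(y, x_0) = d_\Omega(y, \gamma x_0)\}$ are generally not projective hyperplanes, since Hilbert balls are not projectively round. So a naive Dirichlet domain will be convex in the Hilbert metric but will not obviously be cut out by projective half-spaces of the sort the statement demands. The fix is to replace the metric bisector with a projectively-defined analogue, built using the Vinberg characteristic function of a lift of $\Omega$ to a cone.

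Concretely, lift $\Omega$ to a properly convex cone $C \subset \real^{n+1}$ and consider $\phi_C(x) := \int_{C^*} e^{-\langle x, \xi\rangle}\, d\xi$. This is smooth, strictly log-convex on $C$, and $\Aut(C)$-equivariant up to a determinant factor; its logarithmic differential $x \mapsto x^\ast := -d\log \phi_C(x)$ is an $\Aut(C)$-equivariant diffeomorphism $C \to C^*$ satisfying $\langle x, x^\ast\rangle = n+1$. Fix a basepoint $x_0 \in \Omega$ with lift $\tilde x_0 \in C$, and for each $\gamma \in \Gamma \setminus \{1\}$ set
\[ H_\gamma := \proj\bigl(\ker(\tilde x_0^\ast - (\gamma \tilde x_0)^\ast)\bigr), \]
with $\mathcal{F}_\gamma \subset \Omega$ the projective half-space on which $\langle \cdot, \tilde x_0^\ast\rangle \le \langle \cdot, (\gamma \tilde x_0)^\ast\rangle$. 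Put $\mathcal{F} := \Omega \cap \bigcap_{\gamma \neq 1} \mathrm{int}(\mathcal{F}_\gamma)$; this is by construction an open convex intersection of projective half-spaces.

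It remains to check three things: (a) $\mathcal{F}$ is a fundamental domain; (b) the defining family of hyperplanes is locally finite; (c) closure of the argument via a Hilbert/Vinberg comparison. For (a), strict log-convexity of $\phi_C$ implies that for any $\tilde x$ the function $\gamma \mapsto \langle \tilde x, (\gamma \tilde x_0)^\ast\rangle$ has at most one minimizer, so interior points of $\mathcal{F}$ have trivial $\Gamma$-stabilizer within the orbit; proper discontinuity of $\Gamma \curvearrowright \Omega$ and properness of $x \mapsto x^\ast$ yield that a minimizer exists, so every orbit meets $\overline{\mathcal{F}}$. The main obstacle will be (b): for a compact $K \subset \Omega$, I need that only finitely many $\gamma$ have $H_\gamma \cap K \neq \emptyset$. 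The key estimate is that $H_\gamma \cap K \neq \emptyset$ forces $(\gamma \tilde x_0)^\ast$ to lie in a bounded subset of $C^*$ determined by $K$ and $\tilde x_0^\ast$; pulling back under the proper $\Aut(C)$-equivariant duality $C \to C^*$, this traps $\gamma \tilde x_0$ in a compact subset of $C$, and discreteness of $\Gamma$ gives finiteness. The technical heart of the proof, and the part most likely to require real work, is establishing this compactness quantitatively, which amounts to comparing the Vinberg pseudo-distance with the Hilbert metric on compact subsets of $\Omega$.
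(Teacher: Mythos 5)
The paper does not actually prove this statement: it is imported wholesale from \cite{JJLee} and \cite[Th.\,4.5]{Marquis}, so there is no in-text argument to compare yours against. That said, your sketch is a reasonable reconstruction of the strategy behind those references: the correct diagnosis that Hilbert-metric bisectors are not projective hyperplanes, and the fix of working in the cone $C$ over $\Omega$ and cutting with genuinely linear ``bisectors'' $\langle \cdot, \tilde x_0^\ast\rangle = \langle \cdot, (\gamma\tilde x_0)^\ast\rangle$ built from a projectively natural, $\Aut(C)$-equivariant duality, is exactly the right move. Your local-finiteness argument also has the right shape: for $\tilde x$ ranging over a compact subset of the open cone and $\eta \in \overline{C^*}$, the pairing satisfies $\langle\tilde x,\eta\rangle \geq c\|\eta\|$ uniformly, so $H_\gamma \cap K \neq \varnothing$ bounds $\|(\gamma\tilde x_0)^\ast\|$, and discreteness of the dual orbit finishes it.

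Two soft spots are worth fixing. First, $\mathcal{F}$ is \emph{not} ``by construction open'': an infinite intersection of open half-spaces need not be open, and openness is precisely a consequence of the local finiteness you defer to step (b), so the logical order must be reversed. Second, the appeal to strict log-convexity of $\phi_C$ to get ``at most one minimizer'' of $\gamma \mapsto \langle\tilde x, (\gamma\tilde x_0)^\ast\rangle$ is misdirected: ties do occur (they are exactly the points on the bisecting hyperplanes, i.e.\ the boundary faces of the tiling), and no convexity argument prevents them. What you actually need is (i) that distinct orbit points give distinct functionals, which follows from injectivity of $x \mapsto x^\ast$ together with choosing $x_0$ with trivial stabilizer (or quotienting by $\Stab_\Gamma(x_0)$ --- you never address elliptic elements fixing $x_0$, for which $H_\gamma$ is undefined); (ii) that disjointness of the open translates $\gamma\mathcal{F}$ is automatic from the strict inequalities; and (iii) that a minimizer exists for every $\tilde x$, which is the properness of $\eta \mapsto \langle\tilde x,\eta\rangle$ on the closed dual cone restricted to the discrete orbit $\Gamma^\ast \tilde x_0^\ast$. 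With those repairs the argument closes, and it is in the spirit of the cited proofs.
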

Here local finiteness means, in the first instance, that any compact subset of $\Omega$ intersects only a finite number of translates of the fundamental domain, and in the second instance that any compact subset of $\Omega$ intersects only a finite number of hyperplanes in the family. Note in particular that the boundary $\overline{\mathcal{F}} \smallsetminus \mathcal{F}$ is a locally-finite piecewise codimension-one submanifold.

We say $\Omega$ is {\bf divisible} if there is some discrete subgroup $\Gamma \leq \Aut(\Omega)$ whose action on $\Omega$ is in addition co-compact. The quotients $\Omega / \Gamma$ in this case are most closely analogous to closed hyperbolic manifolds, and share many of their good geometric and dynamical properties. In particular, in \cite{Benoist_CDI} Benoist shows that if 
$\Omega \subset \real\proj^n$ is a properly convex domain which is divisible by $\Gamma$, then the following are equivalent:
\begin{enumerate}[i)]
\item $\Omega$ is strictly convex,
\item $\del\Omega$ is $C^1$,
\item $\Gamma$ is $\delta$-hyperbolic,
\item $(\Omega, d_\Omega)$ is $\delta$-hyperbolic.
\end{enumerate}
In the course of the same work, Benoist also proved that if $\Omega$ is divisible, the geodesic flow on the quotient $\Omega / \Gamma$ is Anosov; in particular one has a splitting $TS\Omega = \real X \oplus E^s \oplus E^u$ where $X$ is the flow direction and $E^s$ and $E^u$ the stable and unstable distributions, which are respectively tangent to stable and unstable submanifolds of $S\Omega$ (which project to horospheres in $\Omega$.)


\subsection{Geometric finiteness}

We will be interested in the broader class of $\Omega / \Gamma$ which are not necessarily compact, but where any non-compactness is controlled in the precise sense prescribed by geometric finiteness.

The notion of geometric finiteness arose first in the setting of Kleinian groups, and has subsequently been extended to higher dimensions and the more general setting of pinched negative curvature in \cite{Bowditch_GF}; the group-theoretic notion of relative hyperbolicity, see e.g. \cite{Bowditch_relhyp}, may be seen as an extension of geometric finiteness to a more general $\delta$-hyperbolic setting. This is essentially a notion associated with negatively-curved geometry, and generalizes the geometric and dynamical behaviour of finite-volume quotients of hyperbolic space.

Crampon and Marquis defined an analogous notion of geometric finiteness for strictly convex Hilbert geometries with $C^1$ boundary: 
\begin{defn}
Let $\Omega$ be a strictly convex domain with $C^1$ boundary and $\Gamma \leq \Aut(\Omega)$ be a discrete subgroup.

The {\bf limit set} $\Lambda_\Gamma = \Lambda_\Gamma(\Omega)$ is the subset of $\del\Omega$ given by $\overline{\Gamma \cdot x} \setminus (\Gamma \cdot x)$ for any $x \in \Omega$. $\Gamma$ is elementary if $\Lambda_\Gamma$ has at most two points; otherwise, if $\Gamma$ is non-elementary, $\Lambda_\Gamma$ is infinite and perfect.

$\xi \in \Lambda_\Gamma$ is a {\bf conical limit point} if there exists a sequence of elements $(\gamma_n) \subset \Gamma$ such that for some (and hence, by the triangle inequality, for any) $x \in \Omega$, $\gamma_n x \to \xi$ and $\sup_n d_\Omega(\gamma_n x, [x\xi)) < \infty$.

$\xi \in \Lambda_\Gamma$ is a {\bf uniformly bounded parabolic point} if the stabilizer $\Stab_\Gamma(\xi)$ acts cocompactly on $\mathcal{D}_\xi (\overline{CH(\Lambda_\Gamma \setminus \{\xi\})})$, the set of lines through $\xi$ which meet the closure of the convex hull of $\Lambda_\Gamma \setminus \{\xi\}$.
\end{defn}

We remark that our definition of conical limit points is equivalent, in this setting, to a convergence group characterization given purely in terms of the action of $\Gamma$ on $\Lambda_\Gamma \subset \del\Omega$ \cite[Lem.\,5.10]{CM12}.

\begin{defn}[\cite{CM12}]
Let $\Omega$ be a strictly convex domain with $C^1$ boundary, $\Gamma \leq \Aut(\Omega)$ be a discrete subgroup, and $M = \Omega / \Gamma$ be the corresponding quotient.

The group action $\Gamma \actson \Omega$ is {\bf geometrically finite} if every point in the limit set $\Lambda_\Gamma$ is either a conical limit point or a uniformly bounded parabolic point. In this case we also say that the quotient $\Omega / \Gamma$ is geometrically finite.
\end{defn}

Crampon and Marquis proved that this definition, given in terms of the dynamics of the group acting on orbital accumulation points in $\del\Omega$, is equivalent to several other geometric and topological descriptions, much like in the case of pinched negative curvature: \cite[Th.\,1.3]{CM12} states that for  
$\Omega$ a strictly convex domain with $C^1$ boundary, $\Gamma \leq \Aut(\Omega)$ a discrete subgroup, and $M = \Omega / \Gamma$. The following are equivalent\footnote{The acronyms are from the French: G\'eom\'etriquement Fini, Topologiquement Fini, Partie \'Epaisse du C\oe ur convexe, Partie Non cuspidale du C\oe ur convexe, Volume Fini.}:
\begin{enumerate}
\item[(GF)] $\Gamma \actson \Omega$ is geometrically finite.
\item[(TF)] The quotient $\mathcal{O}_\Gamma / \Gamma$ is an orbifold with boundary, which is the union of a compact space and a finite number of quotients of standard parabolic regions.
\item[(PEC)] The thick part of the convex core of $M$ is compact.
\item[(PNC)] The non-cuspidal part of the convex core of $M$ is compact.
\item[(VF)] The 1-neighborhood of the convex core of $M$ has finite volume, and the group $\Gamma$ is of finite-type.
\end{enumerate}
In particular, such a quotient $M$ is tame (i.e. is the interior of a compact orbifold with boundary) and hence $\Gamma$ is finitely-presented.

If, in addition, $\Omega / \Gamma$ has no cusps---or, equivalently, if the group action $\Gamma \actson \Omega$ is such that every point in the limit set $\Lambda_\Gamma$ is a conical limit point---then the action (or quotient) is {\bf convex co-compact}.

There is an analogue of Benoist's characterization of strict convexity in the finite-volume setting, due to Cooper--Long--Tillmann \cite[Th\,0.15]{CLT},
which states that for 
$M = \Omega / \Gamma$ a properly convex manifold of finite volume which is the interior of a compact manifold $N$ and where the holonomy of each component of $\del N$ is parabolic, the following are equivalent:
\begin{enumerate}[i)]
\item $\Omega$ is strictly convex,
\item $\del\Omega$ is $C^1$,
\item $\pi_1N$ is hyperbolic relative to the subgroups of the boundary components.
\end{enumerate}

Results of a similar flavor may be found in the work of Crampon--Marquis:
\begin{thm}[{\cite[Th.\,1.9]{CM12}}]
If $\Omega \subset \real\proj^n$ is strictly convex with $C^1$ boundary and $\Gamma \leq \Aut(\Omega)$ is a discrete subgroup acting geometrically finitely on $\Omega$, then 
\begin{itemize}
    \item the convex hull $CH(\Lambda_\Gamma)$ with the induced Hilbert metric $d_\Omega$ is Gromov-hyperbolic, and
    \item $\Gamma$ is hyperbolic relative to the maximal parabolic subgroups (i.e. the stabilizers of parabolic points.) 
\end{itemize}
\end{thm}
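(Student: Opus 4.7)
The plan is to establish both conclusions by leveraging the structural decomposition provided by (TF) and (PNC) --- the convex core is the union of a compact piece and finitely many standard parabolic regions --- together with the convergence-group dynamics of $\Gamma$ on $\Lambda_\Gamma$ that comes from strict convexity plus $C^1$ boundary.

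For Gromov hyperbolicity of $CH(\Lambda_\Gamma)$ with the induced Hilbert metric, I would argue via uniformly $\delta$-thin triangles. By (PNC), the non-cuspidal part of the convex core is compact; by (TF), the complement is a finite union of quotients of standard parabolic regions. On the thick part, triangles whose sides stay within a fixed compact subset of $\Omega$ are uniformly $\delta$-thin by a compactness argument: the Hilbert metric on a neighbourhood of any compact piece of $\Omega$ is bi-Lipschitz to the restriction of a Riemannian metric (e.g.\ the Klein metric of an inscribed ellipsoid), which is $\delta$-hyperbolic with a uniform constant on that piece. For geodesic segments that enter parabolic regions I would prove the standard horoball estimate by hand in the Hilbert setting: a geodesic with endpoints distinct from a parabolic fixed point $\xi$ spends only a bounded amount of time in a sufficiently deep horoball at $\xi$. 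This uses the $C^1$ boundary to make sense of Busemann functions and horospheres at $\xi$, and the uniformly bounded parabolic condition to get $\Stab_\Gamma(\xi)$ acting cocompactly on $\mathcal{D}_\xi(\overline{CH(\Lambda_\Gamma \setminus \{\xi\})})$, which provides the compactness needed to promote pointwise estimates to uniform ones. Splicing the thick-part and cusp-part estimates gives a single $\delta$ that works for all geodesic triangles in $CH(\Lambda_\Gamma)$.

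For the relative hyperbolicity of $\Gamma$, I would invoke Yaman's characterization (the converse to Bowditch's theorem): a group acting as a geometrically finite convergence group on a perfect, compact, metrizable space $M$ is hyperbolic relative to the collection of its maximal parabolic subgroups, with Gromov boundary equivariantly homeomorphic to $M$. The natural candidate for $M$ is $\Lambda_\Gamma$, which is compact and metrizable as a subset of $\del\Omega$ and perfect since $\Gamma$ is non-elementary. The convergence-group property follows from the fact that in the strictly convex $C^1$ setting hyperbolic isometries have source-sink dynamics on $\bar\Omega$, so any sequence of distinct elements of $\Gamma$ admits a subsequence with attracting and repelling points in $\Lambda_\Gamma$. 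The geometric-finiteness hypothesis says precisely that every point of $\Lambda_\Gamma$ is either a conical limit point or a uniformly bounded parabolic point, which matches Yaman's dynamical notions; the identification on the conical side uses \cite[Lem.\,5.10]{CM12}, and on the parabolic side follows directly from the definition together with the cocompactness of $\Stab_\Gamma(\xi)$ on the lines through $\xi$.

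The main obstacle is Gromov hyperbolicity of $CH(\Lambda_\Gamma)$, not relative hyperbolicity: once the convergence-group picture is set up, Yaman converts geometric finiteness into relative hyperbolicity essentially formally. The thin-triangle property in the cusps, on the other hand, requires genuine geometric analysis of horospheres in a non-Riemannian, non-CAT($0$) setting, and in particular a quantitative control on excursions of geodesics into parabolic regions that cannot invoke curvature comparison. The uniformly bounded parabolic condition is the critical input; without it, horoballs could fail to be cusp-like in the required metric sense, and the splicing argument in the previous paragraph would break down.
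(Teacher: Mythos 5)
First, a point of order: the paper does not prove this statement; it is quoted verbatim from Crampon--Marquis \cite[Th.\,1.9]{CM12}, so there is no in-paper proof to compare against. Judging your proposal on its own merits: the second half (relative hyperbolicity via Yaman's converse to Bowditch, feeding in the convergence-group structure on $\Lambda_\Gamma$ and matching conical/bounded-parabolic points to the dynamical definitions) is exactly the route Crampon--Marquis take, and is sound.

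The first half has a genuine gap. Your thick-part argument --- ``triangles whose sides stay within a fixed compact subset of $\Omega$ are uniformly $\delta$-thin by a compactness argument'' --- is vacuous: any bounded metric space is $\delta$-hyperbolic with $\delta$ equal to its diameter, and the actual difficulty is that the thick part of $CH(\Lambda_\Gamma)$ upstairs in $\Omega$ is \emph{not} contained in a fixed compact set; it is only cocompact under $\Gamma$, and the triangles one must control have unbounded diameter. Bringing one vertex into a fixed fundamental domain by a group element does not confine the triangle. The essential input here is Benz\'ecri's compactness theorem (the $\PGL(n+1,\real)$-action on pointed properly convex domains is cocompact), which lets one extract a limiting pointed domain from a putative sequence of increasingly fat triangles and derive a contradiction with strict convexity of the limit; this is the engine of Benoist's proof in the divisible case and of its geometrically finite extensions, and your sketch contains no substitute for it. The ``bi-Lipschitz to the Klein metric of an inscribed ellipsoid'' claim does not repair this: bi-Lipschitz control on a compact piece says nothing about thinness of large triangles, and near $\del\Omega$ the comparison fails for merely $C^1$ boundaries. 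Separately, the cusp estimate as you state it --- ``a geodesic with endpoints distinct from $\xi$ spends only a bounded amount of time in a sufficiently deep horoball at $\xi$'' --- is false: a geodesic whose endpoints lie close together on $\del\Omega$ near $\xi$ can penetrate arbitrarily deep into the horoball and hence spend arbitrarily long there (compare the computation $\Leb_\gamma(H(r)) \leq d_\Omega(x,\pi x)+\kappa-2r$ in the proof of Theorem \ref{thm:pcgeod_equidist}(b), where the excursion length grows with $d_\Omega(x,\pi x)$). What one actually needs is that such an excursion stays uniformly close to the concatenation of the two rays from the entry and exit points to $\xi$, i.e.\ thinness of ideal triangles with a vertex at the parabolic point, which again requires a genuine argument rather than a bounded-time claim.
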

We remark here that there is a fair amount of overlap between the geometric results in \cite{CM12} and \cite{CLT}, although there is some mild variation between how they choose to present these results; the reader may, in large part, consult either or both of these sources to taste.

The geodesic flow associated to a geometrically finite convex projective structure also has dynamical properties characteristic of negative curvature, at least under an additional hypothesis on the cusps:
\begin{thm}[{\cite[Th.\,5.2 \& Prop.\,6.1]{CM14}}]  \label{thm:unifhyp_topmix}
If $\Omega$ admits a geometrically finite action by some $\Gamma \leq \Aut(\Omega)$, then the Hilbert geodesic flow on $S \Omega / \Gamma$, restricted to the non-wandering set, is topologically mixing.

Moreover, if this action has with asymptotically hyperbolic cusps, then the flow, still restricted to the non-wandering set, is also uniformly hyperbolic.
\end{thm}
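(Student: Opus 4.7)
The plan is to treat the two conclusions separately, in both cases using $\delta$-hyperbolic / negatively-curved intuition while taking extra care near the cusps, where the Hilbert Finsler structure can degenerate.

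For topological mixing on the non-wandering set, I would first establish topological transitivity and then rule out arithmeticity of the length spectrum. Transitivity follows from density of closed geodesics in the non-wandering set, which in turn follows from a shadowing/closing argument inside $CH(\Lambda_\Gamma)$; the latter is Gromov-hyperbolic under geometric finiteness (as recorded just above), so an Anosov-type closing lemma applies to produce closed orbits near any recurrent one. Given transitivity, a standard dichotomy says that the flow is topologically mixing unless the length spectrum is contained in some $c\ints \subset \real$. To rule out the arithmetic case, observe that translation lengths of hyperbolic elements of $\Gamma$ are differences of logarithms of eigenvalues of lifts to $\SL(\real^{n+1})$; Zariski density of a non-elementary $\Gamma \leq \Aut(\Omega)$ then forces these differences to span a dense subgroup of $\real$.

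For uniform hyperbolicity under the asymptotically hyperbolic cusps hypothesis, I would decompose the non-wandering set into (a) its intersection with the thick part of the convex core, which is compact by (PEC), and (b) its intersections with the finitely many standard parabolic neighborhoods provided by (TF). On the thick part, one extends Benoist's analysis of the divisible case to obtain a pointwise hyperbolic splitting $TS\Omega = \real X \oplus E^s \oplus E^u$ with nonzero Lyapunov rates; continuity of these rates on a compact set upgrades them to uniform contraction/expansion bounds. On each cusp, the asymptotically hyperbolic hypothesis provides, in suitable parabolic coordinates, a uniform comparison of the Hilbert Finsler metric to a hyperbolic cusp of $\HH^n$, and the infinitesimal equations controlling $E^s, E^u$ then inherit uniform contraction/expansion from the hyperbolic model. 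Gluing the two estimates along a flow orbit is possible because orbits meeting the non-wandering set make only bounded-depth excursions into each cusp between returns to the thick part, by the conical/parabolic dichotomy built into the definition of geometric finiteness.

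The hard part is the cusp analysis. Pointwise (non-uniform) hyperbolicity, which holds in general strictly convex Hilbert geometries, does not automatically upgrade to uniform hyperbolicity near $\del\Omega$: the Finsler structure can lose its uniform strict-convexity estimates there, and the stable/unstable rates can in principle collapse to $0$ along escaping orbits. The asymptotically hyperbolic cusps hypothesis is precisely what enforces the right infinitesimal model at bounded parabolic points, making the osculating paraboloids uniformly non-degenerate, which is what the contraction rates of $E^{s/u}$ depend on. Making this quantitative---propagating the uniform hyperbolic comparison through the Jacobi-field-like equations that drive the flow linearization---is the main technical work; once done, the uniform estimates on the thick part and in the cusps glue along orbits of the non-wandering set to give the claimed uniform hyperbolicity.
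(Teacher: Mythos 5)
First, a point of reference: the paper does not prove this statement at all --- it is quoted directly from Crampon--Marquis \cite[Th.\,5.2 \& Prop.\,6.1]{CM14} and used as a black box, so there is no internal proof to compare yours against; your proposal has to stand on its own. Your sketch of the first half is broadly in line with how the result is actually proved (topological transitivity of the flow on the non-wandering set, plus the standard dichotomy reducing topological mixing to non-arithmeticity of the length spectrum, the latter coming from Benoist's density theorem for Jordan projections). Two caveats: you should not invoke an ``Anosov-type closing lemma,'' since hyperbolicity of the flow is precisely what is only available under the extra cusp hypothesis in the \emph{second} half; transitivity should instead come from the convergence-group argument (density of pairs of fixed points of hyperbolic elements in $\Lambda_\Gamma \times \Lambda_\Gamma$ together with coarse stability of quasi-geodesics in the Gromov-hyperbolic convex hull), which needs no hyperbolicity of the flow. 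Also, $\Gamma$ need not be Zariski dense in $\PGL(\real^{n+1})$ (consider $\Omega$ an ellipsoid), so the density argument must be run in the Zariski closure of $\Gamma$ and one must check that the Hilbert translation length, as a linear functional on the Jordan projection, is not degenerate there.

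The second half has a genuine gap. Your gluing step rests on the claim that orbits in the non-wandering set ``make only bounded-depth excursions into each cusp between returns to the thick part.'' This is false: exactly as for finite-volume hyperbolic manifolds, a geodesic both of whose endpoints are conical limit points typically enters the cusps to unbounded depth infinitely often, and the non-wandering set $(\Lambda_\Gamma \times \Lambda_\Gamma \smallsetminus \Delta)\times\real/\Gamma$ contains all such geodesics. The conical/bounded-parabolic dichotomy constrains the limit set, not the depth of cusp excursions. Consequently one cannot reduce the cusp analysis to a bounded neighbourhood of the thick part: the whole point of the asymptotically hyperbolic hypothesis in \cite{CM14} is to obtain contraction and expansion rates that remain uniform \emph{arbitrarily deep} in the cusp, by controlling the shape of $\del\Omega$ at the parabolic points uniformly under the action of the parabolic subgroup. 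Relatedly, your premise that pointwise hyperbolicity ``holds in general strictly convex Hilbert geometries'' is not correct either: Crampon's formulas tie the Lyapunov exponents along an orbit to the regularity of $\del\Omega$ at its endpoints, and for a general strictly convex $C^1$ boundary these exponents can vanish. So the cusp estimate is not a perturbation of an automatic pointwise statement; it is where all the work lies, and your outline does not supply it.
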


For a description of the ``asymptotically hyperbolic'' condition we refer the reader to \cite[\S4.3]{CM14}.
Here the non-wandering set $\mathtt{NW}$ is what Crampon and Marquis call the closed set given by the projection to $S\Omega/\Gamma$ of 
\[ \widetilde{\mathtt{NW}} := (\Lambda_\Gamma \times \Lambda_\Gamma \smallsetminus \Delta) \times \real \subset S\Omega .\]
It is non-wandering in the sense that for 
any open set $U \subset S\Omega/\Gamma$ intersecting $\mathtt{NW}$, and any $T > 0$, 
there exist $s, t > T$ such that $\phi^s U \cap U, \phi^{-t} U \cap U \neq \varnothing$.

The uniform hyperbolicity implies, in particular, that one again has, over $\widetilde{\mathtt{NW}} \subset S\Omega$, a splitting of the tangent bundle into the flow direction and the stable and unstable distributions, with the latter tangent to stable and unstable submanifolds of $S\Omega$ projecting to horospheres in $\Omega$.

Our strongest results below will apply to geometrically finite projective structures; the proofs can be simplified in the particular case of convex co-compact projective structures or divisible Hilbert geometries.

\subsection{Busemann functions and horospheres}

Suppose $\Omega$ is strictly convex with $C^1$ boundary, and $\Gamma \leq \Aut(\Omega)$.

Given $\xi \in \del\Omega$ and $x \in \Omega$, we let $c_{x,\xi}: [0,+\infty) \to \Omega$ denote the geodesic ray starting from $x$ and going towards $\xi$, i.e. $c_{x,\xi}(0) = x$ and $\lim_{t\to+\infty}c_{x,\xi}(t) = \xi$.

The {\bf Busemann function $\beta_\xi$ based at $\xi \in \del\Omega$} is a function $\beta_\xi: \Omega \times \Omega \to \real$ defined by
\[ \beta_\xi(x,y) = t - \lim_{t\to+\infty} d_\Omega(y,c_{x,\xi}(t)) = \lim_{z \to \xi} d_\Omega(x,z) - d_\Omega(y,z) .\]
We remark that this uses the sign convention adopted in \cite{Roblin}, which is a little more intuitive geometrically and helpful for working with shadows (see \S\ref{subsec:shadows}); this is opposite to the general sign convention which appears e.g. in \cite{BridsonHaefliger}.

The existence of these limits follows from the regularity assumptions on $\del\Omega$ (see e.g. \cite[\S2.2]{CM12}); these Busemann functions are $C^1$. It is immediate from the definitions that these Busemann functions satisfy a cocycle condition
\[  \beta_\xi(x, y) + \beta_\xi(y, z) =\beta_\xi(x, z) \]
for all $x,y,z \in \Omega$, and are also $\Gamma$-invariant, in the sense that
\[  \beta_{\gamma\xi}(\gamma x, \gamma y) = \beta_\xi(x,y) \]
for all $\gamma \in \Gamma$ and $x, y \in \Omega$.

The {\bf horosphere based at $\xi \in \del\Omega$} and passing through $x \in \Omega$ is the set
\[ \horosphere_\xi(x) = \{y \in \Omega : \beta_\xi(x,y) = 0\} .\]
The {\bf horoball based at $\xi \in \del\Omega$} and passing through $x \in \Omega$ is the set
\[ H_\xi(x) = \{y \in \Omega : \beta_\xi(x,y) > 0\} .\]

Horoballs are strictly convex open subdomains of $\Omega$; their boundaries are corresponding horospheres, which are $C^1$ submanifolds of $\Omega$.

If $\Gamma$ acts on $\Omega$ geometrically finitely, take $\mathcal{P}$ to be a system of representatives of $\Gamma$-orbits of parabolic fixed points of $\Gamma \actson \bar\Omega$, and, for $p \in \mathcal{P}$, let $\Pi_p$ be the (maximal parabolic) subgroup of $\Gamma$ stabilizing $p$. By geometric finiteness, $\mathcal{P}$ is finite, and for each $p \in \mathcal{P}$ we can find a horoball $H_p$ based at $p$ such that for all $\gamma \in \Gamma$, $\gamma H_p \cap H_p \neq \varnothing$ if and only if $\gamma \in \Pi_p$.

\subsection{Gromov products}

Given $x, y, z \in \Omega$, the {\bf Gromov product} $\grop{x}yz$ is defined as 
\[ \grop{x}yz = \frac 12[ d_\Omega(x,y)+d_\Omega(x,z)-d_\Omega(y,z) ] .\]
It is, roughly speaking, a measure of how much the sides of a geodesic triangle overlap. e.g. for a tree $T$, $\grop{x}yz = 0$ for any $x, y, z \in T$; more generally, the smaller the Gromov product, the thinner the geodesic triangle is. Indeed, there is a characterization of $\delta$-hyperbolicity in terms of the Gromov product.

The Gromov product can be extended to the boundary $\del\Omega$: given $\xi, \eta \in \del\Omega$ and $x \in \Omega$, define
\[ \grop{x}\xi\eta= \lim_{\substack{a_n\to\xi\\b_n\to\eta}} \grop{x}{a_n}{b_n} = \lim_{\substack{a_n\to\xi\\b_n\to\eta}} \frac 12[ d_\Omega(x,a_n)+d_\Omega(x,b_n)-d_\Omega(a_n,b_n) ] .\]
Unlike for a general $\delta$-hyperbolic space, the extension to the boundary in our setting does not require taking a $\sup \liminf$, (where the supremum is taken over all sequences $a_n\to\xi, b_n\to\eta$, see \cite[\S III.H.3.15]{BridsonHaefliger}.) Here, as in the setting of CAT($-1$) spaces, we can dispense with this by computing using geodesic triangles limiting to $\frac23$-ideal triangles with vertices $x, \xi, \eta$ (see \cite[Lem.\,5.2]{Benoist_CHQI}.)

We note the following transformation properties that are useful for proving the equivariance of our Sullivan measures below: for all $\phi \in \Isom(\Omega,d_\Omega)$,
\begin{align*}
\grop{\phi x}{\phi\xi}{\phi\eta} = \grop{x}\xi\eta & = \grop{x'}\xi\eta + \frac 12\left(\beta_\xi(x,x') + \beta_\eta(x,x') \right)
\end{align*}
We remark also that $\beta_\xi(x,u) + \beta_\eta(x,u) = 2 \grop{x}\xi\eta$, for any $u \in (\xi\eta)$. 

The following inequality will be useful below for providing estimates for our Sullivan measure:
\begin{lem} \label{lem:grop_bound}
Let $(\Omega,d_\Omega)$ be a properly convex domain with its Hilbert metric.
If a geodesic segment $(uv)$ intersects $B(x,r)$ in $\Omega$, then $\grop{x}uv \leq r$.
\begin{proof}
Suppose first that $(uv)$ is a finite geodesic segment. Pick $w \in (uv) \cap B(x,r)$. Then
\begin{align*}
d_\Omega(u, x) & \leq d_\Omega(u,w) + d_\Omega(w,x) \leq d_\Omega(u,w) + r \\
d_\Omega(v, x) & \leq d_\Omega(v,w) + r
\end{align*}
and adding the two together we get
\[ \grop{w}{x}{y} = \frac12 (d_\Omega(w,x) + d_\Omega(w,y) - d_\Omega(x, y) ) \leq r .\]
Note that these inequalities continue to hold under limits, and so the lemma continues to hold if the geodesic in question is infinite or bi-infinite.
\end{proof}
\end{lem}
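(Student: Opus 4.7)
The plan is to pick a point on the geodesic segment that lies in $B(x,r)$, apply the triangle inequality from $x$ to both endpoints $u,v$ through this point, and then exploit the fact that on a geodesic the distance from one endpoint to the other equals the sum of the distances through any intermediate point.

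More concretely, first I would handle the case where $u,v \in \Omega$ (so $(uv)$ is a finite geodesic segment). Choose $w \in (uv) \cap B(x,r)$, so that $d_\Omega(x,w) \leq r$. Since $w$ lies on a geodesic joining $u$ to $v$, we have the equality
\[ d_\Omega(u,w) + d_\Omega(w,v) = d_\Omega(u,v), \]
and the triangle inequalities $d_\Omega(x,u) \leq d_\Omega(x,w) + d_\Omega(w,u)$ and $d_\Omega(x,v) \leq d_\Omega(x,w) + d_\Omega(w,v)$. Adding these two and subtracting $d_\Omega(u,v)$ yields
\[ d_\Omega(x,u) + d_\Omega(x,v) - d_\Omega(u,v) \leq 2 d_\Omega(x,w) \leq 2r, \]
so that $\grop{x}{u}{v} \leq r$ by definition of the Gromov product.

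For the case where one or both of $u,v$ lie on $\del\Omega$, I would approximate by sequences $u_n \to u$, $v_n \to v$ in $\Omega$ along the relevant geodesic. For large $n$ the segment $(u_n v_n)$ still intersects $B(x,r)$ (since $(uv)$ does, and these segments converge to $(uv)$), so the finite case gives $\grop{x}{u_n}{v_n} \leq r$. Passing to the limit and using the definition of the boundary-extended Gromov product (which, as noted in the excerpt following \cite[Lem.\,5.2]{Benoist_CHQI}, is a genuine limit rather than a $\sup\liminf$) gives $\grop{x}{u}{v} \leq r$.

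There is no real obstacle here: the argument is just the standard "geodesic through a ball gives small Gromov product" computation, valid in any geodesic metric space. The only point that needs the Hilbert-geometric setting is the clean extension to the boundary, which is available thanks to the strict convexity and $C^1$ regularity assumptions that make the boundary Gromov product behave as a limit.
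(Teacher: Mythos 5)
Your proposal is correct and follows essentially the same route as the paper's proof: pick $w \in (uv) \cap B(x,r)$, apply the triangle inequality to both endpoints, add, and pass to the limit for ideal endpoints. If anything, your write-up is slightly cleaner, since you explicitly invoke $d_\Omega(u,w) + d_\Omega(w,v) = d_\Omega(u,v)$ and avoid the variable-name slip ($\grop{w}{x}{y}$ in place of $\grop{x}{u}{v}$) that appears in the paper's version.
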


\section{Patterson--Sullivan and Sullivan measures}
\label{sec:measures}
\subsection{Conformal densities and Patterson--Sullivan measures} \label{subsec:pat_sul}

Hereafter, suppose $\Omega$ is strictly convex with $C^1$ boundary, and let $\Gamma < \Aut(\Omega)$ be a discrete subgroup acting geometrically finitely on $\Omega$.

A {\bf conformal density of dimension $\delta \geq 0$} on $\Omega$ is a function $\mu$ which associates to each $x \in \Omega$ a positive finite measure $\mu_x$ on $\del \Omega$, satisfying the property that for all $x, x' \in X$, $\mu_{x'}$ is absolutely continuous with respect to $\mu_x$, with Radon-Nikodym derivative given by
\[ \frac{d\mu_{x'}}{d\mu_x}(\xi) = e^{-\delta \beta_\xi(x',x)} \]
where $\beta_\xi$ denotes the Busemann function based at $\xi \in \del\Omega$.

A density $\mu$ is said to be {\bf $\Gamma$-equivariant} if $\gamma_*\mu_x = \mu_{\gamma x}$ for all $\gamma \in \Gamma$ and all $x \in \Omega$.

The {\bf critical exponent} $\delta_\Gamma(\Omega)$ (also written $\delta_\Gamma$, $\delta(\Gamma)$ or just $\delta$ if the context is clear) of a discrete group $\Gamma$ is the critical exponent of the Poincar\'e series $g_{\Gamma,\Omega}(s,x) = g_\Gamma(s,x) := \sum_{\gamma \in \Gamma} e^{-s\cdot d_\Omega(x,\gamma x)}$, i.e. the infimum of all $s$ for which the series converges.
It is straightforward to check, using the triangle inequality, that the convergence of the Poincar\'e series, and hence the critical exponent, is well-defined independent of the choice of basepoint $x$.

The series may or may not converge at $s = \delta(\Gamma)$: if it does not, we say that $\Gamma$ is {\bf of divergence type}.

We can associate to a finitely-generated group $\Gamma$  a $\Gamma$-equivariant conformal density of dimension $\delta(\Gamma)$ as follows: fix a basepoint $o \in \Gamma$. Given $s < \delta(\Gamma)$, define 
\[ \mu_{x,s} := \frac{1}{\sum_{\gamma \in \Gamma} e^{-s \cdot d_\Omega(\gamma \cdot o, x)}} \sum_{\gamma \in \Gamma} e^{-s \cdot d_\Omega(\gamma \cdot o, x)} \dirac_{\gamma \cdot o} \]
if $\Gamma$ is of divergence type, where we use $\dirac_x$ to denote the Dirac delta measure supported at $x$, or more generally
\[ \mu_{x,s} := \frac{1}{\sum_{\gamma \in \Gamma} h(d_\Omega(\gamma \cdot o, x)) e^{-s \cdot d_\Omega(\gamma \cdot o, x)}} \sum_{\gamma \in \Gamma}  h(d_\Omega(\gamma \cdot o, x)) e^{-s \cdot d_\Omega(\gamma \cdot o, x)} \dirac_{\gamma \cdot o} \]
where $h: \real_+ \to \real_+$ is a suitable auxiliary function of subexponential growth, i.e. for any $\eta > 0$, there exists $t_\eta >0$ such that for all $s \in \real_+$ and $t > t_\eta$, $h(s+t) < e^{\eta s} h(t)$ and the modified Poincar\'e series $g'_\Gamma(s,x) = \sum_{\gamma \in \Gamma} h(d_\Omega(\gamma \cdot o, x)) e^{-s \cdot d_\Omega(\gamma \cdot o, x)}$ diverges at $s = \delta_\Gamma$ (for full details, see \cite{Patterson} or \cite{Sullivan}.)

We then take the weak* limit
\[ \mu_x = \wlim\limits_{s \searrow \delta(\Gamma)} \mu_{x,s} .\]

Following the arguments in \cite{Patterson} (see also \cite[Th.\,4.2.1]{crampon_these}), we may check that this limit is well-defined and is supported on $\del\Omega$ (indeed, on the limit set $\Lambda_\Gamma \subset \del\Omega$), that $\mu_{\gamma \cdot x} = \gamma_* \mu_x$, and that $\frac{d\mu_x}{d\mu_y}(\xi) = e^{-\delta(\Gamma) \beta_\xi(x,y)}$, as desired. 

This construction was originally due to Patterson \cite{Patterson} and Sullivan \cite{Sullivan}, and on account of this these conformal densities are known as Patterson--Sullivan densities, and the individual measures $\mu_x$ (for $x \in \del\Omega$) as Patterson--Sullivan measures.

\subsection{Shadows} \label{subsec:shadows}

Given $x, y \in \Omega $ and $r > 0$, define the shadow 
\[ \shadow_r(x,y) := \{\xi \in \del\Omega \st\, ]x\xi) \cap B(y,r) \neq \varnothing \} \]
where $]x\xi)$ denotes the geodesic ray starting from $x$ in the direction of $\xi$. We may also take $x \in \del\Omega$, in which case $]x\xi)$ should be interpreted as the bi-infinite geodesic with endpoints $x$ and $\xi$.
The terminology comes from viewing $\shadow_r(x,y)$ as the shadow cast by the ball $B(y,r)$ on the boundary $\del \Omega$, when we have a light source located at the point $x$.

A key tool for studying these shadows geometrically is the Sullivan shadow lemma, which states, informally, that the distances between orbit points may be approximated by the sizes of suitably chosen shadows. Before presenting the shadow lemma, we note a lemma used in its proof which will also be useful later on:
\begin{lem} \label{lem:roblin_12}
Let $(\Omega, d_\Omega)$ be a properly convex domain with its Hilbert metric.
For all $\xi \in \shadow_r(x,y)$, we have \[ d_\Omega(x,y) - 2r < \beta_\xi(x,y) \leq d_\Omega(x,y) .\]
\begin{proof}
See \cite[Lem.\,1.2]{Roblin}, or \cite[Lem.\,4.7]{Bray} for a version in the context of Hilbert geometry.
\end{proof}
\end{lem}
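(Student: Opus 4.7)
The plan is to prove the two inequalities separately, with the upper bound requiring only the triangle inequality and the lower bound using the shadow hypothesis together with the fact that straight Euclidean line segments are geodesics for $d_\Omega$.

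For the upper bound $\beta_\xi(x,y) \leq d_\Omega(x,y)$, I would apply the triangle inequality $d_\Omega(x,z) - d_\Omega(y,z) \leq d_\Omega(x,y)$ valid for every $z \in \Omega$, and then pass to the limit $z \to \xi$ in the definition of $\beta_\xi$. This half does not use the shadow hypothesis at all, which is consistent with $\beta_\xi(x,y)$ being defined uniformly in $\xi \in \del\Omega$.

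For the lower bound, I would first pick a point $w \in ]x\xi) \cap B(y,r)$, which exists by the hypothesis $\xi \in \shadow_r(x,y)$, so that $d_\Omega(y,w) < r$. Taking a sequence $z_n \to \xi$ along the geodesic ray $]x\xi)$ with $w$ eventually lying between $x$ and $z_n$, additivity of the Hilbert distance along straight geodesic segments gives $d_\Omega(x,z_n) = d_\Omega(x,w) + d_\Omega(w,z_n)$. Combined with the reverse triangle inequalities $d_\Omega(x,w) \geq d_\Omega(x,y) - d_\Omega(y,w)$ and $d_\Omega(w,z_n) \geq d_\Omega(y,z_n) - d_\Omega(y,w)$, this yields
\[ d_\Omega(x,z_n) - d_\Omega(y,z_n) \;\geq\; d_\Omega(x,y) - 2 d_\Omega(y,w) \;>\; d_\Omega(x,y) - 2r. \]
Since the rightmost bound is independent of $n$, taking $n \to \infty$ preserves the strict inequality and produces $\beta_\xi(x,y) > d_\Omega(x,y) - 2r$, as desired.

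I anticipate no real obstacle in this argument. The only delicate point is ensuring that strictness survives the limit, which it does because the intermediate bound $d_\Omega(x,y) - 2 d_\Omega(y,w)$ already provides slack independent of $n$ (using $d_\Omega(y,w) < r$, via the open ball in the shadow definition). The argument goes through regardless of whether $\xi$ is a $C^1$ boundary point, since it never invokes regularity of $\del\Omega$ beyond the existence and additivity of the straight-line geodesic $]x\xi)$.
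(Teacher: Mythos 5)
Your argument is correct: the upper bound is the triangle inequality passed to the limit, and the lower bound correctly exploits additivity of $d_\Omega$ along the ray $]x\xi)$ through a point $w \in B(y,r)$, with strictness surviving the limit because the bound $d_\Omega(x,y) - 2d_\Omega(y,w)$ is uniform in $n$. The paper simply cites \cite[Lem.\,1.2]{Roblin} and \cite[Lem.\,4.7]{Bray} for this, and your proof is essentially the standard argument given there.
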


\begin{lem}[Sullivan shadow lemma] \label{lem:sullivan_shadow}
Let $\Omega$ be a strictly convex domain with $C^1$ boundary, and $\Gamma \leq \Aut(\Omega)$.
Let $\mu$ be a $\Gamma$-equivariant conformal density of dimension $\delta$ and $x \in \Omega$. Then, for all large enough $r > 0$, there exists $C > 0$ such that for all $\gamma \in \Gamma$, 
\[ \frac1C e^{-\delta \cdot d_\Omega (x,\gamma x)} \leq \mu_x\left( \shadow_r(x,\gamma x) \right) \leq Ce^{-\delta \cdot d_\Omega(x,\gamma x)} \]
\begin{proof}
Since $\gamma$ is an isometry and by $\Gamma$-equivariance,
\begin{align*} 
\mu_x(\shadow_r(x,\gamma x)) & = \mu_x(\gamma \shadow_r(\gamma^{-1} x,x)) \\
 & = \mu_{\gamma^{-1} x} (\shadow_r(\gamma^{-1} x, x)) = \int_{\shadow_r(\gamma^{-1} x, x)} e^{-\delta \cdot \beta_\xi(\gamma^{-1} x, x)} \,d\mu_x(\xi) . 
\end{align*}

From Lemma \ref{lem:roblin_12}, we have
\[ e^{-\delta \cdot d_\Omega(x,\gamma x)} \leq e^{-\delta \cdot \beta_\xi(\gamma^{-1} x, x)} \leq  e^{2\delta r} e^{-\delta \cdot d_\Omega(x,\gamma x)}  \]
and hence
\[ e^{-\delta \cdot d_\Omega(x,\gamma x)} \mu_x(\shadow_r(\gamma^{-1}x, x)) \leq \mu_x(\shadow_r(x, \gamma x)) \leq  e^{2\delta r} e^{-\delta \cdot d_\Omega(x,\gamma x)} \mu_x(\shadow_r(\gamma^{-1}x, x)) .\] 
 Since $e^{2\delta r} \mu_x(\shadow_r(\gamma^{-1}x, x)) \leq e^{2\delta r} \mu_x(\del \Omega)$, this gives us the upper bound.

To obtain the lower bound, define the function $d_x: \del\Omega \times \del\Omega \to \real_{\geq 0}$ by $d_x(\xi,\xi') = e^{-\grop{x}{\xi}{\xi'}}$ for $\xi \neq \xi'$ and $d_x(\xi,\xi) = 0$. It is clear that $d_x(\xi,\xi') = d_x(\xi',\xi)$, and $d_x(\xi,\xi') \geq 0$ with equality if and only if $\xi = \xi'$. If we knew $\Omega$ were Gromov-hyperbolic, $d_x$ would essentially be a visual metric on $\del\Omega$; more generally, it remains true that given any $\xi \in \del\Omega$, the sets 
\[ B_{d_x}(\xi,r) := \{\xi' \in \del\Omega : d_x(\xi, \xi') < r\} \] 
are nested, with $B_{d_x}(\xi,r) \searrow \{\xi\}$ as $r \searrow 0$. We call these sets ``$d_x$-balls''.

Given $A \subset \del\Omega$, write $\diam_{d_x}(A) := \sup_{\xi,\xi' \in A} d_x(\xi,\xi')$. As $r \to \infty$, the distance from $x$ to a bi-infinite geodesic joining any two points $\xi, \xi' \in \del \Omega \smallsetminus \shadow_r(y,x)$ goes to infinity, and so $\grop{x}{\xi}{\xi'} \to \infty$ for such $\xi, \xi'$. Hence $\sup_{y\in \Omega} \diam_{d_x}(\del \Omega \smallsetminus \shadow_r(y,x))\to 0$ as $r \to \infty$. 

In particular, we see that once $r$ is large enough, $\shadow_r(\gamma^{-1} x, x)$ contains a $d_x$-ball of $\mu_x$-measure at least half the measure of $\del\Omega$ minus a singleton, so 
\[ \mu_x \left( \shadow_r(\gamma^{-1}x, x) \right) \geq \frac12 \left( \|\mu_x\| - \max_{\xi\in\del X} \mu_x(\xi) \right) \]
for all sufficiently large $r$ independent of $\gamma$. Since $\Gamma$ is not elementary, $\mu_x$ is not an atom, and so this last right-hand side is some positive constant independent of $\gamma$ and $r$, giving us the lower bound.
\end{proof}
\end{lem}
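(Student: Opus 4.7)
The plan is to use the $\Gamma$-equivariance of $\mu$ together with the conformality identity to reduce the problem to controlling two quantities: (a) the Busemann function $\beta_\xi(\gamma^{-1} x, x)$ for $\xi$ in the shadow, which should be comparable to $d_\Omega(x,\gamma x)$, and (b) the $\mu_x$-mass of certain shadows based at varying points, which should be bounded away from zero uniformly in $\gamma$ provided we take $r$ large enough.

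For the initial reduction, I would rewrite
\[ \mu_x\bigl(\shadow_r(x,\gamma x)\bigr) = \mu_x\bigl(\gamma \shadow_r(\gamma^{-1}x,x)\bigr) = \mu_{\gamma^{-1}x}\bigl(\shadow_r(\gamma^{-1}x,x)\bigr), \]
using $\Gamma$-equivariance, and then apply the Radon--Nikodym relation defining the conformal density to convert the latter into an integral of $e^{-\delta \beta_\xi(\gamma^{-1}x,x)}$ against $\mu_x$ over $\shadow_r(\gamma^{-1}x,x)$. Lemma \ref{lem:roblin_12} then pins down the exponent: for $\xi$ in the shadow, $\beta_\xi(\gamma^{-1}x,x)$ is within $2r$ of $d_\Omega(x,\gamma x)$, so I get two-sided bounds of the shape
\[ e^{-\delta d_\Omega(x,\gamma x)}\,\mu_x\bigl(\shadow_r(\gamma^{-1}x,x)\bigr) \leq \mu_x\bigl(\shadow_r(x,\gamma x)\bigr) \leq e^{2\delta r}\,e^{-\delta d_\Omega(x,\gamma x)}\,\mu_x\bigl(\shadow_r(\gamma^{-1}x,x)\bigr). \]
The upper bound is then immediate, since $\mu_x(\shadow_r(\gamma^{-1}x,x)) \leq \|\mu_x\|$ is uniformly bounded.

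The main obstacle is the lower bound, which requires a positive uniform lower bound on $\mu_x(\shadow_r(y,x))$ as $y$ ranges over $\Gamma \cdot x$. My plan is to show that the \emph{complement} of such a shadow in $\del\Omega$ becomes arbitrarily small (for $\mu_x$) as $r \to \infty$, uniformly in $y$. To that end, I would introduce the boundary pseudo-distance $d_x(\xi,\eta) := e^{-\grop{x}{\xi}{\eta}}$, which plays the role of a visual distance; it has the property that its ``balls'' around a point $\xi$ shrink down to $\xi$ as the radius decreases. If a pair $\xi,\eta$ lies outside $\shadow_r(y,x)$, the geodesic joining them stays far from $x$ for large $r$, so by Lemma \ref{lem:grop_bound} (applied contrapositively) $\grop{x}{\xi}{\eta}$ is large, hence $\diam_{d_x}(\del\Omega \smallsetminus \shadow_r(y,x)) \to 0$ uniformly in $y$ as $r \to \infty$.

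Once this uniform shrinking is established, the lower bound follows by non-atomicity: since $\Gamma$ is non-elementary, $\Lambda_\Gamma$ is perfect, and one can deduce that $\mu_x$ is non-atomic (every Patterson--Sullivan measure charges no single point, as any atom on $\del\Omega$ would lead to a contradiction with conformality and the quasi-invariance under $\Gamma$). Therefore, once $r$ is large enough that every complement has $d_x$-diameter smaller than a suitable threshold, I can cover the complement by a single $d_x$-ball of $\mu_x$-measure at most $\tfrac{1}{2}\|\mu_x\|$, yielding $\mu_x(\shadow_r(\gamma^{-1}x,x)) \geq \tfrac{1}{2}\|\mu_x\| > 0$ independent of $\gamma$. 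Feeding this back into the estimate above gives the required lower bound and completes the proof.
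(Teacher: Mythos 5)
Your overall architecture is exactly the paper's: the equivariance-plus-Radon--Nikodym reduction, the use of Lemma \ref{lem:roblin_12} to trap $\beta_\xi(\gamma^{-1}x,x)$ within $2r$ of $d_\Omega(x,\gamma x)$, the trivial upper bound, and the uniform shrinking of $\diam_{d_x}\bigl(\del\Omega \smallsetminus \shadow_r(y,x)\bigr)$ via the Gromov product to get a uniform lower bound on the mass of $\shadow_r(\gamma^{-1}x,x)$. Up to that point everything matches.

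The one genuine problem is the final step, where you invoke full non-atomicity of $\mu_x$, justified by the parenthetical claim that ``any atom on $\del\Omega$ would lead to a contradiction with conformality and quasi-invariance.'' That claim is false at this level of generality: Patterson--Sullivan measures of non-elementary groups \emph{can} have atoms (e.g.\ at parabolic points when the critical-exponent gap fails), and the paper only establishes non-atomicity later (Proposition \ref{prop:noatoms}), under the additional hypothesis of geometric finiteness and --- crucially --- \emph{using the shadow lemma itself} to handle conical limit points. So appealing to non-atomicity here is both unavailable under the lemma's hypotheses and circular within the paper's logical structure. Fortunately you do not need it: all the argument requires is that $\mu_x$ is not a single Dirac mass, which does follow from non-elementarity (a $\Gamma$-equivariant density concentrated at one point would force $\Gamma$ to fix that point). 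With that weaker input, the complement of the shadow, being contained in an arbitrarily small $d_x$-ball, has measure bounded by roughly $\max_{\xi}\mu_x(\{\xi\})$ plus an error, and one obtains the paper's bound $\mu_x\bigl(\shadow_r(\gamma^{-1}x,x)\bigr) \geq \tfrac12\bigl(\|\mu_x\| - \max_{\xi}\mu_x(\{\xi\})\bigr) > 0$ rather than your $\tfrac12\|\mu_x\|$; this constant is still positive and independent of $\gamma$, which is all that is needed. (Both your write-up and the paper's leave implicit the compactness argument needed to make the smallness of $d_x$-balls uniform in the center, so I do not count that against you.)
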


We make one other observation about shadows which will be useful later:
\begin{lem} \label{lem:roblin_1B}
Suppose we have a finitely-generated subgroup $\Gamma < \Aut(\Omega)$ and fixed $x \in \Omega$ and $r>0$ such that the shadow lemma applies. 

There exists $M$ depending only on $\Gamma$, $x$ and $r$ such that for all $t \geq 1$, the family $S_t := \{\shadow_r(x, \gamma x) : \gamma\in\Gamma, t-1 < d_\Omega(x,\gamma x) \leq t \}$ covers some open subset of $\del\Omega$ with multiplicity bounded above by $M$.
\begin{proof}
Given any $\xi \in \del\Omega$, consider the geodesic ray $[x \xi)$. Since $\Gamma$ is finitely-generated, there is a uniform bound $M$ on the number of orbit points $\gamma \cdot x$ within distance $r$ of any unit-length interval of this geodesic ray.
\end{proof} \end{lem}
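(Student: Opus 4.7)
The plan is to bound the pointwise multiplicity directly. Since each shadow $\shadow_r(x, \gamma x)$ is an open subset of $\del\Omega$, the union $\bigcup S_t$ is automatically open, and a uniform bound on the number of shadows in $S_t$ containing any given $\xi \in \del\Omega$ immediately yields the desired multiplicity bound on that open set.

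First I would fix $\xi \in \del\Omega$ and write $c : [0, \infty) \to \Omega$ for the unit-speed geodesic ray from $x$ to $\xi$. The condition $\xi \in \shadow_r(x, \gamma x)$ says exactly that $\gamma x$ lies within Hilbert distance $r$ of $c$; a triangle-inequality argument applied to the nearest point of $c$ to $\gamma x$, combined with $t - 1 < d_\Omega(x, \gamma x) \leq t$, forces $\gamma x$ into the $r$-neighborhood of the segment $c([t-1-r, t+r])$. Since this segment has length $2r + 1$, its $r$-neighborhood is contained in the single Hilbert ball $B(c(t - \tfrac12), 2r + \tfrac12)$.

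The problem thus reduces to bounding, uniformly in the center $y$, the number of orbit points $\gamma x$ in a Hilbert ball $B(y, R)$ of fixed radius $R := 2r + \tfrac12$. This is where the (modest) main obstacle lies. Since $(\Omega, d_\Omega)$ is a proper metric space and $\Gamma$ is discrete, the action is properly discontinuous, so $N := \#\{\gamma \in \Gamma : \gamma x \in B(x, 2R)\}$ is finite. If for some sequence $(y_n)$ the balls $B(y_n, R)$ contained distinct orbit points $\gamma_{n,1} x, \ldots, \gamma_{n, k_n} x$ with $k_n \to \infty$, then for each $n$ the elements $\gamma_{n,1}^{-1} \gamma_{n,j}$ ($j = 1, \ldots, k_n$) would be $k_n$ distinct elements of $\Gamma$ sending $x$ into $B(x, 2R)$, contradicting $k_n > N$ for large $n$. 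The resulting uniform constant $M = M(\Gamma, x, r)$ is independent of $t$ and $\xi$, which gives the claimed bound.
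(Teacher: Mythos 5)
Your proposal is correct and follows essentially the same route as the paper: fix $\xi$, observe that the shadows in $S_t$ containing $\xi$ correspond to orbit points within distance $r$ of a unit-length window of the ray $[x\xi)$, and bound uniformly the number of orbit points in a ball of fixed radius. You supply the details the paper's one-line proof omits, and your appeal to proper discontinuity of the discrete action (rather than the paper's somewhat loose invocation of finite generation) is in fact the more pertinent justification for the uniform bound.
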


\subsection{(Bowen--Margulis--)Sullivan measures}
We may identify $S\Omega$ with $\del^2\Omega \times \real$, where $\del^2\Omega$ denotes the space of distinct pairs of points in $\del\Omega$, and proceed to define a measure on $S\Omega$ by
\[ dm(u) = e^{\delta_\Gamma \cdot (\beta_\xi(x,u) + \beta_\eta(x,u))} d\mu_x(\xi) \, d\mu_x(\eta) \, ds \]
for $u = (\xi,\eta,s) \in S\Omega$ (and $x \in S\Omega$ an arbitrary base-point; one can prove $dm$ is independent of the choice of $x$.)
Here $ds$ denotes (the infinitesimal form of) the Lebesgue measure $\Leb$ on the $\real$ factor; integrating against $ds$ along a single geodesic gives length according to the Hilbert metric $d_\Omega$.

We remark that $\beta_\xi(x,u) + \beta_\eta(x,u) = 2 \grop{x}\xi\eta$, where $\grop{x}\xi\eta$ denotes the Gromov product as above, and so we may also write 
\[ dm(u) = e^{2\delta_\Gamma \langle \xi,\eta \rangle_x} d\mu_x(\xi) \, d\mu_x(\eta) \, ds .\]


It is immediate that this measure is invariant under the flip involution $\flip$ on $S\Omega$; one may also check
that it is $\Gamma$-equivariant, and hence induces a measure on the quotient $S\Omega / \Gamma$. This measure on the quotient space is really associated to the group action $\Gamma \actson \Omega$; below, we will abuse terminology slightly and also refer to this as a measure associated to $\Gamma$. We will write $m_\Gamma$ to denote the measure on the quotient $S\Omega / \Gamma$, and $m$ to denote the measure on $S\Omega$, even though both depend on $\Gamma \actson \Omega$.

We call $m$ the Sullivan measure on $S\Omega$ and $m_\Gamma$ the Sullivan measure on the quotient $S\Omega/\Gamma$, since similarly-defined measures were first studied, in the context of hyperbolic manifolds, by Sullivan \cite{Sullivan}.
For co-compact $\Gamma$, this measure coincides with the Bowen--Margulis measure, which is the unique measure of maximal entropy of a topologically mixing Anosov flow. 
We remark that Roblin in \cite{Roblin} calls these Bowen--Margulis--Sullivan measures.

The following result allows us to assume the finiteness of our Sullivan measures below. We remark that it has previously appeared in Micka\"el Crampon's thesis \cite[Th.\,4.3.1]{crampon_these} but not yet in the published literature; we therefore include a self-contained proof here for completeness, mostly following the arguments in \cite{crampon_these} but with some minor corrections and additions.
\begin{thm} \label{thm:finite_BMmeas}
If $M = \Omega / \Gamma$ is geometrically finite, then the Sullivan measure $m_\Gamma$ on $SM$ is finite.

\begin{proof}
Since the support of the Sullivan measure $m_\Gamma$ outside of the cusp neighborhoods is compact, it suffices to check that the $m_\Gamma$-measure of (the unit tangent bundle over) each cusp neighborhood is finite. 

To obtain estimates in the cusp neighborhoods, it will be useful to have the two lemmas below, the first establishing a gap between the critical exponent $\delta_\Gamma$ and the critical exponent of any parabolic subgroup, and the second showing that the Patterson--Sullivan measures have no atoms:
\begin{lem} \label{lem:critgap_para}
For any nonelementary group $\Gamma \actson \Omega \subset \real\proj^n$ containing a parabolic subgroup $P$ of rank $r$, $\delta_\Gamma > \delta_P = \frac{r}2$.
\begin{proof}

    
By \cite[Lem.\,9.8]{CM14}, $\delta_\Gamma(\Omega) \geq \delta_P(\Omega) = \frac{r}2$, and it suffices to show that the inequality is strict. 
Since $\Gamma$ is nonelementary, we can use a ping-pong argument to find a free product subgroup $\langle h \rangle \times P \leq \Gamma$ where $h \in \Gamma$ is a hyperbolic element. In particular, $\Gamma$ contains all the distinct elements $g = h^{m_1} p_1 \cdots h^{m_k} p_k$ for $k \geq 1$, $n_k \in \ints_{\neq 0}$, $p_i \in P \smallsetminus \{\id\}$. Then we have a lower bound for the Poincar\'e series
\begin{align*}
    g_\Gamma(s,x) & \geq \sum_{k \geq 1} \sum_{\substack{m_1, \dots, m_k \\ p_1,\dots,p_k}} e^{-s \cdot d_\Omega(x, h^{m_1}p_1 \cdots h^{m_k}p_k x)} 
\end{align*}
and applying the triangle inequality
\[ d_\Omega(x, h^{m_1}p_1 \cdots h^{m_k}p_k x) \leq \sum_{i=1}^k d_\Omega(x,h^{m_i}x) + d_\Omega (x,p_i x) \]
to the right-hand side we obtain
\begin{align*}
    g_\Gamma(s,x) & \geq \sum_{k \geq 1} \left( \left( \sum_{n \in \ints \smallsetminus \{0\}} e^{-s \cdot d_\Omega(x, h^n x)} \right) \left( \sum_{p \in P \smallsetminus \{\id\}} e^{-s \cdot d_\Omega(x,px)} \right) \right)^k \\
    & = \sum_{k \geq 1} \left( (g_{\langle h \rangle}(s,x) - 1) (g_P(s,x) - 1) \right)^k
\end{align*}

$g_{\langle h \rangle}(s,x)$ converges for any $s > 0$, and $g_P(s,x)$ converges for any $s > \frac r2$ and diverges at $s = \frac r2$. Hence there exists $s_0 > \frac r2$ such that $(g_{\langle h \rangle}(s_0,x)-1)(g_P(s_0,x)-1) > 1$, so that $g_\Gamma(s_0,x)$ diverges. Then $\delta_\Gamma(\Omega) \geq s_0 > \frac r2$.
\end{proof} \end{lem}

\begin{prop}[\cite{crampon_these}, Proposition 4.3.5] \label{prop:noatoms}
For $\Gamma \actson \Omega$ geometrically finitely, any Patterson--Sullivan measure $\mu_x$ has no atoms. 
\begin{proof}
We can use the shadow lemma (Lemma \ref{lem:sullivan_shadow}) to show that $\mu$ has no atoms on the conical limit set. Given a conical limit point $\xi$, we have a sequence of elements $(\gamma_n^{-1}) \subset \Gamma$, a point $x \in \Omega$ and $r>0$ such that $\gamma_n^{-1} x \to \xi$ and $\gamma_n^{-1} x \in B(x_n,r)$ for some $x_n \in [x\xi)$. Thus $\xi \in \shadow_r(x,\gamma_n^{-1} x)$ for all $n$, and so 
\begin{equation} 
\mu_x(\{\xi\}) \leq \mu_x(\shadow_r(x,\gamma_n^{-1} x)) \leq C_{x,r} e^{-\delta_\Gamma d_\Omega(x, \gamma_n^{-1} x)} . 
\label{eqn:conical_shadow_approx} \end{equation}
Since $\gamma_n^{-1} \to \infty$ as $n \to \infty$ and $\delta_\Gamma > 0$, $e^{-\delta_\Gamma d_\Omega(x, \gamma_n^{-1} x)} \to 0$ as $n \to \infty$. Hence $\xi$ cannot be an atom.

It then remains to show that the measure of any of the countable number of bounded parabolic points is zero.

Let $\xi_P$ be a parabolic point and let $P$ be its stabilizer. 

We have $\mu_x(\{\xi_P\}) \leq \mu_x(V) \leq \liminf_{s \searrow \delta_\Gamma} \mu_{x,s}(V)$ for any open set $V \subset \overline\Omega$ containing $\xi$; hence it suffices to find a family of such sets $(V_n)_{n\in\nats}$ such that $\liminf_{s\searrow\delta_\Gamma} \mu_{x,s}(V_n) \to 0$ as $n \to \infty$.

Choose a convex and locally-finite open fundamental domain $\mathcal{F} \subset \Omega$ for the action of $P$ on $\Omega$ containing our basepoint $x$; such a fundamental domain exists by Theorem \ref{thm:cvx_lf_fundoms}.
Choose also a horoball $H_P$ based at $\xi_P$ that does not contain any point of the orbit $\Gamma \cdot x$.
Fix a word metric $|\cdot|$ on $P$ associated to a symmetric finite generating set $S = S^{-1}$, and let $U_n = H_P \cup \bigcup_{p\in P, |p|\geq n} p \mathcal{F}$ and $V_n = \mathrm{int}(\overline{U_n})$, where both the closure and interior are taken in $\bar\Omega$. Note $U_n \subset V_n \subset \overline{U_n}$.
We have
\[ \mu_{x,s}(V_n) \leq \frac{1}{g'_\Gamma(s,x)} \sum_{\substack{p\in P\\|p| \geq n}} \sum_{\gamma \in \Gamma'} h(d_\Omega(x,p\gamma x)) e^{-s\cdot d_\Omega(x, p \gamma x)} \]
where $\Gamma' := \{g \in \Gamma : gx \in \overline{\mathcal{F}}\}$. Below, we will show that the contribution from each fundamental domain is on the order of $e^{-s |p|}$. This would be consistent with the results of a direct geometric computation in the case of $\Gamma = P$; in the general case,  we can control the added contributions from $\Gamma'$ and from the auxiliary function $h(\cdot)$, so that each summand in the first sum remains on the order of $e^{-s \cdot d_\Omega(x,px)}$. 

More precisely, we will show that $d_\Omega(x,p\gamma x)$ is roughly $d_\Omega(x,\gamma x) + d_\Omega(x,px)$, with bounded error controlled by a Gromov product.
By the definition of the Gromov product,
\[ d_\Omega(x, p \gamma x) = d_\Omega(x, \gamma x) + d_\Omega(x,p x) - 2 \grop{x}{\gamma x}{p^{-1} x} .\]
To bound the $\grop{x}{\gamma x}{p^{-1}x}$ term, we observe that for each $\gamma \in \Gamma'$, once $r$ and $n$ are large enough we have $V_n \subset \bigcup_{\xi \in \shadow_r(\gamma x,x)} [\gamma x, \xi)$. We can improve this to a uniform choice of $r >0$ and $n_0$ such that for all $n > n_0$,
\begin{equation} 
V_n \subset \bigcap_{\gamma \in \Gamma'}\, \bigcup_{\xi \in \shadow_r(\gamma x, x)} [\gamma x, \xi) \label{eqn:Vn_inclus}
\end{equation}
by the following argument:
by geometric finiteness, $\beta_{\xi_P}(x,y) > 0$ for all but finitely many points $y \in \Gamma' \cdot x$, i.e. all but finitely many of these points are further away from $\xi_P$ than $x$, for there is some horoball based at $\xi_P$ which does not contain any points in the orbit $\Gamma \cdot x$. Let $r_1 = \max_F d_\Omega(\gamma x, x)$ where $F$ denotes the set of $\gamma \in \Gamma'$ such that $\gamma x$ is no further from $\xi_P$ than $x$. Moreover, $\overline{\mathcal{F}} \cap \del\Omega \smallsetminus \{\xi_P\}$ and $\overline{V_n} \cap \del\Omega$ (for any $n$) are compact, so for any $n$ such that the two sets are disjoint there exists $r_2 >0$ such that any geodesic between the two intersects $B(x,r_2)$.
Let $r := \max\{2r_1,r_2\}$. 

Then $\shadow_r(\gamma x, x) \supset \shadow_{2r_1}(\gamma x, x) = \del\Omega$ for all $\gamma \in F$. For any $\gamma \in \Gamma' \smallsetminus F$, extend the geodesic ray $[\gamma x, \xi_P)$ to a bi-infinite geodesic and let $\zeta_\gamma$ denote its backwards endpoint (i.e. not $\xi_P$.) Then $\zeta_\gamma \in \overline{\mathcal{F}} \cap \del\Omega \smallsetminus \{\xi_P\}$, by our choice of $r_2$ this geodesic intersects $B(x,r) \supset B(x,r_2)$, and $\shadow_r(\zeta_\gamma,x) \subset \shadow_r(\gamma x,x)$. Hence, writing $\cap \shadow := \bigcap_{\zeta \in \overline{\mathcal{F}} \cap \del\Omega \smallsetminus \{\xi_P\}} \shadow_{r}(\zeta, x)$, we have
\[  \bigcap_{\gamma \in \Gamma'}\, \bigcup_{\xi \in \shadow_r(\gamma x, x)} [\gamma x, \xi) \supset \bigcap_{\gamma \in \Gamma' \smallsetminus F}\, \bigcup_{\xi \in \shadow_r(\zeta_\gamma, x)} [\gamma x, \xi) \supset
\bigcap_{\gamma\in\Gamma'\smallsetminus F} \bigcup_{\xi \in \bigcap\shadow} [\gamma x,\xi) \supset V_n \]
where the last inclusion holds for all sufficiently large $n$.

This implies $\grop{x}{\gamma x}{p^{-1} x} \leq r$, by the following argument together with Lemma \ref{lem:grop_bound}: by the definition of $V_n$, $p^{-1} x \in V_n$ once $|p| \geq n$; by (\ref{eqn:Vn_inclus}), this implies that the geodesic ray extending $[\gamma x, p^{-1} x)$ intersects $B(x,r)$. Since our fundamental domain $\mathcal{F}$ is convex, this geodesic ray cannot return to $\mathcal{F} \ni x$ after entering $p^{-1}\mathcal{F} \ni p^{-1} x$, and hence $[\gamma x, p^{-1} x) \cap B(x,r) \neq \varnothing$.

By this upper bound on the Gromov product, and because $h$ is an increasing function, we have
\[ \mu_{x,s}(V_n) \leq \frac{e^{2sr}}{g'_\Gamma(s,x)} \sum_{|p| \geq n} e^{-s d_\Omega(x, p x)} \sum_{\gamma\in\Gamma'} h\left(d_\Omega(x,p x) + d_\Omega(x,\gamma x)\right) e^{-s d_\Omega(x,\gamma x)} .\]

Let $\eps > 0$ and $t_\eps > 0$ be such that $\delta_\Gamma - \eps > \frac\rho2$ where $\rho$ is the highest rank of a parabolic subgroup of $\Gamma$, and for $t > t_\eps$, $h(s+t) \leq e^{\eps s} h(t)$. The number of $\gamma \in \Gamma'$ such that $d(x,\gamma x) \leq t_\eps$ is finite; let $K$ be the set of such elements, and write $L := \Gamma' \smallsetminus K$. Then
\begin{align*}
\mu_{x,s}(V_n) \leq \frac{e^{2sr}}{g'_\Gamma(s,x)} \sum_{|p|\geq n} e^{-s d_\Omega(x, p x)} \left( \Sigma_K + \Sigma_L \right)
\end{align*}
where 
\begin{align*}
\Sigma_K & := \sum_{\gamma\in K} h\left(d_\Omega(x,p x) + d_\Omega(x,\gamma x)\right) e^{-s d_\Omega(x,\gamma x)} , \\
\Sigma_L & := \sum_{\gamma\in L} h(d_\Omega(x,\gamma x)) e^{\eps d_\Omega(x, p x)} e^{-s d_\Omega(x,\gamma x)} .
\end{align*}
Because $K$ is finite, $\Sigma_K$ is bounded above independent of $s$, say by a constant $C_K$. Thus $\mu_{x,s}(V_n)$ is bounded above by
\begin{align*}
\frac{e^{2sr}}{g'_\Gamma(s,x)} \left( C_K \sum_{|p|\geq n} e^{-s d_\Omega(x, p x)} + \sum_{|p|\geq n} e^{(\eta-s) d_\Omega(x, p x)} \sum_{\gamma\in L} h(d_\Omega(x,\gamma x)) e^{-s d_\Omega(x,\gamma x)} \right)
\end{align*}
As $s \searrow \delta_\Gamma =: \delta$, $g'_\Gamma(s,x) \to \infty$ and $\sum_{p\in P} e^{-s d_\Omega(x, p x)} < \infty$. The first term thus vanishes in the limit, and we see
\begin{align*}
\mu_x(V_n) & \leq \frac{e^{2\delta r}}{g'_\Gamma(\delta,x)} \cdot \sum_{|p|\geq n} e^{-(\delta-\eta) d_\Omega(x, p x)} \sum_{\gamma\in L} h(d_\Omega(x,\gamma x)) e^{-\delta d_\Omega(x,\gamma x)}  \\
 & \leq e^{2\delta r} \mu_x(\del\Omega) \sum_{|p|\geq n} e^{-(\delta_\Gamma-\eps) d_\Omega(x, p x)}
\end{align*}
Because $\delta_\Gamma - \eps > \frac\rho2$, the series $\sum_{p \in P} e^{-(-\delta_\Gamma-\eps)d_\Omega(x,p x)}$ converges. Thus as $n \to \infty$ the right-hand side in the last inequality goes to 0, so $\mu_x(\{\xi_P\}) = 0$ as desired.
\end{proof} \end{prop}

We pause to record a corollary of the  proof which will be useful further ahead:
\begin{corn} \label{cor:conical_fullmeas}
For $\Gamma \actson \Omega$ geometrically finitely, the conical limit set has full $\mu_x$-measure for any Patterson--Sullivan measure $\mu_x$.
\end{corn}

We now use these lemmas to give a finite upper bound on the Sullivan measure of a cusp neighborhood. 
Let $\xi_P$ be a parabolic point for $\Gamma \actson \Omega$ and $P$ be its stabilizer. Let $H \subset \Omega$ be a horoball based at $\xi_P$ and $\mathcal{F}$ be a fundamental domain for $P \actson \bar\Omega$. Note $\mathcal{D} := \del \mathcal{F} \cap \Lambda_\Gamma \smallsetminus \{\xi_P\}$ is a compact fundamental domain for $P \actson \Lambda_\Gamma \smallsetminus \{\xi_P\}$. We will show that $m(S(\mathcal{F}\cap H))$ is finite. Since $m$ has no atom at $\xi_P$, this suffices to show that $m_\Gamma(SH / \Gamma) < \infty$.

From our definition of the Sullivan measure, and writing $l_\Omega$ to denote length according to the Hilbert metric, we have
\begin{align*}
m(S(\mathcal{F} \cap H)) & = \int_{\del^2\Omega} l_\Omega((\xi^- \xi^+) \cap S(\mathcal{F} \cap H)) e^{2 \delta_\Gamma \grop{x}{\xi^+}{\xi^-}} \,d\mu_x(\xi^-) \,d\mu_x(\xi^+) 
\end{align*}
We now break the right-hand side into a sum whose summands correspond to pairs of limit boundary points in different copies of $\mathcal{D}$; geometrically, this corresponds, roughly, to the different combinatorial patterns according to which bi-infinite geodesics may enter the cusp neighborhood corresponding to $P$ in the quotient: 
\begin{align*}
m(S(\mathcal{F} \cap H))  & = \sum_{p,q \in P} \int_{q\mathcal{D} \times p\mathcal{D}} l_\Omega((\xi^- \xi^+) \cap S(\mathcal{F} \cap H)) e^{2 \delta_\Gamma \grop{x}{\xi^+}{\xi^-}} \,d\mu_x(\xi^-) \,d\mu_x(\xi^+) \\
 & = \sum_{p\in P} \int_{\mathcal{D} \times p\mathcal{D}} l_\Omega((\xi^- \xi^+) \cap SH) \cdot e^{2 \delta_\Gamma \grop{x}{\xi^+}{\xi^-}} \,d\mu_x(\xi^-) \,d\mu_x(\xi^+)
\end{align*}
where the last equality follows from summing over copies of the fundamental domain $\mathcal{F}$. We can now use the shadow lemma, in essence, to control the terms in this sum:

Since $\mathcal{D}$ is compact, there exists $r > 0$ such that any geodesic $(\xi^-\xi^+)$ with endpoints in $\mathcal{D}$ and $p\mathcal{D}$ and which intersects $H$ must intersect $B(x,r)$ and $B(px,r)$. In fact, since $P$ acts by isometries, we can pick a uniform such $r > 0$ over all $p \in P$.
Without loss of generality, since elements of $P$ preserve the distance to $H$, we may assume $H \cap B(x,r) \neq \varnothing$ and $H \cap B(px,r) \neq \varnothing$, and hence we conclude
\begin{enumerate}[(i)]
\item $l_\Omega((\xi^-\xi^+) \cap SH) \leq d_\Omega(x,px) + 2r$; \item $p\mathcal{D} \subset \shadow_r(x,px)$ and hence, by the shadow lemma, 
\[ \mu_x(p\mathcal{D}) \leq C_x e^{-\delta_\Gamma d_\Omega(x,px)} ;\]
\item $\grop{x}{\xi^-}{\xi^+} \leq r$ by Lemma \ref{lem:grop_bound}.
\end{enumerate}
Hence, altogether now, we have
\begin{align*}
m(S(\mathcal{F} \cap H)) & \leq \sum_{p\in P} (d_\Omega (x,px) + 2r) e^{2 \delta_\Gamma r} \mu_x(\mathcal{D}) \cdot C_x e^{-\delta_\Gamma d_\Omega(x,px)} \\
 & = e^{2\delta_\Gamma r} C_x \sum_{p\in P} (d_\Omega (x,px) + 2r) e^{-\delta_\Gamma d_\Omega(x,px)}
\end{align*}
Since $\delta_\Gamma > \delta_P$ from Lemma \ref{lem:critgap_para}, this series converges, and hence we have the desired finite upper bound for $m(S(\mathcal{F} \cap H))$. This concludes the proof of Theorem \ref{thm:finite_BMmeas}.
\end{proof}
\end{thm}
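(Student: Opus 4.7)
The plan is to exploit the geometric finiteness of $\Gamma \actson \Omega$ to reduce the question to estimating the Sullivan measure inside each cusp neighborhood. By the equivalence (PNC) in Crampon--Marquis, the non-cuspidal part of the convex core of $M$ is compact and there are only finitely many $\Gamma$-orbits of cusps, so $m_\Gamma$ is automatically finite outside a disjoint finite union of cusp neighborhoods. Thus it suffices to fix a parabolic fixed point $\xi_P \in \Lambda_\Gamma$ with stabilizer $P := \Stab_\Gamma(\xi_P)$ of rank $r$, a horoball $H$ based at $\xi_P$ disjoint from the orbit $\Gamma \cdot x$, and a convex locally-finite fundamental domain $\mathcal{F} \ni x$ for $P \actson \Omega$ (Theorem \ref{thm:cvx_lf_fundoms}), and to bound $m(S(\mathcal{F} \cap H))$.

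By uniform boundedness of $\xi_P$, the set $\mathcal{D} := \overline{\mathcal{F}} \cap (\Lambda_\Gamma \setminus \{\xi_P\})$ is a compact fundamental domain for $P \actson \Lambda_\Gamma \setminus \{\xi_P\}$. Granting (as must be checked separately) that $\mu_x(\{\xi_P\}) = 0$, geodesics with an endpoint at $\xi_P$ form an $m$-null set, and one can partition the relevant part of $\partial^2\Omega$ into the $P \times P$-translates of $\mathcal{D} \times \mathcal{D}$. Using $P$-invariance of the integrand to collapse one factor, this gives
\[ m(S(\mathcal{F} \cap H)) = \sum_{p \in P} \int_{\mathcal{D} \times p\mathcal{D}} \ell_\Omega\bigl((\xi^-\xi^+) \cap SH\bigr) \, e^{2\delta_\Gamma \grop{x}{\xi^+}{\xi^-}} \, d\mu_x(\xi^-)\, d\mu_x(\xi^+) . \]
The third step controls each summand via the shadow lemma. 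By compactness of $\mathcal{D}$ and because $P$ acts by isometries, there is a uniform $r_0 > 0$ such that every bi-infinite geodesic from $\mathcal{D}$ to $p\mathcal{D}$ meeting $H$ passes within distance $r_0$ of both $x$ and $px$. This yields simultaneously: (i) the Hilbert length of the intersection with $SH$ is at most $d_\Omega(x,px) + 2r_0$; (ii) $\grop{x}{\xi^-}{\xi^+} \leq r_0$ by Lemma \ref{lem:grop_bound}; and (iii) $p\mathcal{D} \subset \shadow_{r_0}(x,px)$, whence $\mu_x(p\mathcal{D}) \leq C e^{-\delta_\Gamma d_\Omega(x,px)}$ by Lemma \ref{lem:sullivan_shadow}. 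Combining these gives
\[ m(S(\mathcal{F} \cap H)) \leq C' \sum_{p \in P} \bigl(d_\Omega(x,px) + 2r_0\bigr) e^{-\delta_\Gamma d_\Omega(x,px)} . \]

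The main obstacle is to prove the critical exponent gap $\delta_\Gamma > \delta_P = r/2$, which is exactly what forces the last series to converge (parabolic orbit-counts grow like $e^{rt/2}$ up to polynomial factors, so only an exponential margin in the decay rate will do). My approach is a ping-pong construction: using non-elementariness of $\Gamma$, pick a hyperbolic $h \in \Gamma$ with fixed points disjoint from $\xi_P$ and, after raising to a high power, obtain a free product $\langle h \rangle * P \leq \Gamma$. Bounding alternating-word translation lengths via the triangle inequality yields $g_\Gamma(s,x) \geq \sum_{k \geq 1} \bigl((g_{\langle h\rangle}(s,x)-1)(g_P(s,x)-1)\bigr)^k$; since $g_{\langle h\rangle}$ converges at every positive $s$ while $g_P$ diverges at $s = r/2$, there exists $s_0 > r/2$ at which $g_\Gamma$ diverges, giving the gap. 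Essentially the same gap also yields the auxiliary input $\mu_x(\{\xi_P\}) = 0$ by approximating $\{\xi_P\}$ from above by shrinking neighborhoods built from translates of $\mathcal{F}$ and bounding their $\mu_{x,s}$-masses via the shadow lemma along the parabolic orbit.
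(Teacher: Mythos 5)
Your proposal is correct and follows essentially the same route as the paper: reduction to the cusps, the decomposition of $m(S(\mathcal{F}\cap H))$ over $P$-translates of a compact fundamental domain $\mathcal{D}$ for $P \actson \Lambda_\Gamma \smallsetminus \{\xi_P\}$, the three shadow-lemma estimates (i)--(iii), and the critical exponent gap $\delta_\Gamma > \delta_P = r/2$ via the same ping-pong/free-product lower bound on the Poincar\'e series. The only small imprecision is in your sketch of $\mu_x(\{\xi_P\})=0$: the paper's argument for parabolic points does use shrinking neighborhoods $V_n$ built from translates of $\mathcal{F}$ and the pre-limit measures $\mu_{x,s}$, but the key bound there comes from a Gromov-product estimate together with the subexponential auxiliary function $h$ rather than from the shadow lemma, which is only used for the conical part.
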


As a consequence of the proof above, we have:
\begin{prop} \label{prop:divergence}
If $\Gamma \actson \Omega$ geometrically finitely, then $\Gamma$ is of divergence type.
\begin{proof}
Given any conical limit point $\xi$, we can find a sequence of increasingly smaller shadows containing $\xi$, as described at the beginning of the proof of Proposition \ref{prop:noatoms}. Hence given any positive integer $n$ and any sufficiently large $r>0$, the conical limit set can be covered by shadows of the form $\shadow_r(x,\gamma x)$ with $|\gamma| \geq n$, which by the shadow lemma have $\mu_x$-measure bounded above by $C_{x,r} e^{-\delta_\Gamma d_\Omega(x,\gamma x)}$.

Hence, fixing $r>0$, the measure of the conical limit set is bounded above by a constant multiple of 
\[ C_x \sum_{\gamma\in\Gamma, |\gamma| \geq n} e^{-\delta_\Gamma d_\Omega(x,\gamma x)} \]
which is a tail of the (unmodified) Poincar\'e series $g_\Gamma(x,\delta_\Gamma)$.

If $\Gamma$ were of convergence type, these tails of the Poincar\'e series would go to zero as $n \to \infty$, and so taking that limit we find that the conical limit set will have zero measure in this case. 
This contradicts Corollary \ref{cor:conical_fullmeas}, which tells us that the conical limit set has full measure. 
Hence $\Gamma$ must be of divergence type.
\end{proof}
\end{prop}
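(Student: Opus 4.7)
The plan is to derive divergence from two facts established earlier: the Sullivan shadow lemma (Lemma \ref{lem:sullivan_shadow}), which gives the upper bound $\mu_x(\shadow_r(x,\gamma x)) \leq C e^{-\delta_\Gamma d_\Omega(x,\gamma x)}$, and Corollary \ref{cor:conical_fullmeas}, which says the conical limit set $\Lambda_\Gamma^{\mathrm{con}}$ has full $\mu_x$-measure. The strategy is to use the first fact to bound the $\mu_x$-measure of the conical limit set by a tail of the Poincaré series, and then use the second fact to argue that such tails cannot go to zero.

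More precisely, I would first observe that by the definition of a conical limit point, each $\xi \in \Lambda_\Gamma^{\mathrm{con}}$ lies in infinitely many shadows $\shadow_r(x,\gamma x)$ (for fixed sufficiently large $r$): indeed the conical approach sequence $(\gamma_n^{-1} x)$ stays within distance $r$ of the ray $[x\xi)$, so $\xi \in \shadow_r(x,\gamma_n^{-1} x)$ for all $n$. Consequently, for any $N \in \nats$, the family $\{\shadow_r(x,\gamma x) : \gamma \in \Gamma,\ d_\Omega(x,\gamma x) \geq N\}$ covers $\Lambda_\Gamma^{\mathrm{con}}$.

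Applying the shadow lemma termwise to this cover gives
\[ 0 < \mu_x(\Lambda_\Gamma^{\mathrm{con}}) \leq \sum_{\substack{\gamma \in \Gamma \\ d_\Omega(x,\gamma x) \geq N}} \mu_x(\shadow_r(x,\gamma x)) \leq C \sum_{\substack{\gamma \in \Gamma \\ d_\Omega(x,\gamma x) \geq N}} e^{-\delta_\Gamma d_\Omega(x,\gamma x)}, \]
where the strictly positive lower bound comes from full measure of the conical limit set (Corollary \ref{cor:conical_fullmeas}) together with non-atomicity (Proposition \ref{prop:noatoms}) guaranteeing $\|\mu_x\| > 0$. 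The right-hand side is precisely the $N$-tail of the Poincaré series $g_\Gamma(\delta_\Gamma, x)$. If $\Gamma$ were of convergence type, these tails would tend to $0$ as $N \to \infty$, forcing $\mu_x(\Lambda_\Gamma^{\mathrm{con}}) = 0$, a contradiction. Hence $g_\Gamma(\delta_\Gamma, x) = \infty$, i.e. $\Gamma$ is of divergence type.

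I do not anticipate a serious obstacle here: the argument is essentially a one-line application of the shadow lemma on a cover whose existence is immediate from unpacking the definition of a conical limit point. The only subtlety worth double-checking is the uniformity of $r$ for the shadow lemma, but since we only need a single $r$ large enough for Lemma \ref{lem:sullivan_shadow} and for the conical cover simultaneously — and conical approach sequences use an arbitrary but fixed constant — this is unproblematic.
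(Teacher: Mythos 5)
Your proposal is correct and follows essentially the same route as the paper: cover the conical limit set by shadows indexed by orbit points far from $x$, bound the measure of the cover by a tail of the Poincar\'e series via the shadow lemma, and contradict the full measure of the conical limit set (Corollary \ref{cor:conical_fullmeas}) if those tails vanished. The only cosmetic difference is that you index the tails by $d_\Omega(x,\gamma x) \geq N$ where the paper uses word length $|\gamma| \geq n$; both exhaust $\Gamma$ and yield the same conclusion.
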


This further allows us to establish ergodicity of the Hilbert geodesic flow with respect to our Sullivan measures:
\begin{defn}
Given a Borel probability space $(X,\nu)$, a flow $(g^t)_{t\in\real}$ (or $\Gamma$-action) is said to be {\bf ergodic} with respect to $\nu$ if every flow-invariant ($\Gamma$-invariant, respectively) measurable function $f: X \to \real$ is constant $\nu$-almost everywhere.
\end{defn}
Ergodicity may be viewed as a weaker form of mixing;
below, it will be useful for establishing mixing.
\begin{prop} \label{prop:ergodic}
$(S\Omega/\Gamma, (g_\Gamma^t)_{t\in\real}, m_\Gamma)$ and $(\del^2 \Omega, \Gamma, \mu_o \otimes \mu_o) $ are ergodic.
\begin{proof}
Recall $S\Omega = \del^2\Omega \times \real$.
There is a bijection between $\Gamma$-invariant subsets $A \subset \del^2\Omega$ and flow-invariant subsets $B \subset S\Omega / \Gamma$ given by $A \mapsto (A \times \real) / \Gamma$.
Moreover, since the measure $m = \mu_o \otimes \mu_o \otimes \mathcal{L}$ on $S\Omega$ descends to $m_\Gamma$ on the quotient $S\Omega/\Gamma$, the bijection sends sets of zero (or full) $(\mu_o \otimes \mu_o)$-measure to sets of zero (or full, respectively) $m_\Gamma$-measure.
Hence ergodicity of either one of these systems is equivalent to ergodicity of the other.

We claim that since the conical limit set $\Lambda_\Gamma^c$ has full $\mu_o$-measure (Corollary \ref{cor:conical_fullmeas}), we may deduce that $(S\Omega/\Gamma, (g_\Gamma^t)_{t\in\real}, m_\Gamma)$ is completely conservative, i.e. $S\Omega/\Gamma$ has no wandering sets of positive measure. We recall that a Borel set $U \subset S\Omega / \Gamma$ of positive measure is said to be wandering if $\int \mathbf{1}_U (g_\Gamma^t v) \,dt$ is finite for $m_\Gamma$-almost every $v \in U$. 
By the Hopf decomposition \cite[Th.\,3.2]{Krengel}, it suffices to show that any bounded positive-measure subset $V \subset S\Omega / \Gamma$ is contained in some compact set $K$ such that $\int \mathbf{1}_K (g_\Gamma^t v) \,dt = \infty$ for all $v \in V$: since $K$ is compact, it cannot contain the orbit of a positive-measure wandering set, and so $m_\Gamma$-almost every $v \in V$ must belong to the conservative part. Since this is true for any bounded positive-measure subset $V$, the dissipative part has zero measure, i.e. there are no wandering sets of positive measure, as desired.

To establish the claim, we write $S^c\Omega := (\Lambda_\Gamma^c \times \Lambda_\Gamma^c \smallsetminus \Delta) \times \real$ and note that given any $v \in S^c\Omega / \Gamma$, 
with $x \in \Omega/\Gamma$ its foot-point and $\ell_v = (v^- v^+)$ the bi-infinite geodesic in $S\Omega$ tangent to (the lift of) $v$, there exists $R_x > 0$ such that infinitely many points in the orbit $\Gamma \cdot x$ lie within distance $R_x$ of $\ell_v$ in $S\Omega$. In fact, the diameter of the compact core of $S\Omega/\Gamma$ provides an upper bound $R \geq R_x$ independent of $x \in \Omega/\Gamma$. Then, writing $B = S\bar B(x,R) \subset S\Omega/\Gamma$ we have $\int_{B} \mathbf{1}_{B}(g_\Gamma^t v) \,dt = \infty$.

Given a bounded positive-measure subset $V \subset S^c\Omega/\Gamma$ for all $x \in V$, we can find a single compact set $K \supset V$---a closure of a union $K = \overline{\bigcup_{x \in V} S\bar B(x,R)}$ of the balls just described---such that $\int \mathbf{1}_{K}(g_\Gamma^t v) \,dt = \infty$ for all $v \in V$. Since $S^c\Omega / \Gamma$ has full $m_\Gamma$-measure, we have established the desired claim.

The ergodicity of $(S\Omega/\Gamma, (g_\Gamma^t)_{t\in\real}, m_\Gamma)$ then follows from the Hopf argument, which states that for a completely conservative flow, any flow-invariant function is also invariant along the stable and unstable distributions. Since the stable and unstable distributions are tangent to globally-defined transverse horospheres in our setting, it follows from standard arguments, including the absolute continuity of the stable and unstable distributions, that a function which is invariant under the flow and also the stable and unstable distributions must be (almost everywhere) constant. For a more detailed version of this argument in the context of Hilbert geometry, see \cite[Th.\,6.7]{Bray}. 
\end{proof} \end{prop}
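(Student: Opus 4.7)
The plan is to follow the classical Hopf argument, adapted to this Hilbert-geometric setting, and leverage the fact that $\Gamma$ is of divergence type together with Corollary \ref{cor:conical_fullmeas}.

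First I would reduce the two ergodicity statements to one another. Using the identification $S\Omega = \del^2\Omega \times \real$ (Hopf coordinates) and the product form $dm = e^{2\delta \grop{x}{\xi}{\eta}} \,d\mu_x(\xi)\,d\mu_x(\eta)\,ds$, any $\Gamma$-invariant Borel $A \subset \del^2\Omega$ corresponds to the flow-invariant set $(A \times \real)/\Gamma \subset S\Omega/\Gamma$, and the Radon--Nikodym factor $e^{2\delta \grop{x}{\cdot}{\cdot}}$ is everywhere positive and finite on $\del^2\Omega$, so this correspondence preserves null sets. Hence it is enough to prove ergodicity of the geodesic flow $(g_\Gamma^t, m_\Gamma)$.

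Next I would establish complete conservativity of $(g_\Gamma^t, m_\Gamma)$. Since $\Gamma$ is of divergence type (Proposition \ref{prop:divergence}) and the conical limit set $\Lambda_\Gamma^c$ has full $\mu_x$-measure (Corollary \ref{cor:conical_fullmeas}), the set of vectors $v \in S\Omega$ whose forward and backward endpoints both lie in $\Lambda_\Gamma^c$ has full $m$-measure. For such a $v$, by the definition of conical limit point, the projected geodesic $g_\Gamma^t \pi(v)$ returns infinitely often, in both time directions, to some fixed compact neighborhood of a basepoint. A Borel--Cantelli type argument then shows that any bounded positive-measure set $V$ in $S\Omega/\Gamma$ is non-wandering: we can enclose $V$ in a compact set $K$ such that $\int \mathbf{1}_K(g_\Gamma^t v)\,dt = \infty$ for $m_\Gamma$-a.e.\ $v \in V$. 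The Hopf decomposition theorem then rules out a dissipative part of positive measure.

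Finally I would run the Hopf argument. Given an $L^2$ flow-invariant function $f$ on $S\Omega/\Gamma$, the Birkhoff ergodic theorem on the conservative system gives time averages $f^\pm$ which agree $m_\Gamma$-a.e. The standard observation is that $f^+$ is constant along strong stable horospheres (by uniform continuity of the forward time averages on compact sets, and the fact that two points on the same strong stable leaf have forward orbits that get exponentially close) and $f^-$ is constant along strong unstable horospheres. The horospherical foliations in our setting are globally defined, transverse to the flow, and project to the $C^1$ horospheres in $\Omega$; in the strictly convex $C^1$ case, absolute continuity of these foliations with respect to $m_\Gamma$ is available from the Crampon--Marquis description of the stable/unstable distributions on the non-wandering set (see also \cite[Th.\,6.7]{Bray}). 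Combining invariance under the flow and under both horospherical foliations, and using absolute continuity to apply a Fubini-type argument, one concludes that $f$ is constant $m_\Gamma$-a.e.

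The main obstacle I anticipate is the absolute continuity step. In a Riemannian or CAT$(-1)$ setting this is standard, but here we do not have CAT$(0)$ and in the geometrically finite case the cusp regions introduce non-uniform hyperbolicity. One has to either invoke the uniform hyperbolicity on the non-wandering set outside cusps (requiring the asymptotically hyperbolic cusp hypothesis from Theorem \ref{thm:unifhyp_topmix}, or an argument that one can restrict the Hopf argument to the non-wandering set since $m_\Gamma$ is supported there) or appeal to the treatment of horospherical foliations in Hilbert geometry already carried out in \cite{crampon_these} and \cite{Bray}, which I would cite rather than reprove.
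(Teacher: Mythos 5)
Your proposal is correct and follows essentially the same route as the paper: reduction between the two systems via the Hopf coordinates $S\Omega = \del^2\Omega \times \real$ and the product form of $m$, complete conservativity deduced from the full $\mu_x$-measure of the conical limit set (the recurrence of geodesics with conical endpoints to a fixed compact set), and then the Hopf argument via invariance along the horospherical foliations, with the absolute continuity step deferred to \cite[Th.\,6.7]{Bray}. The only cosmetic differences are that you additionally invoke divergence type (the paper uses Corollary \ref{cor:conical_fullmeas} directly) and phrase the final step via Birkhoff time averages rather than invariant functions, but these are the same argument.
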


We remark that Propositions \ref{prop:divergence} and \ref{prop:ergodic} can also be established as part of a larger Hopf--Tsuji--Sullivan-type theorem, which establishes equivalences between several different ways of characterizing a subgroup of automorphisms as ``small'' or ``large'' in terms of conformal measures, associated Sullivan measures, and ergodicity of the geodesic flow.
Such a result was previously announced in \cite[Th.\,4.2.4]{crampon_these}, following the arguments of \cite[Th.\,1.7]{Roblin}.

We further remark that the circle of ideas that has appeared in the proof of Theorem \ref{thm:finite_BMmeas}, Proposition \ref{prop:noatoms} and their consequences are very similar to those appearing in the proof of analogous results of Dal'bo--Otal--Peign\'e in \cite{DOP}, which characterize geometrically finite Riemannian manifolds of pinched negative curvature with finite Sullivan measure in terms of Poincar\'e series.

\section{Mixing of the geodesic flow} \label{sec:mixing}
In this section, we prove our measure-theoretic mixing result. 
\begin{defn} 
Given a Borel probability space $(X,\nu)$, a flow $(g^t)_{t \in \real}$ on $X$ is said to be {\bf (strongly) mixing} with respect to $\nu$ if for all any $\varphi, \psi \in L^2(X,\nu)$,
\[ \int_X (\varphi \circ g_\Gamma^t) \cdot \psi \,d\nu \to \int_X \varphi \,d\nu \cdot \int_X \psi \,d\nu \]
as $t \to \pm\infty$, or equivalently if for all Borel subsets $A, B \subset X$, we have
\[ \nu(A \cap g^t B) \to \frac{\nu(A) \nu(B)}{\|\nu\|} .\]
as $t\to\pm\infty$
\end{defn}

Mixing is a characteristic property of geodesic flows in negative curvature. Measure-theoretic mixing results have been proven in a wide range of settings where Sullivan measures may be defined, for instance for geometrically finite subgroups in constant negative curvature (see e.g. \cite{Rudolph}), or in great generality for all discrete groups of CAT($-1$) isometries with quotient admitting a finite Sullivan measure by Roblin \cite[Th.\,3.1]{Roblin}. As far as we are aware, such results have not been announced in this context, although see \cite[\S3]{Sambarino_hypcvx} for related results about the mixing of Weyl chamber flows.

\begin{thm} \label{thm:mixing}
Let $\Omega$ be a strictly convex projective domain with $C^1$ boundary and $\Gamma \leq \Aut(\Omega)$ be a non-elementary discrete group such that $S \Omega / \Gamma$ admits a finite Bowen-Margulis measure $m_\Gamma$ associated to a $\Gamma$-equivariant conformal density of dimension $\delta(\Gamma)$.

Then the Hilbert geodesic flow $(g_\Gamma^t)_{t \in \real}$ on $S\Omega / \Gamma$ is mixing with $m_\Gamma$.
\end{thm}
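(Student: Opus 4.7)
The plan is to follow the Babillot--Ricks strategy as flagged in the introduction: show that non-mixing of an ergodic, finite-measure-preserving geodesic flow forces arithmeticity of the length spectrum, then invoke topological mixing on the non-wandering set (Theorem \ref{thm:unifhyp_topmix}) to rule this out. Since the construction of $m$ in Section \ref{sec:measures} via $d\mu_x \otimes d\mu_x \otimes ds$ and the cross-ratio formula $\beta_{\xi}(x,u)+\beta_{\eta}(x,u)=2\grop{x}{\xi}{\eta}$ are formally identical to the CAT($-1$) case, the Babillot framework should port over once we verify the two geometric ingredients it needs: (i) existence and $\Gamma$-invariance of a Hölder cross-ratio on $(\del\Omega)^4$, and (ii) local product structure of $m$ adapted to stable/unstable horospherical foliations. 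For (i), I would set $[\xi,\xi',\eta,\eta'] := \tfrac12(\beta_\xi(x,y)-\beta_{\xi'}(x,y)-\beta_\eta(x,y)+\beta_{\eta'}(x,y))$ for an auxiliary $y$ on suitable geodesics, check its independence of choices using the cocycle identity and the Gromov product, and note that on pairs $(\gamma^-,\gamma^+)$ arising from a hyperbolic $\gamma$ it recovers the translation length $\ell(\gamma)$. For (ii), I would invoke the horospherical product structure already used in the proof of Proposition \ref{prop:ergodic} (via the splitting $TS\Omega = \real X \oplus E^s \oplus E^u$ afforded, at least over $\widetilde{\mathtt{NW}}$, by Crampon--Marquis), together with the Patterson--Sullivan conformality to write $m$ locally as a product of a stable, unstable, and flow measure.

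Next, the core argument. Ergodicity (Proposition \ref{prop:ergodic}) and conservativity reduce the question to analyzing weak-$*$ limits of the probability measures $\nu_t := (g^t \times \mathrm{id})_* \tilde m$ on $(S\Omega/\Gamma)^2$, normalized by $\|m_\Gamma\|$. Passing to a subsequence $t_n \to \infty$ along which mixing fails, one extracts a $(g^t \times g^t)$-invariant probability measure $\nu_\infty \neq \frac{m_\Gamma \otimes m_\Gamma}{\|m_\Gamma\|^2}$, supported on the graph-like set where the two projections lie on the same weak-stable leaf. Expressing $\nu_\infty$ in horospherical coordinates and using invariance under $\Gamma$ and $(g^t)$, a standard Hopf-style argument shows $\nu_\infty$ is encoded by a measurable $(g^t)$-eigenfunction $F: S\Omega/\Gamma \to S^1$, i.e.\ $F \circ g^t = e^{i\lambda t} F$ for some $\lambda \in \real \setminus \{0\}$. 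Lifting $F$ to $\widetilde F$ on $S\Omega$ and using the cross-ratio identity together with $\Gamma$-equivariance, one deduces that for every hyperbolic $\gamma \in \Gamma$ we must have $\lambda \cdot \ell(\gamma) \in 2\pi\ints$, so the length spectrum $L(\Gamma) := \{\ell(\gamma) : \gamma \in \Gamma\text{ hyperbolic}\}$ is contained in the discrete subgroup $\tfrac{2\pi}{\lambda}\ints \subset \real$.

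The main obstacle is exactly this passage from the $(g^t)$-eigenfunction to the arithmeticity statement, because it requires the cross-ratio to detect translation lengths sharply and to be sufficiently regular along horospheres to apply Hopf's argument in the non-Riemannian, non-CAT($-1$) Hilbert setting. Here I would lean on the $C^1$ regularity of $\del\Omega$ and Busemann functions, together with the strict convexity of Hilbert metric balls (cited from \cite{CLT}), to obtain the Hölder-type control that Babillot extracts from CAT($-1$) geometry; the analogue in the divisible case has been carried out by Bray and in the convex cocompact case by Crampon, so one expects only book-keeping for the geometrically finite extension (restriction to $\widetilde{\mathtt{NW}}$ where the hyperbolic splitting is available, and integrability over cusps controlled by the estimates in the proof of Theorem \ref{thm:finite_BMmeas}).

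Finally, to close the argument: by Theorem \ref{thm:unifhyp_topmix} the flow restricted to the non-wandering set is topologically mixing (at least under asymptotically hyperbolic cusps; otherwise one uses the weaker but sufficient statement about density of periodic orbit lengths), so closed-orbit periods are dense in $\real_{\geq 0}$. Hence $L(\Gamma)$ cannot lie in any discrete subgroup of $\real$, contradicting the arithmeticity derived above. Therefore no non-trivial $(g^t)$-eigenfunction exists, every weak-$*$ limit of $\nu_t$ equals $\frac{m_\Gamma \otimes m_\Gamma}{\|m_\Gamma\|^2}$, and mixing follows for indicator functions of Borel sets and then, by density in $L^2(m_\Gamma)$, for all test functions $\varphi,\psi \in L^2(S\Omega/\Gamma, m_\Gamma)$.
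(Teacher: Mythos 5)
Your overall strategy is the same as the paper's: assume mixing fails, show the length spectrum is forced into a discrete subgroup of $\real$ via a cross-ratio argument, and contradict the density of the (group generated by the) length spectrum coming from topological mixing. However, there is a genuine gap at the crux of your argument: you claim that from a non-trivial weak-$*$ limit $\nu_\infty$ of the self-joinings one extracts, by ``a standard Hopf-style argument,'' a measurable eigenfunction $F$ with $F\circ g^t = e^{i\lambda t}F$. Failure of \emph{strong} mixing does not imply failure of \emph{weak} mixing, and the existence of a non-constant $L^2$-eigenfunction is precisely the characterization of non-weak-mixing; so this step is not standard and cannot be taken for granted. This is exactly the difficulty that Babillot's lemma (Lemma \ref{lem:Babillot} in the paper) is designed to circumvent: it produces, from the failure of mixing, a non-constant function $\psi$ that is simultaneously the weak-$L^2$ limit of $\varphi\circ T_{s_n}$ and of $\varphi\circ T_{-s_n}$ (and an a.s.\ limit of Ces\`aro averages), and one then studies the closed group $\Pi$ of periods of $t\mapsto\tilde\psi(\tilde g^t v)$ along flow lines. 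Ergodicity forces $\Pi$ to be a.s.\ constant, and $\Pi=\real$ is excluded because $\psi$ is non-constant; one lands on $\Pi=a\ints$ without ever producing an eigenfunction. Your proposal assumes the conclusion of this analysis rather than supplying it.

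Two further points. First, your cross-ratio $\tfrac12(\beta_\xi(x,y)-\beta_{\xi'}(x,y)-\beta_\eta(x,y)+\beta_{\eta'}(x,y))$ with a single auxiliary point $y$ is not well-defined independently of $y$; the paper's definition uses $b_x(\xi,\eta)=\inf_p(\beta_\xi+\beta_\eta)(p,x)$, and the key identity (Lemma \ref{lem:Ricks103}) expresses $B$ using four points, one on each of the four geodesics $(\xi\eta),(\xi\eta'),(\xi'\eta'),(\xi'\eta)$. This four-point form is what makes the quadrilateral-of-horospheres lemma work, namely that traversing $\horosphere^u,\horosphere^s,\horosphere^u,\horosphere^s$ around the four geodesics returns you to $g^{B(\xi,\xi',\eta,\eta')}v_0$, which is how cross-ratios are identified with periods of $\tilde\psi$ (and note the paper gets $B(\gamma^-,\gamma^+,\gamma\xi,\xi)=2\ell(\gamma)$, not $\ell(\gamma)$). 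Second, you invoke the hyperbolic splitting $TS\Omega=\real X\oplus E^s\oplus E^u$ for the local product structure, but that splitting is only available under the additional asymptotically hyperbolic cusp hypothesis of Theorem \ref{thm:unifhyp_topmix}, which is not assumed here; the paper's argument only needs the horosphere sets $\horosphere^{s}(v),\horosphere^{u}(v)$ furnished by the $C^1$ boundary, together with the measurability and $\Gamma$-invariance of the period-group map, so it applies to any $\Gamma$ with finite Sullivan measure.
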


We remark that we have topological mixing from \cite{CM14}, but this does not imply measure-theoretic mixing for a general dynamical system. In this case, however, it does, by arguments adapted from \cite{Babillot} and \cite{Ricks}.
We will need two lemmas. 

First, consider the {\bf length spectrum} of $(g_\Gamma^t)$, i.e. the collection of lengths of all closed geodesics in $S\Omega/\Gamma$. 
As noted in \cite[Prop.\,6.1]{CM14}, the geodesic flow is topologically transitive, and so topological mixing is equivalent to the density of the length spectrum. 
In particular, we have
\begin{lem}[{\cite[Prop.\,6.1]{CM14}}] \label{lem:lenspec_dense}
For $\Omega$ and $\Gamma$ as above, the group generated by the length spectrum of $(g_\Gamma^t)$ is dense in $\real$. 
\end{lem}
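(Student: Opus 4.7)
The plan is to derive this density statement directly from the topological mixing of the geodesic flow on the non-wandering set (Theorem \ref{thm:unifhyp_topmix}), using the correspondence between closed geodesics and conjugacy classes of primitive hyperbolic elements. Because $\Omega$ is strictly convex with $C^1$ boundary, every closed orbit of $(g_\Gamma^t)$ lifts to the axis of some hyperbolic $\gamma \in \Gamma$, and the period of the orbit coincides with the Hilbert translation length of $\gamma$; so the length spectrum $L(\Gamma) \subset \real_{>0}$ is precisely the set of translation lengths of hyperbolic elements.

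I would then argue by contrapositive. Suppose the subgroup $G \leq \real$ generated by $L(\Gamma)$ is not dense. Since $G$ is closed in $\real$, it must equal $c\ints$ for some $c > 0$. Pick a small open set $U \subset S\Omega / \Gamma$ meeting the non-wandering set, and consider the set of return times $R(U) = \{t > 0 : g_\Gamma^t U \cap U \neq \varnothing\}$. A closing-lemma/shadowing argument on $\widetilde{\mathtt{NW}}$, available from the uniform hyperbolicity component of Theorem \ref{thm:unifhyp_topmix}, lets one approximate any $t \in R(U)$ (for $U$ sufficiently small) by the period of some nearby closed orbit; hence $R(U)$ is contained in a bounded neighborhood of $L(\Gamma) \subset c\ints$. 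This forces the set of return times to miss a definite open interval modulo $c$, which contradicts the topological mixing statement $g_\Gamma^t U \cap U \neq \varnothing$ for all large $t$. An essentially equivalent route would pick two hyperbolic elements $h_1, h_2$ in $\Gamma$ with disjoint fixed-point pairs (available since $\Gamma$ is non-elementary), generate a Schottky subgroup via ping-pong, and use Gromov-product estimates to compute translation lengths of words $h_1^{n_1} h_2^{m_1} \cdots h_1^{n_k} h_2^{m_k}$ as $n_i \ell(h_1) + m_i \ell(h_2) + O(1)$, which fills an asymptotically dense subset of $\real$ outside a lattice only if one can arrange an irrational ratio of translation lengths somewhere in $\Gamma$.

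The principal obstacle is precisely this last passage: going from the qualitative statement ``the flow is topologically mixing'' to the quantitative statement ``closed orbits exist with periods close to any prescribed large value''. This is the content of an orbit-specification or closing lemma, and is not free of charge in a general Finsler setting; it must be supplied by the uniform hyperbolicity on the non-wandering set from \cite[Th.\,5.2]{CM14}. Once such a closing lemma is in hand, the equivalence between topological mixing and density of $\langle L(\Gamma) \rangle$ in $\real$ is a standard piece of hyperbolic-dynamics folklore, which is exactly what Crampon and Marquis record in the proposition cited in the lemma statement.
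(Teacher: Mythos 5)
The paper offers no proof of this lemma: it is quoted verbatim from \cite[Prop.\,6.1]{CM14}, and the surrounding text merely records that, the flow being topologically transitive, topological mixing and density of the group generated by the length spectrum are equivalent. Your proposal attempts to reconstruct an actual argument, and both routes you sketch have genuine gaps. Route 1 (deduce density from topological mixing via a closing lemma) founders on the fact that the closing/shadowing step you invoke requires uniform hyperbolicity of the flow on the non-wandering set, and by Theorem \ref{thm:unifhyp_topmix} that is only available under the additional ``asymptotically hyperbolic cusps'' hypothesis---which is not assumed here: the lemma is used inside Theorem \ref{thm:mixing}, whose standing hypotheses are only that $\Gamma$ is non-elementary with finite Sullivan measure, not even geometric finiteness. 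Without some substitute for the closing lemma, ``topologically mixing'' does not formally yield ``periods of closed orbits generate a dense subgroup'' for a general flow, and you correctly identify this as the crux but do not supply it. There is also a structural awkwardness: in \cite{CM14} the non-arithmeticity of the length spectrum is established first and topological mixing is \emph{deduced} from it, so running the implication in the other direction cannot stand in for the cited proposition.

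Route 2 (Schottky subgroup via ping-pong, plus Gromov-product estimates giving translation lengths of words as $n_i\ell(h_1)+m_i\ell(h_2)+O(1)$) is the right shape for a direct proof and is much closer to how such statements are actually proven, but as written it is circular: you concede that it forces density ``only if one can arrange an irrational ratio of translation lengths somewhere in $\Gamma$,'' and that is precisely the non-arithmeticity statement to be proven. A bounded $O(1)$ error does not by itself place any translation length outside a putative lattice $c\ints$; the missing work is to control the correction terms precisely enough (they converge to cross-ratio-type limits as the exponents grow) to show they cannot all lie in a discrete subgroup for a non-elementary $\Gamma$. That analysis is the actual content of the cited proposition, and it is absent from the proposal.
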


We will show that if $m_\Gamma$ is not mixing, then the length spectrum is contained in a discrete subgroup of $\real$; since this is not the case, $m$ must in fact be mixing. 

Second, we have the following general lemma from ergodic theory:
\begin{lem}[{\cite[Lem.\,1]{Babillot}}] \label{lem:Babillot}
Let $(X, \mathcal{B}, \nu, (T_t)_{t \in \real})$ be a measure-preserving dynamical system, where $(X, \mathcal{B})$ is a standard Borel space, $\nu$ a Borel measure on $(X, \mathcal{B})$, and $(T_t)_{t\in \real}$ an action of $\real$ on $X$ by measure-preserving transformations. Let $\varphi \in L^2(X,\nu)$ be a real-valued function on $X$ such that $\int \varphi\,d\nu = 0$.

If there exists a sequence of reals $(t_n)$ with $t_n\to\infty$ such that $\varphi \circ T_{t_n}$ does not converge to 0 in the weak-$L^2$ topology, then there exist a sequence of reals $(s_n)$ with $s_n \to \infty$ and a non-constant function $\psi$ in $L^2(X,\nu)$ such that $\varphi\circ T_{s_n} \to \psi$ and $\varphi\circ T_{-s_n} \to \psi$ in the weak-$L^2$ topology, and hence (up to subsequence) the Ces\`aro averages
\[ A_{K^2} := \frac1{K^2} \sum_{k=1}^{K^2} \varphi\circ T_{\pm s_k} \]
converge almost surely to $\psi$.
\end{lem}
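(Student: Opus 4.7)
The plan is to proceed in three stages: first extract a subsequence along which $\varphi \circ T_{s_n}$ converges weakly in $L^2$ to some nonzero $\psi_+$; second, refine that subsequence so that the backward iterates $\varphi \circ T_{-s_n}$ have the same weak limit; and third, thin once more to secure almost-sure convergence of the Ces\`aro averages $A_{K^2}$.

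For the first stage, the Koopman operator $U_t: f \mapsto f \circ T_t$ is unitary on the complexification $L^2(X,\nu;\complex)$, so the sequence $\{\varphi \circ T_{t_n}\}$ is norm-bounded and hence lies in a weakly compact ball. A diagonal extraction against a countable dense subset of $L^2$ yields a subsequence along which $\varphi \circ T_{s_n}$ converges weakly to some $\psi_+$. The hypothesis lets us take $\psi_+ \neq 0$, and weak-testing against $\mathbf{1}_X$ together with $\int \varphi\,d\nu = 0$ shows $\int \psi_+\,d\nu = 0$, so $\psi_+$ is non-constant.

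For the second stage, the idea is to exploit that $T_t$ acts on a real measure space, so $U_t$ commutes with complex conjugation $C$ on $L^2(X,\nu;\complex)$. By Stone's theorem $U_t = e^{itH}$, and $CU_tC = U_t$ forces the spectral projections of $H$ to satisfy $CE(A)C = E(-A)$, making the spectral measure $\sigma_\varphi$ of the real vector $\varphi$ symmetric about $0$ on $\real$. Working inside the cyclic subspace generated by $\varphi$, identified via the spectral theorem with $L^2(\real,\sigma_\varphi)$ under $\varphi \leftrightarrow 1$ and $U_t\varphi \leftrightarrow e^{it\lambda}$, the iterates $\varphi\circ T_s$ correspond to $e^{is\lambda}$, and only the compact (essentially atomic) part of $\sigma_\varphi$ can contribute to a nonzero weak limit as $s\to\infty$. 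Invoking Bohr almost periodicity of the one-parameter group $s \mapsto (e^{is\lambda})_\lambda$ into the compact group $\mathbb{T}^{\mathrm{atoms}(\sigma_\varphi)}$, I extract a further subsequence $s_n \to \infty$ along which $e^{is_n\lambda} \to 1$ for every atom of $\sigma_\varphi$; by the symmetry $\sigma_\varphi(-A) = \sigma_\varphi(A)$, the conjugate phases $e^{-is_n\lambda} \to 1$ on every atom as well. Hence $\varphi \circ T_{s_n}$ and $\varphi \circ T_{-s_n}$ converge weakly to the same limit $\psi$, which is nonzero because $\psi_+$ was.

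For the third stage, by thinning once more so that the residual correlations $\langle U_{s_i}\varphi - \psi,\, U_{s_j}\varphi - \psi\rangle$ decay summably in $|i - j|$ (using mean-ergodic decay on the complementary spectral part of $\varphi$), I obtain a variance bound $\|A_N^\pm - \psi\|_{L^2}^2 \lesssim 1/N$ for $A_N^\pm := \frac1N\sum_{k=1}^N \varphi \circ T_{\pm s_k}$. Along the sparse subsequence $N = K^2$, this gives $\sum_K \|A_{K^2}^\pm - \psi\|_{L^2}^2 < \infty$, and Chebyshev plus Borel--Cantelli deliver $A_{K^2}^\pm \to \psi$ almost surely. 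The hardest step is Stage 2: matching the forward and backward weak limits in a single sequence genuinely requires both the real structure (to obtain symmetry of $\sigma_\varphi$) and a coherent Bohr-recurrence extraction across possibly infinitely many atoms, and any singular-continuous piece of $\sigma_\varphi$ must be shown to contribute $0$ to every weak subsequential limit---a standard but technically delicate point handled by Wiener-type arguments combined with diagonal extraction.
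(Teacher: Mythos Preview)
The paper does not prove this lemma; it is quoted from \cite{Babillot} without argument, so there is no in-paper proof to compare against and your proposal must stand on its own.

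Your Stages~1 and~3 are essentially sound: Stage~1 is routine weak compactness, and Stage~3 is the Banach--Saks theorem (weakly convergent sequences in Hilbert space admit subsequences whose Ces\`aro means converge in norm) followed by a standard sparsification along $N=K^2$ to upgrade summable $L^2$-errors to almost-sure convergence. The genuine gap is in Stage~2. You assert that only the atomic part of $\sigma_\varphi$ can contribute to a nonzero weak subsequential limit of $U_{s}\varphi$, and that the singular-continuous part contributes $0$ ``by Wiener-type arguments combined with diagonal extraction''. This is false: Wiener's theorem controls only Ces\`aro means of $|\widehat{\sigma}_\varphi|^2$, not individual subsequential limits. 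If your claim held, weak mixing would imply mixing, which it does not --- Chacon's transformation, for example, is weakly mixing (so $\sigma_\varphi$ has no atoms for any mean-zero $\varphi$) yet not mixing (so some $U_{t_n}\varphi$ has a nonzero weak limit, coming entirely from the continuous part of the spectrum). In such a situation your Bohr-recurrence extraction, which only pins down the atoms, yields $\psi = 0$ and the argument collapses. The symmetry of $\sigma_\varphi$ arising from the reality of $\varphi$ is indeed the right ingredient for matching the forward and backward limits, but the actual argument does not go through a reduction to discrete spectrum; see Babillot's original paper for the short correct proof.
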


\begin{proof}[Proof of Theorem \ref{thm:mixing}]
We recall that $m_\Gamma$ is supported on the non-wandering set and is ergodic. Let $\mu$ be the ergodic current associated with $m$, i.e. $\mu$ is a measure on $\del^2\Omega$ such that $m = \mu \otimes ds$. 

Suppose the geodesic flow is not mixing with respect to $m_\Gamma$. Then there exists a compactly-supported function $\varphi \in L^2(S\Omega/\Gamma,m_\Gamma)$ on the non-wandering set such that $\int \varphi \,dm_\Gamma = 0$ but  $\varphi \circ g_\Gamma^t$ does not converge weakly to 0. By Lemma \ref{lem:Babillot} we may find a non-constant function $\psi$ which is the almost-sure limit of Ces\`aro averages of $\varphi$ for positive and negative times. Let $\tilde\psi$ be its lift to the universal cover $S\Omega$. We smooth $\tilde\psi$ along the flow by replacing it with the function $\tilde\psi_\ell(v) = \int_0^\eps \tilde\psi(\tilde{g}^s v)) \,ds$. Choosing $\eps > 0$ sufficiently small ensures that $\tilde\psi_\ell$ is not constant, and now there exists a set $E_0 \subset \del^2\Omega$ of full $\mu$-measure such that for every $v \in \pi^{-1}(E_0)$, the function $t \mapsto \tilde\psi_\ell(\tilde{g}^t v)$ is well-defined and continuous at any $t \in \real$. Concretely, $E_0 = \Lambda_\Gamma \times \Lambda_\Gamma \smallsetminus \Delta$, which consists of pairs of endpoints of geodesics in the non-wandering set.

The closed subgroup $\Pi < \real$ given by the periods of $t \mapsto \tilde\psi(\tilde{g}^t(v))$ depends only on the geodesic $(\xi,\eta)$ containing $v$, so that we get a measurable\footnote{With respect to the Borel $\sigma$-algebra for the Chabauty topology}
map from $E_0$ into the set of closed subgroups of $\real$. This map is $\Gamma$-invariant, and hence by the ergodicity of $\mu$ is $\mu$-almost surely constant.

Suppose $\Pi = \real$. This would mean that $\tilde\psi$ does not depend on time, and so defines a $\Gamma$-invariant function on $E_0 = \Lambda_\Gamma^2 \setminus \Delta$. By the ergodicity of $\mu$, this function is $\mu$-almost surely constant, which contradicts that $\psi$ is not constant. Hence there must exist $a \geq 0$ such that $\Pi = a\ints$ on a set $E_1 \subset E_0$ of full $\mu$-measure.

We now use cross-ratios to conclude that if $m_\Gamma$ is not mixing, then the length spectrum is contained in the discrete subgroup $\frac a2 \ints \leq \real$.
Define the {\bf cross-ratio}
\[ B(\xi,\xi',\eta,\eta') := b_x(\xi,\eta) + b_x(\xi',\eta') - b_x(\xi,\eta') - b_x(\xi',\eta) \]
where $b_x(\xi,\eta) := \inf_{p\in X} (\beta_\xi + \beta_\eta)(p,x)$. This definition appeared in \cite{Ricks} in the setting of rank-one CAT(0) spaces; below we prove analogous properties for the cross-ratios in our setting.  Let us first show that the cross-ratio is well-defined independent of the choice of $x \in \Omega$:
\begin{lem}
\label{lem:Ricks52}
For any $\xi,\eta \in \del\Omega$ and $x \in \Omega$, $b_x(\xi,\eta)$ is finite, and 
\[ b_x(\xi,\eta) = (\beta_\xi + \beta_\eta)(p,x) \]
if and only if $p$ lies on the bi-infinite geodesic $(\xi\eta)$.
\begin{proof}
Given $x \in \Omega$, consider $b_{\xi\eta,x}: \Omega \to \real \cup \{-\infty\}$ defined by $y \mapsto (\beta_\xi + \beta_\eta)(y,x)$. This function is constant on $(\xi\eta)$, with value given by $d_\Omega(p_\xi x, p_\eta x)$, where $p_\xi x$ denotes where the horoball through $x$ based at $\xi$ intersects $(\xi\eta)$. We now claim that $b_{\xi\eta,x}(p_\xi y) \leq b_{\xi\eta,x}(y)$ for any $\xi,\eta$, $x$ and $y$, with equality if and only if $y \in (\xi\eta)$. To see this: note that $\beta_\xi(y,x) = \beta_\xi(p_\xi y,x)$; on the other hand, we can see geometrically that $\beta_\eta(y,x) \geq \beta_\eta(p_\xi y,x)$, with equality if and only if $y = p_\xi(y)$.
\end{proof} \end{lem}

\begin{lem}
 \label{lem:Ricks103}
For any four pairwise distinct $\xi, \xi', \eta, \eta' \in \del\Omega$, we have 
\[ B(\xi,\xi',\eta,\eta') = \beta_\xi(y_0,y_1) + \beta_{\xi'}(y_2,y_3) + \beta_{\eta}(y_0,y_3) + \beta_{\eta'}(y_2,y_1) \]
for any four points $x_0 \in (\xi\eta)$, $x_1 \in (\xi\eta')$, $x_2 \in (\xi'\eta')$, and $x_3 \in (\xi'\eta)$. 
\begin{proof}
By Lemma \ref{lem:Ricks52} and the definition of the cross-ratio,
\begin{align*}
B(\xi,\xi',\eta,\eta') & = (\beta_\xi(x_0,x) + \beta_\eta(x_0,x)) + (\beta_{\xi'}(x_2,x) + \beta_{\eta'}(x_2,x)) \\
 & \phantom{ = } - (\beta_\xi(x_1,x) + \beta_{\eta'}(x_1,x)) - (\beta_\eta(x_3,x) + \beta_{\xi'}(x_3,x)).
\end{align*}
for any $x \in \Omega$, and $x_0,x_1,x_2,x_3$ as given in the statement of the lemma. The lemma then follows using the cocycle property of Busemann functions.
\end{proof} \end{lem}

Hence the cross-ratio $B(\xi,\xi',\eta,\eta')$ is well-defined independent of the choice of $x \in \Omega$, and in particular $x$ can be taken to be the same basepoint as above.

The next lemma gives an equivalent geometric definition of the cross-ratio. To state it succinctly we introduce a bit of notation: given $v \in S\Omega$ a unit tangent vector, let $\pi(v)$ denote its basepoint in $\Omega$, and let $(v^- v^+)$ be the bi-infinite geodesic it is tangent to, with backward endpoint $v^-$ and forward endpoint $v^+$. We write $\horosphere^u(v) \subset S\Omega$ 
to denote the set of unit tangent vectors with basepoint on the horosphere $\horosphere _{v^-}(\pi(v))$ and tangent to geodesics with backward endpoint $v^-$, and $\horosphere^s(v) \subset S\Omega$ 
to denote the set of unit tangent vectors with basepoint on the horosphere $\horosphere_{v^+}(\pi(v))$ and tangent to geodesics with forward endpoint $v^+$.

\begin{lem}
Given four pairwise distinct $\xi, \xi', \eta, \eta' \in \del\Omega$, pick $v_0 \in S\Omega$ tangent to the geodesic $(\xi\eta)$, and define, in turn, $v_1 \in \horosphere^u(v_0) \cap (\xi\eta')$, $v_2 \in \horosphere^s(v_1) \cap (\xi'\eta')$, $v_3 \in \horosphere^u(v_2) \cap (\xi'\eta)$, and $v_4 \in \horosphere^s(v_3) \cap (\xi\eta)$. 

Then $v_4 = g^{B(\xi,\xi',\eta,\eta')} v_0$ (where, recall, $g^t$ is the Hilbert geodesic flow.)
\begin{proof}
From Lemma \ref{lem:Ricks103}, we have, writing $x_i := \pi(v_i)$ to denote the basepoints of our vectors,
\[ B(\xi,\xi',\eta,\eta') = \beta_\xi(x_0,x_1) + \beta_{\xi'}(x_2,x_3) + \beta_\eta(x_0,x_3) + \beta_{\eta'}(x_2,x_1) .\]
By our choice of $v_1, v_2, v_3, v_4$, we have $\beta_\xi(x_0,x_1) = \beta_{\xi'}(x_2,x_3) = \beta_\eta(x_3,x_4) = \beta_{\eta'}(x_2,x_1) = 0$. Hence
\[ B(\xi,\xi',\eta,\eta') = \beta_\eta(x_0,x_3) = \beta_\eta(x_0,x_4) \]
by the cocycle properties of our Busemann functions. Since $v_4$ and $v_0$ are both tangent to $(\xi\eta)$, pointing towards $\eta$, $v_4 = g^{\beta_\eta(x_0,x_4)} = g^{B(\xi,\xi',\eta,\eta')} v_0$. \end{proof}
\end{lem}

Thus for any suitable 4-tuple $(\xi,\xi',\eta,\eta')$ of pairwise distinct points in $\del\Omega$, $B(\xi,\xi',\eta,\eta')$ is a period of $t \mapsto \tilde\psi_\ell(\tilde{g}^t v)$ and thus belongs to our above closed group $a\ints$. 

To be precise, ``suitable'' here means  the following: $\tilde\psi_\ell$ is the almost-sure limit of the corresponding smoothed Ces\`aro averages of $\tilde\varphi$, so if $\tilde\psi^+$ and $\tilde\psi^-$ are the upper limits of these averages for positive / negative times (resp.), the set
\[ E = \{(\xi,\eta) \in E_1 \st \tilde\psi^+(v) = \tilde\psi^-(v) = \tilde\psi(v) \,\forall v \in \pi^{-1}(\xi,\eta) \} \]
has full $\mu$-measure. 
We use the product structure of $\mu$ to define
\begin{align*}
E^- & := \{\xi \in \Lambda_\Gamma \st (\xi,\eta') \in E \, \mu_x\mbox{-almost every }\eta'\} \\
E^+ & := \{\eta \in \Lambda_\Gamma \st (\xi',\eta) \in E \, \mu_x\mbox{-almost every }\xi'\}
\end{align*}
By Fubini's theorem, we have $\mu_x(E^-) = \mu_x(E^+) = 1$ and so $E^- \times E^+$ has full measure.

We say then that $(\xi, \eta, \xi', \eta') \in \del^4\Omega$ is a suitable tuple if $(\xi,\eta) \in E \cap (E^- \times E^+)$ and $(\xi',\eta), (\xi,\eta'), (\xi',\eta') \in E$. 

Note that the subset $E_4$ of suitable tuples has full measure --- $E \cap (E^- \times E^+)$ has full $\mu$-measure, and for any $(\xi,\eta)$ in this set, the set of $(\xi',\eta')$ satisfying the second condition has full $\mu$-measure. By continuity of the cross-ratio, the conclusion that $B(\xi,\eta,\xi',\eta') \in a\ints$ extends to all 4-tuples of pairwise distinct points $(\xi,\xi',\eta,\eta') \in \del^4\Omega$.

Finally, we relate cross-ratios to hyperbolic translation lengths:
\begin{lem}
If $\gamma$ is a hyperbolic isometry of $\Gamma$, then $B(\gamma^-, \gamma^+, \gamma\xi, \xi) = 2\ell(\gamma)$ for all $\xi \in \del\Omega$. 
\begin{proof}
By Lemma \ref{lem:Ricks103}, we have
\[ B(\gamma^-, \gamma^+, \gamma\xi, \xi) = \beta_{\gamma^-}(w,x) + \beta_{\gamma^+}(y,z) + \beta_{\gamma\xi}(w,z) + \beta_\xi(y,x) \]
where, writing $\zeta := \gamma\xi$, $w \in (\gamma^-\zeta)$, $x \in (\gamma^- \xi)$, $y \in (\gamma^+ \xi)$ and $z \in (\gamma^+\zeta)$. We observe that the geodesics $\gamma^-\zeta$ and $\gamma^+\xi$ necessarily intersect by the north-south dynamics of $\gamma$, so we may take $w = y = (\gamma^-\zeta) \cap (\gamma^+ \xi)$, and thus
\[ B(\gamma^-, \gamma^+, \gamma\xi, \xi) = \beta_{\gamma^-}(y,x) + \beta_{\gamma^+}(y,z) + \beta_{\gamma\xi}(y,z) + \beta_\xi(y,x) .\]
Observe that $\gamma$ sends $(\gamma^+\xi)$ to $(\gamma^+\zeta)$ and $(\gamma^-\xi)$ to $(\gamma^-\zeta)$, so we may choose $z = \gamma y$ and $x = \gamma^{-1} y$.  
Then, using the invariance of the Busemann functions, we have
\[ B(\gamma^-, \gamma^+, \gamma\xi, \xi) = \beta_{\gamma^-}(y,\gamma^{-1}y) + \beta_{\gamma^+}(y,\gamma y) + \beta_{\gamma\xi}(\gamma y,y) + \beta_{\gamma\xi}(y, \gamma y) .\]
Using the cocycle properties and the fact that $\beta_{\gamma^+}(y, \gamma y) = \ell(\gamma)$ for any hyperbolic isometry $\gamma$, we are done.
\end{proof} \end{lem}

Hence the length spectrum would be contained in the closed subgroup $\frac a2 \ints$. Since this would contradict Lemma \ref{lem:lenspec_dense} in our case, we conclude that the geodesic flow must in fact be mixing with respect to $m$.
\end{proof}

We note that Babillot obtains equidistribution of the horospheres as a consequence of mixing of the geodesic flow \cite[Th.\,3]{Babillot}, 
and we can do likewise here.

\section{Orbital equidistribution of the group} \label{sec:orbit_equidist}
In this section we prove an orbital equidistribution result, with consequences for orbital counting functions:
\newcommand{\convexhull}{\mathcal{CH}}
\begin{thm}[cf. {\cite[Th.\,4.1.1]{Roblin}}] \label{thm:orbit_equidist}
Suppose $\Omega$ is a strictly convex projective domain with $C^1$ boundary and  
$\Gamma \leq \Aut(\Omega)$ is a non-elementary discrete subgroup such that $S \Omega / \Gamma$ admits a finite Sullivan measure $m_\Gamma$ associated to a $\Gamma$-equivariant conformal density $\mu$ of dimension $\delta(\Gamma)$.

Then, for all $x, y \in \Omega$,  
\[ \delta \|m_\Gamma\| e^{-\delta t} \sum_{\substack{\gamma \in \Gamma\\ d_\Omega(x, \gamma y) \leq t}} \dirac_{\gamma y} \otimes \dirac_{\gamma^{-1} x} \] 
converges weakly in $C(\bar\Omega \times \bar\Omega)^*$ to $\mu_x \otimes \mu_y$ as $t \to \infty$.
\end{thm}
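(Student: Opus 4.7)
The plan is to adapt Roblin's proof of \cite[Th.\,4.1.1]{Roblin} from the CAT($-1$) setting to our Hilbert-geometric setting; the principal input is the mixing of the geodesic flow (Theorem \ref{thm:mixing}), together with the Sullivan shadow lemma (Lemma \ref{lem:sullivan_shadow}), the shadow-multiplicity bound (Lemma \ref{lem:roblin_1B}), the Gromov-product bound (Lemma \ref{lem:grop_bound}), and non-atomicity of the Patterson--Sullivan measures (Proposition \ref{prop:noatoms}). By a standard Portmanteau-type reduction, it suffices to establish, for pairs of open sets $U, V \subset \bar\Omega$ with $\mu_x(\partial U) = \mu_y(\partial V) = 0$, that
\[ \delta \|m_\Gamma\| e^{-\delta t} \,\#\{\gamma \in \Gamma \st d_\Omega(x, \gamma y) \leq t,\, \gamma y \in U,\, \gamma^{-1} x \in V\} \longrightarrow \mu_x(U) \mu_y(V) \]
as $t \to \infty$; since $\mu_x, \mu_y$ are supported on $\del\Omega$, only the boundary traces of $U, V$ contribute in the limit.

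To geometrize the count, fix $r > 0$ large enough that the shadow lemma applies. For each $\gamma$ counted on the left-hand side, consider the flow cell $K_\gamma \subset S\Omega$ consisting of vectors $v$ whose basepoint lies within Hilbert distance $r$ of the geodesic from $x$ to $\gamma y$, with $v^+ \in \shadow_r(x, \gamma y)$ and $v^- \in \shadow_r(\gamma y, x)$. The product definition of $dm$, the shadow lemma, and Lemma \ref{lem:grop_bound} together imply $m(K_\gamma) \asymp e^{-\delta \cdot d_\Omega(x,\gamma y)}$ up to constants depending only on $r, x, y$, and the cells $\{K_\gamma\}$ have bounded overlap multiplicity (as per the argument in Lemma \ref{lem:roblin_1B}) once passed to the quotient $S\Omega / \Gamma$.

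The sought asymptotic is then extracted via mixing. Form $\Gamma$-invariant bump functions $f_U, f_V$ on $S\Omega/\Gamma$, indicator-like on the union of flow cells whose shadows lie in $U$, respectively $V$ (using the flip involution $\flip$ for the latter), and consider $\int f_U(g_\Gamma^t v) f_V(v) \, dm_\Gamma(v)$. By Theorem \ref{thm:mixing} this integral converges to $\|m_\Gamma\|^{-1} \int f_U \, dm_\Gamma \int f_V \, dm_\Gamma$, which via the shadow-lemma estimate evaluates to a constant multiple of $\mu_x(U) \mu_y(V)$. On the other hand, unfolding the $\Gamma$-sum and integrating along the flow direction expresses the same integral as a weighted sum over $\gamma$ with $d_\Omega(x, \gamma y) \in [t - O(1), t + O(1)]$, each term weighted by $e^{-\delta d_\Omega(x,\gamma y)}$. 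Comparing the two expressions at each unit-length time window $[k, k+1]$ for $k \leq t$ and summing, a summation-by-parts (Tauberian-style) argument then yields the cumulative count asymptotic with the claimed constant $(\delta \|m_\Gamma\|)^{-1}$.

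The main obstacle is controlling the contribution of orbit elements drifting into cusp regions, where the shadow-lemma constants can a priori degenerate and orbit multiplicities grow; here we will rely on the finiteness of $m_\Gamma$ (Theorem \ref{thm:finite_BMmeas}) together with the critical-exponent gap $\delta_\Gamma > \delta_P$ (Lemma \ref{lem:critgap_para}) to bound the contribution of $\gamma$'s with $\gamma y$ or $\gamma^{-1} x$ deep inside a cusp, paralleling the additional care needed in Roblin's geometrically-finite setting. Non-atomicity of the Patterson--Sullivan measures (Proposition \ref{prop:noatoms}) ensures that the boundary choices for $U, V$ produce negligible errors and allows us to conclude weak$^*$ convergence against arbitrary continuous test functions in $C(\bar\Omega \times \bar\Omega)$.
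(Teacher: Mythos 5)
Your overall strategy---flow boxes built from shadows, a correlation integral $\int_0^T e^{\delta t}\sum_{\gamma} m(K^+\cap g^{-t}\gamma K^-)\,dt$ evaluated once by mixing and once by geometric bookkeeping over the orbit, then a partition of unity on the compact $\bar\Omega\times\bar\Omega$---is exactly the Roblin scheme the paper follows, so the skeleton is right. But there is a genuine gap at the crux. To extract an asymptotic with the exact constant, rather than a two-sided bound up to multiplicative constants, you need matching upper \emph{and} lower bounds relating the set of $\gamma$ contributing to $m(K^+\cap g^{-t}\gamma K^-)$ to the set of $\gamma$ with $\gamma y$ and $\gamma^{-1}x$ in prescribed regions; your assertion that $m(K_\gamma)\asymp e^{-\delta d_\Omega(x,\gamma y)}$ with bounded overlap, and that the unfolded sum ranges over $\gamma$ with $d_\Omega(x,\gamma y)\in[t-O(1),t+O(1)]$, delivers only the $\asymp$ version. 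The paper handles this with the expanded and contracted cones $\cone_r^\pm$ and shadows $\shadow_r^\pm$, the sets $\limshade_r(x,\gamma y)$, and Lemma \ref{lem:Link66}; this last point is precisely where Roblin's original lower-bound argument had an error corrected by Link, since $\shadow_r^-(b,a)\times\shadow_r^-(a,b)\not\subset\limshade_r(a,b)$ in general, even for $\Omega=\HH^2$. Your sketch treats this correspondence as exact up to $O(1)$, which is exactly where the known pitfall lies. You also omit the reduction (Roblin's deuxi\`eme \'etape) from arbitrary $x,y$ to basepoints lying on geodesics asymptotic to $\xi_0,\eta_0$ with second endpoint in $\Lambda_\Gamma$, which is needed to guarantee the reference shadows have positive measure and to make the comparisons uniform in a neighborhood of $(\xi_0,\eta_0)$.

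Separately, your final paragraph on cusp excursions is a misdirection: the theorem assumes only that $m_\Gamma$ is finite, not geometric finiteness, and the paper's proof involves no cusp analysis, no appeal to Theorem \ref{thm:finite_BMmeas}, and no use of the critical gap of Lemma \ref{lem:critgap_para} (which presupposes a parabolic subgroup that $\Gamma$ need not have here). The shadow-lemma constants are uniform over $\Gamma$ once $r$ is large enough, so nothing degenerates; finiteness of $m_\Gamma$ enters only through the mixing statement itself.
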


This has as immediate corollaries, by integrating in one or both factors,
\begin{cor}
$\delta \|m_\Gamma\| e^{-\delta t} \sum\limits_{\substack{\gamma \in \Gamma \\ d_\Omega(x,\gamma y) \leq t}}  \dirac_{\gamma y} \to \|\mu_y\| \, \mu_x$ weakly in $C(\bar\Omega)^*$. 
\end{cor}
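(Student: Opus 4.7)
The plan is to deduce this corollary directly from Theorem \ref{thm:orbit_equidist} by pairing the weak*-convergent measures on $\bar\Omega \times \bar\Omega$ with a test function that is constant in the second factor. Concretely, given $\varphi \in C(\bar\Omega)$, define $\Phi \in C(\bar\Omega \times \bar\Omega)$ by $\Phi(a,b) := \varphi(a)$; this is continuous on the compact product space since it is a pullback of $\varphi$ under the (continuous) projection onto the first factor.

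Applying the weak* convergence of Theorem \ref{thm:orbit_equidist} to the test function $\Phi$, the left-hand pairing gives
\[
\delta \|m_\Gamma\| e^{-\delta t} \sum_{\substack{\gamma \in \Gamma \\ d_\Omega(x,\gamma y) \leq t}} \Phi(\gamma y, \gamma^{-1} x) = \delta \|m_\Gamma\| e^{-\delta t} \sum_{\substack{\gamma \in \Gamma \\ d_\Omega(x,\gamma y) \leq t}} \varphi(\gamma y),
\]
which is exactly the pairing of $\varphi$ with the measure on the left-hand side of the corollary. On the other hand, by Fubini the right-hand pairing against $\mu_x \otimes \mu_y$ equals
\[
\int_{\bar\Omega \times \bar\Omega} \varphi(a) \, d\mu_x(a) \, d\mu_y(b) = \|\mu_y\| \int_{\bar\Omega} \varphi \, d\mu_x.
\]
Since $\varphi \in C(\bar\Omega)$ was arbitrary, this is exactly weak* convergence in $C(\bar\Omega)^*$ to $\|\mu_y\| \, \mu_x$, as claimed. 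There is essentially no obstacle: the argument is the observation that partial integration of a weak*-convergent family of measures against the constant function $1$ in one factor is continuous from $C(\bar\Omega \times \bar\Omega)^*$ to $C(\bar\Omega)^*$, and the corollary referenced as "integrating in one or both factors" is just an application of this principle (with the analogous statement for $\dirac_{\gamma^{-1} x}$ and $\mu_y$ obtained symmetrically, and the scalar corollary obtained by testing against $\Phi \equiv 1$).
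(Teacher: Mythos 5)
Your proposal is correct and is exactly what the paper intends by the phrase ``by integrating in one or both factors'': one tests the weak*-convergent measures of Theorem \ref{thm:orbit_equidist} against functions of the form $\Phi(a,b)=\varphi(a)$, which are continuous on the compact product $\bar\Omega\times\bar\Omega$. Nothing further is needed.
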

\begin{cor}
$\# \left\{ \gamma \in \Gamma \st d_\Omega(x, \gamma y) \leq t \right\} \sim \frac{\|\mu_x\| \|\mu_y\|}{\delta \|m_\Gamma\|} e^{\delta t}$, i.e. the ratio of the two sides goes to 1 as $t \to \infty$.
\end{cor}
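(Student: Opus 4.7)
The plan is to derive this orbital equidistribution from mixing of the geodesic flow (Theorem \ref{thm:mixing}), following Roblin's argument in \cite[\S4]{Roblin} adapted to the Hilbert setting. The central analytic engine is mixing of $m_\Gamma$ and the central geometric input is the Sullivan shadow lemma (Lemma \ref{lem:sullivan_shadow}), which serves as a dictionary between shadow $\mu_x$-measures and exponentials of Hilbert distance. By Stone--Weierstrass we may restrict to tensor-product test functions $\phi = \psi_1 \otimes \psi_2$ with $\psi_i \in C(\bar\Omega)$; since each $\mu_x$ is supported in $\Lambda_\Gamma$ and by Proposition \ref{prop:noatoms} has no atoms, a routine approximation reduces the goal to showing, for sufficiently large $r>0$ and for $\mu_x$-almost-every $\xi_0$ and $\mu_y$-almost-every $\eta_0$ in $\Lambda_\Gamma$,
\[
    \delta \|m_\Gamma\| e^{-\delta t} \cdot \#\bigl\{\gamma \in \Gamma : d_\Omega(x, \gamma y) \leq t,\ \gamma y \in \shadow_r(x, \xi_0),\ \gamma^{-1}x \in \shadow_r(y, \eta_0)\bigr\} \longrightarrow \mu_x(\shadow_r(x, \xi_0))\,\mu_y(\shadow_r(y, \eta_0)).
\]

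The mechanism linking this count to mixing is as follows. For each relevant $\gamma$, write $s_\gamma := d_\Omega(x, \gamma y)$ and associate to $\gamma$ the flow box
\[
    \Phi_\gamma^r := \bigl\{ v \in S\Omega : \pi(v) \in B(x,r),\ g^{s_\gamma} v \in \gamma B(y,r) \bigr\}
\]
consisting of unit tangent vectors at $x$ whose geodesics pass close to $\gamma y$ at flow time $s_\gamma$. Using the Busemann-function expression for $m$ together with the shadow lemma applied at both $x$ and $y$, the $m$-mass of $\Phi_\gamma^r$ is comparable (with constants depending only on $r$) to $e^{-\delta s_\gamma}$. Summing over $\gamma$ with $s_\gamma \in [k, k+1]$ for each integer $k \in [0, t]$, the mass of the union $\bigsqcup_\gamma \Phi_\gamma^r$ (controlled by Lemma \ref{lem:roblin_1B} so that multiplicity remains bounded) is asymptotically identified via mixing with
\[
    \int_k^{k+1} m_\Gamma\bigl(SB(x,r)/\Gamma_x \cap g^{-s}(SB(y,r)/\Gamma_y)\bigr)\,ds \sim \frac{m_\Gamma(SB(x,r)/\Gamma_x)\,m_\Gamma(SB(y,r)/\Gamma_y)}{\|m_\Gamma\|}.
\]
Summing a geometric series in $k$ from $0$ to $t$ and using the shadow lemma in reverse to convert the flow-box mass back into shadow measures of size $\mu_x(\shadow_r(x,\gamma y))\,\mu_y(\shadow_r(y,\gamma^{-1}x))$ produces both the factor $e^{\delta t}/\delta$ and the target $\mu_x(\shadow_r(x,\xi_0))\,\mu_y(\shadow_r(y,\eta_0))$ after localizing $\xi_0, \eta_0$.

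The main obstacle will be the passage from the \emph{time-integrated} equidistribution provided by mixing to the \emph{pointwise} exponential orbit count: this requires a discretization-and-interpolation argument over unit flow intervals, bounding the count from above and below by slightly dilated and contracted flow boxes and checking the bounds close up as $r \to 0$. A second delicacy, more pronounced in our setting than in the CAT($-1$) case of \cite{Roblin}, is that cross-ratios and Gromov products on $\del\Omega$ do not arise from a true visual metric; consequently the approximation of continuous test functions on $\bar\Omega$ by characteristic functions of shadows must use the uniform regularity of shadows provided by strict convexity and $C^1$-smoothness of $\del\Omega$, together with the no-atom property (Proposition \ref{prop:noatoms}), rather than any direct hyperbolicity estimate. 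The critical-gap inequality (Lemma \ref{lem:critgap_para}) ensures that contributions from parabolic directions do not obstruct the exponential asymptotics even when $\Gamma$ is only assumed to admit a finite Sullivan measure.
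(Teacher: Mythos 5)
Your outline is essentially a compressed re-derivation of Theorem \ref{thm:orbit_equidist} from mixing via Roblin's flow-box and shadow machinery, whereas the paper obtains this corollary in one line from that theorem, which is proved immediately above it: since $\bar\Omega\times\bar\Omega$ is compact, the constant function $1$ belongs to $C(\bar\Omega\times\bar\Omega)$, so the weak* convergence of $\delta\|m_\Gamma\|e^{-\delta t}\sum_{d_\Omega(x,\gamma y)\leq t}\dirac_{\gamma y}\otimes\dirac_{\gamma^{-1}x}$ to $\mu_x\otimes\mu_y$ may be tested against $\varphi\equiv 1$; the left side integrates to $\delta\|m_\Gamma\|e^{-\delta t}\,\#\{\gamma \st d_\Omega(x,\gamma y)\leq t\}$ and the right side to $\|\mu_x\|\,\|\mu_y\|$, which is exactly the asserted asymptotic. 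Your longer route would work if fleshed out, but the steps you label as routine are precisely where the content of Proposition \ref{prop:1ere_etage} lives: (1) there is a tension between wanting $r$ large enough for the Sullivan shadow lemma and small enough for the cones $\cone_r^{\pm}$ to approximate shadows and test functions well --- the paper takes $r<\min\{1,\eps/30\delta\}$ and uses the shadow lemma only where boundary-based shadows have positive mass; (2) the pointwise-versus-time-integrated issue is handled by sandwiching between enlarged and contracted cones, and the lower bound requires Link's correction to Roblin's argument (Lemma \ref{lem:Link66}); and (3) passing from counts localized near $\mu$-generic $(\xi_0,\eta_0)$ to the global count needs a finite cover of the compact $\bar\Omega\times\bar\Omega$ and a subordinate partition of unity, not just almost-everywhere convergence. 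So your approach is not wrong, but it re-proves the prerequisite theorem rather than citing it; the intended proof is the one-line integration of $\varphi\equiv 1$.
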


The second corollary is most directly a counting result; the corollary before that 
is an equidistribution result for the Patterson-Sullivan measures. The theorem includes both of these statements, and is more directly related to the mixing of the Hilbert geodesic flow (Theorem \ref{thm:mixing}), as we shall see below in the proof.

These are very much in the spirit of results first formulated by Margulis in the setting of closed manifolds of constant negative curvature \cite{Margulis} and subsequently extended and generalized to much more general settings with some trace of negative curvature. We refer the interested reader to the beginning of \cite[Ch.\,4]{Roblin} for a more extended discussion of this history.

\begin{proof}[Proof of Theorem \ref{thm:orbit_equidist}]
The proof follows that of \cite[Th.\,4.1.1]{Roblin}, with minor corrections as noted in \cite[\S6 \& \S8]{Link}. We give a brief presentation of the proof here for completeness.

Let $\nu^t_{x,y}$ denote the measure $\delta \|m_\Gamma\| e^{-\delta t} \sum_{\gamma \in \Gamma} \dirac_{\gamma y} \otimes \dirac_{\gamma^{-1} x}$. To prove the desired convergence, we need to show that 
\[ \int_{\bar\Omega \times \bar\Omega} \varphi \,d\nu^t_{x,y} \to \int_{\bar\Omega \times \bar\Omega} \varphi \,d(\mu_x \otimes \mu_y) \]
as $t\to\infty$ for all $\varphi \in C(\bar\Omega \times \bar\Omega)$.

Let us give a overview of the proof before plunging into some of the details. The proof uses mixing of the geodesic flow applied to suitable geometrically-described sets: given $x \in \Omega$, $A \subset \del\Omega$, and $r>0$, define
\begin{align*}
\cone_r^+(x,A) & := \left\{ y \in \Omega \st \exists x' \in B(x,r), \xi \in A : B(y, r) \cap \,]x'\xi) \neq \varnothing \right\} \\
\cone_r^-(x,A) & := \left\{ y \in \Omega \st B(y,r) \subset \bigcap_{x' \in B(x,r)} \; \bigcup_{\xi\in A} \; ]x'\xi) \right\} 
\end{align*}
These may be thought of as expanded or contracted cones from $x$ to $A$, with the parameter $r$ controlling the expansion or contraction. We can use mixing to show that the $(\mu_x \otimes \mu_y)$-measures of sufficiently small $\bar{\mathcal{A}} \times \bar{\mathcal{B}} \subset \bar\Omega \times \bar\Omega$ are uniformly well-approximated by $\nu_{x,y}^t$-measures of corresponding products of cones over $A$ and $B$. Here ``sufficiently small'' means ``contained in one of a system of neighborhoods $\hat{V} \times \hat{W} \subset \bar\Omega \times \bar\Omega$, one for each $(\xi_0,\eta_0) \in \del\Omega \times \del\Omega$.'' 

We can then approximate, topologically and hence in measure, any sufficiently small Borel subset of $\bar\Omega \times \bar\Omega$ by products of cones. From there, using standard arguments to approximate continuous positive functions using characteristic functions, we obtain the desired convergence of integrals if we replace the domain $\bar\Omega \times \bar\Omega$ with $\hat{V} \times \hat{W}$. We are then done by taking a finite subcover of the cover of the compact $\bar\Omega \times \bar\Omega$ by these neighborhoods $\hat{V} \times \hat{W}$ and using a partition of unity subordinate to this subcover.

The technical crux of the proof is then the following
\begin{prop} \label{prop:1ere_etage}
Fix $\eps > 0$, $(\xi_0, \eta_0) \in \del\Omega \times \del\Omega$ and $x, y \in \Omega$. Then there exist $r>0$ and open neighborhoods $V$ and $W$ of $\xi_0$ and $\eta_0$ (resp.) in $\del\Omega$, such that for all Borel subsets $A \subset V, B \subset W$, we have, as $T \to +\infty$, 
\begin{align*}
\limsup \nu_{x,y}^T (\cone_r^-(x,A) \times \cone_r^-(y,B)) & \leq e^\eps \mu_x(A) \mu_y(B) \\
\liminf \nu_{x,y}^T (\cone_r^+(x,A) \times \cone_r^+(y,B)) & \geq e^{-\eps} \mu_x(A) \mu_y(B)
\end{align*}

\begin{proof}
The proof estimates the $\nu_{x,y}^T$-measure of the product of cones using a number of other geometric objects, all naturally equivariant under the isometries of $\Omega$: 
\begin{enumerate}[1.]
\item For $z \in \Omega$ and $(\xi,\eta) \in \del^2\Omega$, let $z_{\xi\eta}$ denote the point of $S\Omega$ parallel to $(\eta\xi)$ {\footnotesize (i.e. determining a geodesic with forward endpoint $\xi$)} with foot-point the nearest-point projection of $z$ onto $(\xi\eta)$. Given in addition $r > 0$ and $A \subset \del\Omega$, define 
\[ K^+(z,r,A) = \left\{ g^s z_{\xi\eta} \st -\frac{r}2 < s < \frac{r}2, (\xi,\eta) \in \del^2\Omega, \eta \in A, d_\Omega(z,(\xi\eta)) < r \right\} .\]

Inverting the role of $\xi$ and $\eta$ in the above definition yields $\iota K^+(z,r,A) =: K^-(z,r,A)$. We will also write $K(z,r)$ to denote $K^+(z,r,\del\Omega) \cup K^-(z,r,\del\Omega)$. We remark that $K(z,r) \subset SB(z,3r/2)$ by construction.

\item Given $r > 0$ and $a, b \in \Omega$ with $d_\Omega(a,b) > 2r$, we will consider the following enlarged and contracted shadows:
\begin{align*}
\shadow_r^+(a,b) & := \left\{ \xi \in \del\Omega \st \exists a' \in B(a,r):\, ]a'\xi) \cap B(b,r) \neq \varnothing \right\} \\
\shadow_r^-(a,b) & := \left\{ \xi \in \del\Omega \st \forall a' \in B(a,r):\, ]a'\xi) \cap B(b,r) \neq \varnothing \right\}
\end{align*}
When $a \to \eta \in \del \Omega$, these variant shadows have as a common limit
\[ \shadow_r(\eta,b) = \left\{\xi \in \del\Omega \st (\eta\xi) \cap B(b,r) \neq \varnothing \right\} =: \shadow_r^\pm(\eta,b) .\]

\item For $r > 0$ and $a, b \in \Omega$ with $d_\Omega(a,b) > 2r$, we denote by $\limshade_r(a,b)$ the set of $(\xi,\eta) \in \del^2 \Omega$ such that the geodesic $(\xi\eta)$ crosses first $B(a,r)$ and then $B(b,r)$. Observe that
\[ \limshade_r(a,b) \subset \shadow_r^+(b,a) \times \shadow_r^+(a,b) .\]
On the other hand, as noted in \cite{Link}, it is in general not true, even when $\Omega = \HH^2$, that \[ \shadow_r^-(b,a) \times \shadow_r^-(a,b) \subset \limshade_r(a,b) ,\]
although we do have the following
\begin{lem}[{\cite[Lem.\,6.6]{Link}}] \label{lem:Link66}
If $(\gamma y, \gamma^{-1} x) \in \cone_r^-(x,A) \times \cone_r^-(y,B)$, then
\[ \left\{ (\zeta,\xi) \in \del\Omega \times \del\Omega : \xi \in \shadow_r^-(x,\gamma y), \zeta \in \shadow_r(\xi, x) \right\} \subset \limshade_r(x, \gamma y) \cap (\gamma B \times A) .\]
\begin{proof}
This follows the proof in \cite{Link} once we establish the following claim: if $\gamma y \in \cone_r^-(x,A)$, then $\shadow_r^+(x,\gamma y) \subset A$. To see this, we note that by our (Roblin's) definition of $\cone_r^-(x,A)$, given any point $z \in B(\gamma y,r)$ {\it and any point} $w \in B(x,r)$, $z$ is on some geodesic ray starting at $w$ with endpoint (i.e. asymptotic to some point) in $A$. By the uniqueness of geodesics in our strictly convex setting, any geodesic ray starting in $B(x,r)$ and passing through $B(\gamma y,r)$ has endpoint in $A$. On the other hand, $\shadow_r^-(x,\gamma y)$ consists of all the points $\xi \in \del\Omega$ such that for every point $w \in B(x,r)$, there exists a geodesic ray starting at $w$ and intersecting $B(\gamma y,r)$ with endpoint $\xi$. As noted above, this implies $\xi \in A$. 
\end{proof} \end{lem}
\end{enumerate}

Now choose $r \in (0, \min\{1,\eps/30\delta\} )$ with $\mu_x(\del\shadow_r(\xi_0,x)) = 0 = \mu_y(\del\shadow_r(\eta_0,y))$ This is possible since for any $\xi \in \del\Omega, x \in x$, $\del\shadow_{r_1}(\xi,x) \subset \shadow_{r_0}(\xi,x)$ for $r_1 < r_0$, and $\mu(\shadow_{r_0}(\xi,x)) \to 0$ as $r_0 \to 0$ by the shadow lemma.

{\bf We first prove the result for $x,y \in \Omega$ where $x \in(\xi_0 \xi_0')$ and $y \in (\eta_0 \eta_0')$ where $\xi_0', \eta_0' \in \Lambda_\Gamma$.} Thus we have $\xi_0' \in \shadow_r(\xi_0,x)$, and similarly $\eta_0' \in \shadow_r(\eta_0,y)$; hence
\[ \mu_x(\shadow_r(\xi_0,x)) \mu_y(\shadow_r(\eta_0,y)) > 0 .\]
Take two open sets $\hat{V}, \hat{W}$ of $\bar\Omega$, containing $\xi_0, \eta_0$ respectively, and sufficiently small so that for all $a \in \hat{V}, b \in \hat{W}$, we have 
\begin{align*}
e^{-\eps/30} \mu_x(\shadow_r(\xi_0,x)) \leq \mu_x(\shadow_r^\pm(a,x)) & \leq e^{\eps/30} \mu_x (\shadow_r(\xi_0,x)) \\
e^{-\eps/30} \mu_y(\shadow_r(\eta_0,y)) \leq \mu_y(\shadow_r^\pm(b,y)) & \leq e^{\eps/30} \mu_y (\shadow_r(\eta_0,y))
\end{align*}
Choose open neighborhoods $V, W$ of $\xi_0, \eta_0$ (resp.) in $\del\Omega$, such that $\bar{V} \subset \hat{V} \cap \del\Omega$ and $\bar{W} \subset \hat{W} \cap \del\Omega$. These will be the open neighborhoods we desire.

Given Borel subsets $A \subset V$, $B \subset W$, write $K^+ = K^+(x,r,A)$ and $K^- = K^-(y,r,B)$. 
The proof proceeds by estimating asymptotically (as $T\to+\infty$) the quantity
\[ \int_0^T e^{\delta t} \sum_{\gamma \in \Gamma} m(K^+ \cap g^{-t} \gamma K^-) \,dt \]
by examining how the elements of $\Gamma$ contribute to the various parts of it; the elements which do contribute here are (up to uniformly bounded error) exactly those we are seeking to enumerate in this stage, that is elements $\gamma \in \Gamma$ such that $d_\Omega(x,\gamma y) \leq T$, $\gamma y \in \cone_1^\pm(x,A)$ and $\gamma^{-1} x \in \cone_1^\pm(y,B)$.

To see this last claim: we may verify, by recalling the definitions of $m$ and $K^\pm$, that for $\gamma \in \Gamma$ with $d_\Omega(x, \gamma y) > 2r$ we have
\[ m(K^+ \cap g^{-t} \gamma K^-) = \int \frac{d\mu_x(\xi) \,d\mu_x(\eta)}{e^{-2 \delta \grop{x}\xi\eta}} \int_{-r/2}^{r/2} \mathbf{1}_{K(\gamma y,r)} (g^{t+s} x_{\xi\eta} ) \,ds \]
where the integral is supported on $\limshade_r(x, \gamma y) \cap (\gamma B \times A)$, and note that 
\[ \int_0^{T-3r} e^{\delta t} \mathbf{1}_{K(\gamma y,r)} (g^{t+s} x_{\xi\eta} ) dt = 0 \]
if $d_\Omega(x,\gamma y) > T$.  With some work we have that, for the case where $d_\Omega(x, \gamma y) > 2r$,
\[ \int \frac{d\mu_x(\xi) \,d\mu_x(\eta)}{e^{-2 \delta \grop{x}\xi\eta}} \int_{-r/2}^{r/2} \mathbf{1}_{K(\gamma y,r)} (g^{t+s} x_{\xi\eta} ) \,ds \sim r^2 \mu_x(\shadow_r(\xi_0,x)) \mu_y(\eta_0,y)) .\]

From the strong mixing of the geodesic flow in the quotient (Theorem \ref{thm:mixing}), we have, for large enough $t$, 
\begin{equation*} e^{-\eps/3} m(K^+) m(K^-) \leq \|m_\Gamma\| \sum_{\gamma \in \Gamma} m(K^+ \cap g^{-t} \gamma K^-) \leq e^{\eps/5} m(K^+) m(K^-) . 
\end{equation*}
But now we can use the definition of $K^\pm$ to find that $m(K^\pm)$ is bounded between constant multiples of $\mu_x(A)$ or $\mu_y(B)$, with the bounding constants given in terms of shadows: for $K^+ = K(x,r,A)$ we have
\[ m(K^+) = r \int_A d\mu_x(\xi) \int_{\shadow_r(\xi,x)} e^{\delta \grop{x}\xi\zeta} d\mu_x(\zeta) .\]
Since $\grop{x}\xi\zeta \leq r$ (from Lemma \ref{lem:grop_bound}) and $A \subset V$, we obtain
\begin{equation*} e^{-\eps/30} r \mu_x(A) \mu_x(\shadow_r(\xi_0,x)) \leq m(K^+) \leq e^{\eps/10} r \mu_x(A) \mu_x(\shadow_r(\xi_0,x)) .\label{eqn:411_K+} \end{equation*}
Arguing similarly with $K^- = K^-(y,r,B)$, we obtain
\begin{equation*} e^{-\eps/30} r \mu_y(B) \mu_y(\shadow_r(\eta_0,y)) \leq m(K^-) \leq e^{\eps/10} r \mu_y(B) \mu_y(\shadow_r(\eta_0,y)) . \label{eqn:411_K-} \end{equation*}

Thus, to outline, we have, for all sufficiently large $T$,
\[ \int_0^T e^{\delta t} \sum_{\gamma \in \Gamma} m(K^+ \cap g^{-t} \gamma K^-) \,dt \sim \frac{M}{\delta \|m_\Gamma\|} e^{\delta T} \mu_x(A) \mu_y(B) \]
from mixing on the one hand, where $M:= r^2 \mu_x(\shadow_r(\xi_0,x)) \mu_y(\shadow_r(\eta_0,y))$, and 
\[ \int_0^T e^{\delta t} \sum_{\gamma \in \Gamma} m(K^+ \cap g^{-t} \gamma K^-) \,dt \sim \frac{M}{\delta \|m_\Gamma\|} e^{\delta T} \nu_{x,y}^T(\cone_1^\pm(x,A) \times \cone_1^\pm(y,B)) \]
from our earlier arguments based in geometric considerations on the other. 
Modulo (many!) careful details, for which we refer the interested reader to the proof of \cite[Th.\,4.1.1]{Roblin} (Premi\`ere \'etape), or \cite[\S8]{Link} for the amended proof of the lower bound, this establishes the lemma (with $r=1$) in the case when $x \in(\xi_0 \xi_0')$ and $y \in (\eta_0 \eta_0')$ where $\xi_0', \eta_0' \in \Lambda_\Gamma$.

{\bf For more general $x,y \in \Omega$, not satisfying this restriction relative to $\xi_0,\eta_0$}, we can reduce to the previous case as follows: choose $\zeta_0 \in \Lambda_\Gamma \smallsetminus \{\xi_0,\eta_0\}$, and $x_0\in (\xi_0\zeta_0)$ and $y_0 \in (\eta_0\zeta_0)$. 

From the previous step we have neighborhoods $V_0, W_0$ of $\xi_0, \eta_0$ (respectively) such that the result of the lemma holds for $x_0$ and $y_0$ in the place of $x$ and $y$ and $V_0$ and $W_0$ in the place of $V$ and $W$; we can then relate the orbit of $y$ seen from $x$ to that of $y_0$ seen from $x_0$, and expanding $r$ slightly this will will again establish the lemma. For the details of this step, we refer the interested reader to the proof of \cite[Th.\,4.1.1]{Roblin} (Deuxi\`eme \'etape).
\end{proof} \end{prop}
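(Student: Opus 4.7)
The plan is to follow the Margulis--Roblin strategy of deducing orbital equidistribution from measure-theoretic mixing of the Hilbert geodesic flow (Theorem \ref{thm:mixing}). Writing $\nu_{x,y}^T := \delta \|m_\Gamma\| e^{-\delta T}\sum_{\gamma: d_\Omega(x,\gamma y) \leq T} \dirac_{\gamma y} \otimes \dirac_{\gamma^{-1}x}$, I want to show $\int_{\bar\Omega \times \bar\Omega} \varphi \,d\nu_{x,y}^T \to \int \varphi\, d(\mu_x \otimes \mu_y)$ for every $\varphi \in C(\bar\Omega \times \bar\Omega)$. Since $\bar\Omega \times \bar\Omega$ is compact, a partition-of-unity argument reduces this to a local statement: for each $(\xi_0,\eta_0) \in \del\Omega \times \del\Omega$ and each $\eps > 0$, I must produce open neighborhoods $\hat V \ni \xi_0$ and $\hat W \ni \eta_0$ in $\bar\Omega$ such that the desired weak limit holds up to a factor $e^{\pm\eps}$ when tested against continuous functions supported in $\hat V \times \hat W$.

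The geometric core of the argument is to approximate $\nu_{x,y}^T$-measures of products of \emph{cones} $\cone_r^\pm(x,A) \times \cone_r^\pm(y,B)$, for Borel $A \subset \hat V \cap \del\Omega$ and $B \subset \hat W \cap \del\Omega$, by $\mu_x(A)\mu_y(B)$. To this end I construct flow-box neighborhoods $K^+(x,r,A), K^-(y,r,B) \subset S\Omega$ consisting of unit tangent vectors whose foot-point lies within $r$ of $x$ (resp.\ $y$), whose forward (resp.\ backward) endpoint lies in $A$ (resp.\ $B$), and with a length-$r$ slack along the flow direction. A direct computation using $dm = e^{2\delta\grop{x}{\xi}{\eta}}\,d\mu_x(\xi)\,d\mu_x(\eta)\,ds$, Lemma \ref{lem:grop_bound} (bounding $\grop{x}{\xi}{\eta} \leq r$ for geodesics meeting $B(x,r)$), and the Sullivan shadow lemma yields $m(K^+) \asymp r\,\mu_x(A)\,\mu_x(\shadow_r(\xi_0,x))$, with multiplicative error $e^{\pm\eps/10}$ once $\hat V$ is small enough, and analogously for $m(K^-)$. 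The crucial observation is that a $\gamma \in \Gamma$ contributes to $m(K^+ \cap g^{-t}\gamma K^-)$ precisely when $d_\Omega(x,\gamma y) \in [t-3r,t]$, $\gamma y \in \cone_r^\pm(x,A)$, and $\gamma^{-1}x \in \cone_r^\pm(y,B)$, each such $\gamma$ contributing a quantity comparable to $r^2 \mu_x(\shadow_r(\xi_0,x))\mu_y(\shadow_r(\eta_0,y))$. Applying mixing to $\mathbf{1}_{\bar K^+}$ and $\mathbf{1}_{\bar K^-}$ on $S\Omega/\Gamma$ gives
\[
\sum_{\gamma \in \Gamma} m(K^+ \cap g^{-t}\gamma K^-) \longrightarrow \frac{m(K^+)\,m(K^-)}{\|m_\Gamma\|} \quad \text{as } t \to \infty.
\]
Multiplying by $e^{\delta t}$, integrating over $t \in [0,T]$, and comparing the two resulting asymptotics produces $\nu_{x,y}^T(\cone_r^\pm(x,A) \times \cone_r^\pm(y,B))\sim \mu_x(A)\mu_y(B)$ up to $e^{\pm\eps}$ after cancelling the common shadow factors.

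To finish, I approximate arbitrary Borel products in $\hat V \times \hat W$ by products of cones (possible since cones of varying radius form a neighborhood basis modulo $\mu$-null boundaries, using Proposition \ref{prop:noatoms} to rule out atoms at the sandwiching boundary points), then approximate continuous $\varphi$ by simple functions on these products, and reassemble with the partition of unity. I expect two main obstacles. First, the convergence $\sum_\gamma m(K^+ \cap g^{-t}\gamma K^-) \to \|m_\Gamma\|^{-1} m(K^+)m(K^-)$ must be uniform enough in $t$ to survive multiplication by $e^{\delta t}$ and integration over $[0,T]$, which will require a dominated-convergence control on the tail contributions---this is the technical crux. Second, the argument needs $\mu_x(\shadow_r(\xi_0,x))$ and $\mu_y(\shadow_r(\eta_0,y))$ to be positive, which holds naturally only when $x \in (\xi_0 \xi_0')$ and $y \in (\eta_0 \eta_0')$ for some auxiliary $\xi_0',\eta_0' \in \Lambda_\Gamma$; the general $x,y$ case will require an auxiliary basepoint change $x \mapsto x_0 \in (\xi_0\zeta_0)$, $y \mapsto y_0 \in (\eta_0\zeta_0)$ for a third limit point $\zeta_0$, and transferring orbital counts between basepoints via bounded multiplicative error.
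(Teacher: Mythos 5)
Your proposal follows essentially the same route as the paper's proof (which itself closely tracks Roblin's Th.\,4.1.1 and Link's corrections): the same flow boxes $K^\pm$, the same two-sided estimation of $\int_0^T e^{\delta t}\sum_\gamma m(K^+\cap g^{-t}\gamma K^-)\,dt$ via mixing on one hand and geometric counting of contributing $\gamma$ on the other, the same shadow-measure normalizations, and the same reduction of general $x,y$ to the case where they lie on geodesics with endpoints in $\Lambda_\Gamma$. You also correctly identify the technical crux (uniformity of the mixing convergence against the $e^{\delta t}$ weight), which the paper likewise defers to Roblin and Link rather than carrying out in full.
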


To complete the proof of the theorem: let $x, y \in \Omega$ and fix $\eps > 0$.
Let $V$ and $W$ be the open sets from Lemma \ref{prop:1ere_etage} for some $(\xi_0,\eta_0) \in \del\Omega \times \del\Omega$, and take two open sets $\hat{V}, \hat{W}$ of $\bar\Omega$ such that $\hat{V} \cap \del\Omega = V$, $\hat{W} \cap \del\Omega = W$, and consider two Borel subsets $\mathcal{A}, \mathcal{B}$ of $\bar\Omega$ such that $\bar{\mathcal{A}} \subset \hat{V}$ and $\bar{\mathcal{B}} \subset \hat{W}$, and such that $(\mu_x \otimes \mu_y)(\del(\mathcal{A} \times \mathcal{B})) = 0$.

Using Proposition \ref{prop:1ere_etage} and arguing as in \cite[Th.\,4.1.1]{Roblin} (Troisi\`eme \'etape), we have that
\begin{align*}
\limsup \nu_{x,y}^t(\mathcal{A} \times \mathcal{B}) & \leq e^\eps \mu_x(\mathcal{A}) \mu_y(\mathcal{B}) ,\\
\liminf \nu_{x,y}^t(\mathcal{A} \times \mathcal{B}) & \geq e^{-\eps} \mu_x(\mathcal{A}) \mu_y(\mathcal{B}) .
\end{align*} 
One may deduce from these inequalities that 
for all positive continuous functions $h$ supported on $\hat{V} \times \hat{W}$, we have
\[ e^{-\eps} \int \varphi \,d(\mu_x \otimes \mu_y) \leq \liminf \int \varphi \,d\nu_{x,y}^t \leq \limsup \int \varphi \,d\nu_{x,y}^t \leq e^\eps \int \varphi \,d(\mu_x \otimes \mu_y) .\]

Now, by compactness, there is a finite cover of $\bar\Omega \times \bar\Omega$ by open sets of the form $\hat{V} \times \hat{W}$. By using a partition of unity subordinate to this finite cover, we see that this last set of inequalities remains valid for all continuous positive functions on $\bar\Omega \times \bar\Omega$.

It then remains only to take $\eps \to 0$ to obtain the desired convergence of integrals. This conclude the proof.
\end{proof}

\section{Equidistribution of primitive closed geodesics} \label{sec:pcgeod_equidist}

In this section we prove an equidistribution result for primitive closed geodesics, again with counting results for such geodesics as a consequence:
\begin{thm}[cf. {\cite[Th.\,5.1.1 \& 5.2]{Roblin}}] \label{thm:pcgeod_equidist}
Suppose $\Omega$ is a strictly convex projective domain with $C^1$ boundary and $\Gamma \leq \Aut(\Omega)$ is a non-elementary discrete subgroup such that $S \Omega / \Gamma$ admits a finite Sullivan measure $m_\Gamma$ associated to a $\Gamma$-equivariant conformal density $\mu$ of dimension $\delta(\Gamma)$.

\begin{enumerate}[(a)]
\item As $\ell \to +\infty$,
\[ \delta \ell e^{-\delta \ell} \sum_{g \in \mathcal{G}_\Gamma(\ell)} \dirac_g \to \frac{m_\Gamma}{\|m_\Gamma\|} \]
weakly in $C_c(S\Omega/\Gamma)^*$;
\item If furthermore $\Gamma$ acts geometrically finitely on $\Omega$, we have the same convergence in $C_b(S\Omega / \Gamma)^*$.
\end{enumerate}
\end{thm}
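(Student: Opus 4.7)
The plan is to follow Roblin's proof of \cite[Th.\,5.1.1 \& 5.2]{Roblin}, with our orbital equidistribution Theorem \ref{thm:orbit_equidist} as the principal input. By a partition-of-unity / box decomposition argument (parallel to the last step of the proof of Theorem \ref{thm:orbit_equidist}), the convergence in (a) reduces to testing against characteristic-like functions of ``box'' neighborhoods $\mathcal{U} \subset S\Omega/\Gamma$ whose lifts to $S\Omega = \del^2\Omega \times \real$ have the form $U^- \times U^+ \times J$, with $U^\pm \subset \del\Omega$ small open sets having $\mu_x$-null boundaries and $J \subset \real$ a bounded interval.

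Each primitive closed geodesic $g \in \mathcal{G}_\Gamma(\ell)$ lifts to the axis of a primitive hyperbolic $\gamma \in \Gamma$, unique up to conjugation, with $\ell(g) = \ell(\gamma)$; the lifted axis meets $U^- \times U^+ \times J$ exactly when $(\gamma^-, \gamma^+) \in U^- \times U^+$ and the axis crosses the slice $J$. Unfolding the conjugacy class as a $\Gamma/\langle\gamma\rangle$-orbit recasts the sum over $\mathcal{G}_\Gamma(\ell)$ as a sum over such $\gamma \in \Gamma$ with translation length at most $\ell$. The technical core is a closing-type argument: for basepoints $x, y \in \Omega$ on a geodesic crossing $J$, once the cone parameter $r > 0$ is small enough, any $\gamma \in \Gamma$ with $\gamma y \in \cone_r^+(x, U^+)$ and $\gamma^{-1} x \in \cone_r^+(y, U^-)$ must be hyperbolic, have $\gamma^\pm$ near $U^\pm$, and satisfy $\ell(\gamma) = d_\Omega(x, \gamma y) + O(r)$. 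Theorem \ref{thm:orbit_equidist} applied to the indicators of these cones then yields the asymptotic
\[ \#\left\{\gamma : \gamma y \in \cone_r^+(x, U^+),\, \gamma^{-1}x \in \cone_r^+(y, U^-),\, d_\Omega(x, \gamma y) \leq \ell\right\} \sim \frac{\mu_x(U^+)\,\mu_y(U^-)}{\delta \|m_\Gamma\|}\, e^{\delta \ell}, \]
while non-primitive contributions $\gamma_0^n$ with $n \geq 2$ satisfy $\ell(\gamma_0^n) \geq 2\ell(\gamma_0)$ and so are bounded by the primitive count up to length $\ell/2$, namely $O(e^{\delta\ell/2})$. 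Multiplying by the slice length $|J|$ and accounting for the normalization of $\dirac_g$ (which weights $g$ by $|J|/\ell(g)$ per crossing of the box) produces the weak convergence against box indicators, hence part (a).

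For part (b), where $\varphi$ is only bounded continuous, I would split $\varphi = \varphi_K + \varphi_\infty$ with $\varphi_K$ compactly supported on the convex core and $\varphi_\infty$ supported in cusp neighborhoods $SH_p/\Pi_p$, $p \in \mathcal{P}$. Part (a) handles $\varphi_K$. For $\varphi_\infty$ I would mimic the cusp estimates in the proof of Theorem \ref{thm:finite_BMmeas}: the structure of standard parabolic regions, the critical-exponent gap $\delta_\Gamma > \delta_{\Pi_p}$ from Lemma \ref{lem:critgap_para}, and the shadow lemma together bound uniformly the weighted contribution of geodesic arcs lying deep inside each cusp to $\delta\ell e^{-\delta\ell} \sum_g \dirac_g$, showing it vanishes as the cusp depth grows, in lock-step with the analogous decay of $m_\Gamma$ on cusp ends.

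The main obstacle is the closing step: identifying orbital elements with honest hyperbolic elements having controlled axes and translation length is delicate without uniform negative curvature. When the cusps are asymptotically hyperbolic one may invoke the uniform hyperbolicity on $\widetilde{\mathtt{NW}}$ from \cite[Th.\,5.2]{CM14} to supply the necessary stable/unstable contraction; more generally, strict convexity together with the shadow estimates of \S\ref{subsec:shadows} must be used directly to promote the coarse segment $[x, \gamma y]$ to the genuine axis of $\gamma$. The cusp uniformity needed for (b) is the secondary difficulty, closely mirroring the finiteness proof for $m_\Gamma$.
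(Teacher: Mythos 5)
Your overall strategy is the same as the paper's (both follow Roblin): deduce (a) from Theorem \ref{thm:orbit_equidist} by localizing to flow boxes, converting orbit points $\gamma^{\mp1}x$ into fixed points $\gamma^{\pm}$ and axes of hyperbolic elements, and disposing of non-hyperbolic and non-primitive contributions; and deduce (b) by a cusp estimate in the spirit of the finiteness proof of Theorem \ref{thm:finite_BMmeas}. The presentation differs only cosmetically: you use boxes $U^-\times U^+\times J$ and counting functions, while the paper uses neighborhoods $V(x,r)$ of pairs of endpoints whose geodesic meets $B(x,r)$ and a chain of measures $\nu_{x,1}^L,\nu_{x,2}^L,\nu_{x,3}^L,\Em_{x,3}^L$ interpolating between the orbital measure and $\Epsy^L$.

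The one genuine gap is exactly the step you flag as the ``main obstacle'': the closing lemma. You leave it unestablished and suggest that in general one might need uniform hyperbolicity of the flow, hence asymptotically hyperbolic cusps --- an extra hypothesis absent from the statement, which would weaken the theorem. The paper resolves this with an elementary argument (Lemma \ref{lem:511}) requiring neither curvature estimates nor geometric finiteness: if $\phi$ is elliptic or parabolic then $x$, $\phi^{-1}x$, $\phi x$ lie on a common metric sphere or horosphere, and by strict convexity of balls and horoballs the chord $(\phi^{-1}x\,\phi x)$ avoids any fixed ball $\bar{B}(x,r)$ once $d_\Omega(x,\phi x)$ is large; hence any $\phi$ with $(\phi^{-1}x\,\phi x)\cap\bar{B}(x,r)\neq\varnothing$ and $d_\Omega(x,\phi x)$ large is hyperbolic, and the identity $e^{-\grop{x}{\phi^{\pm1}x}{\phi^\pm}}=\exp(-\tfrac12 d_\Omega(x,\phi^{\pm1}x)-\tfrac12\tau(\phi))$ gives both the convergence of $\phi^{\pm1}x$ to $\phi^{\pm}$ and the comparison of $\ell(\phi)$ with $d_\Omega(x,\phi x)$. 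You should supply this; no stable/unstable contraction is needed. For (b) your sketch points at the right mechanism, but ``vanishes as the cusp depth grows, in lock-step with the decay of $m_\Gamma$'' needs to be made quantitative and uniform in $L$: the paper bounds $\Leb_\gamma(H(r))\leq d_\Omega(x,\pi x)+\kappa-2r$ for each excursion, counts $\#\Gamma(L,\pi,r)\lesssim e^{\delta L-\delta d_\Omega(x,\pi x)}$ via the shadow lemma and Lemma \ref{lem:roblin_1B}, and is then left, after multiplying by $\delta e^{-\delta L}$, with a tail of the convergent series $\sum_{\pi\in P}d_\Omega(x,\pi x)e^{-\delta d_\Omega(x,\pi x)}$ over $\{\pi\in P: d_\Omega(x,\pi x)>2r-\kappa\}$, which tends to $0$ as $r\to+\infty$ by Lemma \ref{lem:critgap_para}.
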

Here $\mathcal{G}_\Gamma(\ell)$ denotes the set of primitive closed geodesics of length at most $\ell$, and for $g \in \mathcal{G}_\Gamma = \lim_{\ell\to\infty} \mathcal{G}_\Gamma(\ell)$ any primitive closed geodesic $\dirac_g$ denotes the normalized Lebesgue measure supported on $g$. $C_c(S\Omega / \Gamma)^*$ and $C_b(S\Omega / \Gamma)^*$ denote the weak* duals of, respectively, the space of compactly-supported continuous functions on $S\Omega / \Gamma$, and
the space of {\it bounded} continuous functions on $S\Omega / \Gamma$. 
\begin{cor}[cf. {\cite[Cor.\,5.3]{Roblin}}]
$\#\mathcal{G}_\Gamma(\ell) \sim \frac{e^{\delta\ell}}{\delta\ell}$ as $\ell\to+\infty$.
\begin{proof}
Integrating the constant function 1 against the measure $\delta \ell e^{-\delta \ell} \sum_{g \in \mathcal{G}_\Gamma(\ell)} \dirac_g$ gives $\#\mathcal{G}_\Gamma(\ell)$. From Theorem \ref{thm:pcgeod_equidist}, this integral converges to 1 as $ \ell \to \infty$. 
\end{proof}
\end{cor}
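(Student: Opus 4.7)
The plan is to extract the counting asymptotic by integrating the constant function $\mathbf{1}$ against both sides of the measure convergence in Theorem \ref{thm:pcgeod_equidist}. Since $\dirac_g$ is the \emph{normalized} Lebesgue measure on the closed geodesic $g$, we have $\int \mathbf{1}\,d\dirac_g = 1$, and therefore
\[
\int_{S\Omega/\Gamma} \mathbf{1}\, d\!\left( \delta \ell e^{-\delta\ell} \sum_{g \in \mathcal{G}_\Gamma(\ell)} \dirac_g \right) = \delta \ell e^{-\delta\ell} \cdot \#\mathcal{G}_\Gamma(\ell).
\]
On the other hand, $\int \mathbf{1}\, d(m_\Gamma/\|m_\Gamma\|) = 1$ since the right-hand side is a probability measure. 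So once we are allowed to test the convergence of Theorem \ref{thm:pcgeod_equidist} against $\mathbf{1}$, we immediately get $\delta\ell e^{-\delta\ell} \cdot \#\mathcal{G}_\Gamma(\ell) \to 1$, which rearranges to the stated asymptotic $\#\mathcal{G}_\Gamma(\ell) \sim \frac{e^{\delta\ell}}{\delta\ell}$.

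The one subtlety — and the only real obstacle — is that $\mathbf{1}$ is not compactly supported when $S\Omega/\Gamma$ is non-compact, so part (a) of Theorem \ref{thm:pcgeod_equidist}, which gives convergence only in $C_c(S\Omega/\Gamma)^*$, is insufficient on its own. The way around this is to invoke part (b) of the theorem: under the geometric finiteness hypothesis (which is available here, since Theorem \ref{thm:finite_BMmeas} is precisely what guarantees the finite Sullivan measure that Theorem \ref{thm:pcgeod_equidist} takes as input), the convergence upgrades to $C_b(S\Omega/\Gamma)^*$, and $\mathbf{1} \in C_b(S\Omega/\Gamma)$ is a legitimate test function. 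Evaluating at $\mathbf{1}$ then yields the result as described. (In the cocompact case the same argument goes through using part (a) directly, since then $\mathbf{1}$ is compactly supported.)
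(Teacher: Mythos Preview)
Your proof is correct and follows the same approach as the paper: integrate the constant function $\mathbf{1}$ against both sides of the measure convergence in Theorem \ref{thm:pcgeod_equidist} and read off the asymptotic. You are in fact more careful than the paper's terse proof in that you explicitly flag the need for part (b) (convergence in $C_b^*$) rather than part (a) to justify testing against $\mathbf{1}$.
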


As above, these results extend and are inspired by results first proven in the context of closed manifolds of constant negative curvature \cite{Margulis} and subsequently extended to much more general settings; we refer the interested reader to the beginning of \cite[Ch.\,5]{Roblin} for a more extended version of this history.

\begin{proof}[Proof of Theorem \ref{thm:pcgeod_equidist}(a)]
Denote by $\Epsy^L$ the measure on $S\Omega$ which descends to $\delta L e^{-\delta L} \sum_{g \in \mathcal{G}_\Gamma(L)} \dirac_g$ on $S\Omega/\Gamma$. We need to prove that $\Epsy^L \to \frac{m}{\|m_\Gamma\|}$ weakly in $C_c(S\Omega)^*$ when $L\to+\infty$; we remind the reader here that $m$ is the measure on $S\Omega$ which descends to $m_\Gamma$ on $S\Omega/\Gamma$. 
Abusing notation slightly, we write $\mu$ to also denote the measure on $\del^2\Omega$ given by
\[ d\mu(\xi,\eta) = e^{ \delta \cdot (\beta_\xi(x,u) + \beta_\eta(x,u))} \,d\mu_x(\xi) \,d\mu_x(\eta) = e^{2\delta\cdot \grop{x}\xi\eta} d\mu_x(\xi) d\mu_x(\eta) \]
which is independent of the choice of $x \in \Omega$ and $u \in (\xi\eta) \subset \Omega$. We recall again that by definition, $m = \mu \otimes ds$ on $S\Omega = \del^2\Omega \times \real$.

We will first use Theorem \ref{thm:orbit_equidist} to obtain a measure $\nu_{x,1}^L$ converging weakly to $\mu$ when $L\to+\infty$, then successively modify $\nu_{x,1}^L$ to form $\nu_{x,2}^L$ and $\nu_{x,3}^L$, so that $\nu_{x,3}^L$ will be supported on pairs of fixed points of hyperbolic elements, and that $\nu_{x,3}^L$ locally approaches $\mu$. By taking the product of $\|m_\Gamma\|^{-1} \nu_{x,3}^L$ with the Lebesgue measure on $\real$, we obtain a measure $\Em_{x,3}^L$ approaching $\|m_\Gamma\|^{-1} m$ locally (i.e. near the fibre over $x\in\Omega$ in $S\Omega$.) To finish, we relate $\Em_{x,3}^L$ 
to the measure of equidistribution $\Epsy^L$. 

Fix for now $x \in \Omega$ and $r > 0$, and let $V(x,r)$ denote the open set of pairs $(a,b) \in \Omega^2 \cup \del^2\Omega$ such that the geodesic $(a,b)$ intersects $B(x,r)$. By Theorem \ref{thm:orbit_equidist}, the measure
\[ \nu_{x,1}^L := \delta \|m_\Gamma\| e^{-\delta L} \sum_{d_\Omega(x,\gamma x) \leq L} \dirac_{\gamma^{-1}x} \otimes \dirac_{\gamma x} \]
converges weakly in $C(\bar\Omega \times \bar\Omega)^*$ to $\mu_x \otimes \mu_x$ as $L\to+\infty$. We restrict henceforth these measures to the open set $V(x,r)$, that is to say that we will consider them as elements of $C_c(V(x,r))^*$. Since $0 \leq \grop{x}\xi\eta \leq r$ for $(\xi,\eta) \in \del^2\Omega \cap V(x,r)$, we have, for all $\psi \in C_c^+(V(x,r))$, 
\[ e^{-2\delta r} \int \psi\,d\mu \leq \liminf \int \psi\,d\nu_{x,1}^L \leq \limsup \int \psi\,d\nu_{x,1}^L \leq \int \psi\,d\mu \]
as $L\to+\infty$.

Now as we start to modify our measure, we will make use of the following geometric lemma:
\begin{lem}[cf. {\cite[p.\,68]{Roblin}}] \label{lem:511}
Let $x \in \Omega$, and fix $r > 0$ and $\eps > 0$.

There exists $t_0 = t_0(x,r,\eps)$ such that if $\phi \in \Isom(\Omega,d_{\Omega})$ satisfies $d_\Omega(x, \phi x) > t_0$ and $(\phi^{-1} x , \phi x)\cap \bar{B}(x,r) \neq \varnothing$, then $\phi$ is hyperbolic, and $e^{-\grop{x}{\phi^{\pm1} x}{\phi^\pm}} < \eps$ where $\phi^\pm$ denote the attracting / repelling fixed points of $\phi$. 
\end{lem}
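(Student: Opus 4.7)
The plan is to use the Gromov-product hypothesis as an approximate-axis condition for $\phi$, extract a linear lower bound on $d_\Omega(x,\phi^n x)$, and then conclude first that $\phi$ must be hyperbolic and second that its fixed points are close to $\phi^{\pm 1}x$ in the relevant visual sense. By Lemma~\ref{lem:grop_bound}, the hypothesis $(\phi^{-1}x,\phi x)\cap\bar B(x,r)\neq\varnothing$ gives $\grop{x}{\phi^{-1}x}{\phi x}\leq r$, equivalently $d_\Omega(\phi^{-1}x,\phi x)\geq 2d_\Omega(x,\phi x)-2r$; by $\phi$-equivariance the same inequality holds at every vertex of the orbit polygon $(\phi^n x)_{n\in\ints}$, so this polygon has uniformly small Gromov-product ``deficit'' at each vertex.

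The key step is then the local-to-global quasigeodesic estimate
\[ d_\Omega(x,\phi^n x) \geq n\bigl(d_\Omega(x,\phi x)-2r\bigr)+2r \quad \text{for all } n\geq 1, \]
valid once $d_\Omega(x,\phi x)>2r$. In Roblin's CAT($-1$) setting this is the standard Morse-type argument; in our strictly convex Hilbert setting I plan to argue by induction using the nearest-point projections $y_k=\pi_{(\phi^{k-1}x,\phi^{k+1}x)}(\phi^k x)$ (well-defined on the nose by strict convexity of metric balls, cf.\ \S\ref{sec:hilbgeom}), each satisfying $d_\Omega(y_k,\phi^k x)\leq r$, and chaining the segments $[\phi^{k-1}x,y_k,\phi^{k+1}x]$ through the orbit using strict convexity.

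With this linear escape rate, $d_\Omega(x,\phi^n x)\to\infty$ linearly, which rules out elliptic $\phi$ (whose orbit would be bounded on a sphere around its interior fixed point) and parabolic $\phi$ (whose orbit would lie on a horosphere and, by the analysis of cusps in strictly convex Hilbert spaces of Cooper--Long--Tillmann and Crampon--Marquis, grow only logarithmically in Hilbert distance). Hence $\phi$ is hyperbolic with distinct attracting/repelling fixed points $\phi^\pm\in\del\Omega$, and its translation length satisfies $\ell(\phi)=\lim_n d_\Omega(x,\phi^n x)/n \geq d_\Omega(x,\phi x)-2r$.

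Finally, since $\phi^n x\to\phi^+$ and $\beta_{\phi^+}(x,\phi x)=\ell(\phi)$,
\[ 2\grop{x}{\phi x}{\phi^+} = d_\Omega(x,\phi x)+\beta_{\phi^+}(x,\phi x) \geq 2d_\Omega(x,\phi x)-2r, \]
and symmetrically for $\phi^{-1}x$ and $\phi^-$; choosing $t_0$ with $e^{-(t_0-r)}<\eps$ yields the desired bound. The main obstacle is the local-to-global quasigeodesic estimate in the full generality of strictly convex Hilbert geometries: in the divisible or geometrically finite cases it follows from the Gromov hyperbolicity of the convex hull (Theorem~1.9), but for a general isometry $\phi$ one must instead rely on the specific Finsler/strict-convexity structure and the cleanness of nearest-point projection rather than an abstract $\delta$-hyperbolic bound.
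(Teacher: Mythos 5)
Your argument hinges on a local-to-global quasigeodesic estimate, $d_\Omega(x,\phi^n x) \geq n(d_\Omega(x,\phi x)-2r)+2r$, which you propose to prove by chaining nearest-point projections ``using strict convexity.'' This is the genuine gap, and you have in effect conceded it yourself in your final sentence. Such a statement --- a broken path with uniformly bounded Gromov-product deficit at each vertex is a global quasigeodesic --- is a quantitative hyperbolicity property: it fails in normed planes, and general strictly convex domains with $C^1$ boundary (the setting of the lemma, which moreover concerns an arbitrary $\phi \in \Isom(\Omega,d_\Omega)$, not an element of a geometrically finite group) need not be Gromov-hyperbolic and can contain arbitrarily fat triangles. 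Strict convexity of metric balls gives you a well-defined projection $y_k$ with $d_\Omega(y_k,\phi^k x)\leq r$, but no uniform divergence of geodesics with which to chain the segments; so the induction you sketch does not go through, and the cases $n=1,2$ that you verify are exactly the cases that do not yet require any chaining.

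The paper's proof avoids this machinery entirely, and it is worth seeing why neither half of the conclusion needs it. For the hyperbolicity claim, it uses the classification of isometries: if $\phi$ were elliptic or parabolic, then $x$, $\phi^{-1}x$, $\phi x$ would lie on a common metric sphere or horosphere, and the (strict) convexity of the corresponding ball or horoball forces the chord $(\phi^{-1}x,\phi x)$ away from $\bar B(x,r)$ once $d_\Omega(x,\phi x)$ is large --- a soft statement for fixed $x$ and $r$, made uniform by compactness, with no orbit iteration. For the Gromov-product claim, one has the exact identity $\grop{x}{\phi x}{\phi^+} = \tfrac12\left(d_\Omega(x,\phi x)+\beta_{\phi^+}(x,\phi x)\right) = \tfrac12\left(d_\Omega(x,\phi x)+\tau(\phi)\right)$, and since $\tau(\phi)\geq 0$ this already gives $e^{-\grop{x}{\phi x}{\phi^+}} \leq e^{-d_\Omega(x,\phi x)/2} < \eps$ for $t_0 > 2\log(1/\eps)$; your sharper bound $\tau(\phi)\geq d_\Omega(x,\phi x)-2r$ is not needed, so the quasigeodesic estimate buys you nothing even where your argument invokes it. (Your identity $2\grop{x}{\phi x}{\phi^+}=d_\Omega(x,\phi x)+\beta_{\phi^+}(x,\phi x)$ and the relation $\beta_{\phi^+}(x,\phi x)=\tau(\phi)$ are correct and are essentially the paper's computation; it is only the route to hyperbolicity and to the lower bound on $\tau(\phi)$ that is unsupported.)
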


Informally: we can identify hyperbolic isometries $\phi$ by looking for translation-like behavior, and these exhibit North-South dynamics---for such $\phi$ and $x$ close enough to the axis of $\phi$ with $(x, \phi x)$ sufficiently large, $\phi^\pm x$ get arbitrarily close to $\phi^\pm$.)

\begin{proof}
Suppose $(\phi^{-1} x \, \phi x)\cap \bar{B}(x,r) \neq \varnothing$. 
If $\phi$ is parabolic (elliptic, resp.), then $x, \phi^{-1} x, \phi x$ are all situated on a single horosphere (a circle, which tends to a horosphere as the radius increases), and so $d_\Omega(x, \phi x)$ cannot be too large without violating the condition that $(\phi^{-1} x , \phi x)\cap \bar{B}(x,r) \neq \varnothing$. Thus, if the distance is larger than a certain $t_0$ (depending on $x, r, \eps$), then $\phi$ must necessarily be hyperbolic, and hence, in this case, is positive proximal, i.e. has a largest eigenvalue which is positive and of multiplicity one (see e.g. \cite[Prop.\,2.8]{CLT}). 

Now $e^{-\grop{x}{\phi^{\pm1}x}{\phi^\pm}} = e^{-\frac12 d_\Omega(x,\phi^{\pm1}x)} e^{\frac 12 \beta_{\phi^\pm}(x,\phi^{\pm1} x)} = \exp(-\frac 12 d_\Omega(x, \phi^{\pm1} x) - \frac 12\tau(\phi))$, where $\tau(\phi)$ denotes the translation distance of $\phi$. We note that this is bounded above by $e^{-\frac12 d_\Omega(x,\phi^\pm x)}$, which clearly tends to 0 as $d_\Omega(x, \phi x) = d_\Omega(\phi^{-1} x, x) \to \infty$. 
\end{proof} 

Write $\Gamma_h$ to denote the set of hyperbolic elements of $\Gamma$. According to Lemma \ref{lem:511}, if $\gamma\in\Gamma$ is elliptic or parabolic, i.e. if $\gamma\notin\Gamma_h$, then $(\gamma^{-1}x, \gamma x) \notin V(x,R)$ as soon as $d_\Omega(x, \gamma x)$ is large enough, i.e. for all but finitely many $\gamma$. As a consequence, if we define the measure
\[ \nu_{x,2}^L := \delta \|m_\Gamma\| e^{-\delta L} \sum_{\substack{\gamma\in\Gamma_h \\ d_\Omega(x,\gamma x) \leq L}} \dirac_{\gamma^{-1}x} \otimes \dirac_{\gamma x} \]
we have $\nu_{x,1}^L - \nu_{x,2}^L \to 0$ as $L\to+\infty$ (still restricting to $V(x,r)$.)

For $\gamma \in \Gamma_h$, write $\gamma^\pm$ for its attracting and repelling fixed points. By Lemma \ref{lem:511}, if $(\gamma^{-1} x, \gamma x) \in V(x,r)$, then $\gamma^\pm x$ are uniformly arbitrarily close to $\gamma^\pm$ as long as $d_\Omega(x,\gamma x)$ is large enough, which is to say for all but finitely many $\gamma$. Defining the measure
\[ \nu_{x,3}^L := \delta \|m_\Gamma\| e^{-\delta L} \sum_{\substack{\gamma\in\Gamma_h \\ d_\Omega(x,\gamma x) \leq L}} \dirac_{\gamma^-} \otimes \dirac_{\gamma^+} \]
we have $\nu_{x,3}^L - \nu_{x,2}^L \to 0$ weakly as $L\to+\infty$, when restricted to $V(x,r)$.

It now follows from the preceding series of inequalities and convergences that
\[ e^{-2\delta r} \int \psi\,d\mu \leq \liminf \int \psi\,d\nu_{x,3}^L \leq \limsup \int \psi\,d\nu_{x,3}^L \leq \int \psi\,d\mu \]
for all $\psi \in C_c^+(V(x,r))$, and hence for $\psi \in C_c^+(\del^2\Omega \cap V(x,r))$, since the measures we are talking about are supported on $\del^2\Omega$. 

For $\gamma \in \Gamma_h$, we let $g_\gamma \subset S\Omega$ denote the oriented axis of $\gamma$, and $\Leb_\gamma$ denote the Lebesgue measure supported on $g_\gamma$, and finally
\[ \Em_{x,3}^L = \delta e^{-\delta L} \sum_{\substack{\gamma\in\Gamma_h \\ d_\Omega(x,\gamma x) \leq L}} \Leb_\gamma .\]
In other words, $\Em_{x,3}^L = \|m_\Gamma\|^{-1} \nu_{x,3}^L \otimes ds$. Let $\hat{V}(x,r) := V(x,r) \times \real \subset S\Omega$. From the preceding, we obtain, for $\psi \in C_c^+(\hat{V}(x,r))$, 
\begin{align} 
e^{-2\delta r} \|m_\Gamma\|^{-1} \int \psi\,dm & \leq \liminf \int \psi\,d\Em_{x,3}^L \notag \\ & \leq \limsup \int \psi\,d\Em_{x,3}^L \leq \|m_\Gamma\|^{-1} \int \psi\,dm \label{eqn:511_Emx3t} 
\end{align}

Then, arguing as in the proof of \cite[Th.\,5.1.1]{Roblin}, we may establish that for all $\varphi \in C_c^+(\hat{V}(x,r))$, as $t\to+\infty$,
\begin{align*} 
\resizebox{\linewidth}{!}{$\liminf \int \varphi\,d\Em_{x,3}^L \leq \liminf \int \varphi\,d\Epsy^L
  \leq \limsup \int \varphi\,d\Epsy^L \leq e^{(2\delta+1)r} \limsup \int \varphi\,d\Em_{x,3}^L$}
\end{align*}
so that
\begin{align*}
e^{-2\delta r} \|m_\Gamma\|^{-1} \int \varphi\,dm & \leq \liminf \int \varphi\,d\Epsy^L \\ 
 & \leq \limsup \int \varphi\,d\Epsy^L \leq e^{(2\delta+1)r} \|m_\Gamma\|^{-1} \int \varphi\,dm .
\end{align*} 

Appealing to a locally-finite partition of unity subordinate to a covering of $S\Omega$ by $\hat{V}(x,r)$ with $x \in \Omega$ and $r > 0$ fixed, we extend the validity of the preceding inequalities to all functions $\varphi \in C_c^+(S\Omega)$. It remains only to take $r \to 0$ to conclude the proof. 
\end{proof}

\begin{proof}[Proof of Theorem \ref{thm:pcgeod_equidist}(b)]
Let $\Epsy_\Gamma^L$ denote the measure $\delta L e^{\delta L} \sum_{g \in \mathcal{G}_\Gamma(L)} \dirac_g$ on $S\Omega/\Gamma$. By part (a), we already know that $\Epsy_\Gamma^L \to \frac{m_\Gamma}{\|m_\Gamma\|}$ weakly in $C_c(S\Omega/\Gamma)^*$ when $L\to+\infty$. We start by replacing $\Epsy_\Gamma^L$ by a nearby measure which will be better adapted to the argument to come, namely
\[ \Em_\Gamma^L := \delta e^{-\delta L} \sum_{g\in\mathcal{G}_\Gamma(L)} \ell(g) \dirac_g \]
(we remark that $\ell(g)\dirac_g$ is simply the non-normalized Lebesgue measure along $g$.) 

We may verify, by arguing as in the proof of \cite[Th.\,5.2]{Roblin}, that we still have $\Em_\Gamma^L \to \frac{m_\Gamma}{\|m_\Gamma\|}$ weakly in $C_c(S\Omega/\Gamma)^*$ when 
$L\to+\infty$. and that it suffices to show that 
$\Em_\Gamma^L$ converges weakly to $\frac{m_\Gamma}{\|m_\Gamma\|}$ in $C_b(S\Omega/\Gamma)^*$ as $L\to+\infty$, to obtain the same (desired) conclusion for $\Epsy_\Gamma^L$.

{\bf The rest of the proof consists in demonstrating that $\Em_\Gamma^L$ converges weakly to $\frac{m_\Gamma}{\|m_\Gamma\|}$ in $C_b(S\Omega/\Gamma)^*$ as $L\to+\infty$.} We present this step in more detail since it more intimately involves the Hilbert geometry in the cusps.

\newcommand{\fundom}{F}
\newcommand{\perifundom}{D}
Per Theorem \ref{thm:cvx_lf_fundoms}, choose a convex locally-finite fundamental domain $\fundom \subset \Omega$ for the action of $\Gamma$ on $\Omega$ with $S^*m$-negligible boundary, where $S^*m$ is the measure on $\Omega$ given by $S^*m(B) := m(SB)$. Then $S\fundom$ is a locally-finite fundamental domain for the action of $\Gamma$ on $S\Omega$, with $m$-negligible boundary.

Take $\mathcal{P}$ to be a system of representatives of $\Gamma$-orbits of parabolic fixed points of $\Gamma \actson \bar\Omega$, and, for $\xi_P \in \mathcal{P}$, let $P = \Stab_\Gamma(\xi_P)$. As in the last bit of \S1.4, $\mathcal{P}$ is finite and for each $\xi_P \in \mathcal{P}$ we can find a horoball $H_P$ based at $\xi_P$ such that $\gamma H_P \cap H_P \neq \varnothing$ if and only if $\gamma \in P$. Given $r > 0$, we let $H_P(r)$ denote the horoball contained in $H_P$ whose boundary is distance $r$ from that of $H_P$. Then, writing
\[ \fundom_r := \fundom \setminus \bigcup_{\xi_P \in \mathcal{P}} \Gamma H_P(r) \]
to denote the ``thick part'' of the fundamental domain, we know that the intersection of $\fundom_r$ with the convex hull of the limit set $\Lambda_\Gamma$ is compact in $\Omega$.

Let $\Em^L$ denote the measure on $S\Omega$ which projects to $\Em_\Gamma^L$, and $S^*\Em^L$ denote the measure on $\Omega$ defined by $S^*\Em^L(B) := \Em^L(SB)$. Since $\Em_\Gamma^L$ converges weakly to $\frac{m_\Gamma}{\|m_\Gamma\|}$ in $C_c(S\Omega/\Gamma)^*$ as $L\to+\infty$, and the intersection of $S\fundom_r$ with the support of $m$ (which also contains the support of $\Em^L$) is compact for each $r \geq 0$, it suffices to show that
\[ \limsup_{L\to+\infty} S^*\Em^L(\fundom \setminus \fundom_r) \to 0 \]
as $r \to +\infty$, or that, for each $\xi_P \in \mathcal{P}$,
\[ \limsup_{L\to+\infty} S^*\Em^L(\fundom \cap \Gamma H_P(r)) \to 0 \]
Informally: convergence in $C_c$ gives us control over the thick part, so {\bf it suffices to  control what happens in each of the finitely many cusps.}

The remainder of the argument will resemble a more refined version of the argument in the proof of Theorem \ref{thm:finite_BMmeas}: whereas there we had a finite bound for the measure of the cusps, here we want a bound that asymptotically goes to zero. 

Fix, from here on, $\xi_P \in \mathcal{P}$. We will omit, in the sequel, the index $P$ (so $H = H_P$, $\xi = \xi_P$, etc.) in the interest of brevity. Choose a compact fundamental domain  $\perifundom$ for the action of $\Gamma$ on $\Lambda_\Gamma \smallsetminus \{\xi\}$. Modifying if necessary the locally-finite fundamental domain $\fundom$ for $\Gamma$ which we had previously chosen, we may assume that $\fundom \cap \Gamma H = \fundom \cap H$, and hence that $\fundom \cap \Gamma H(r) = \fundom \cap H(r)$ for $r \geq 0$. 

Given a hyperbolic isometry $\gamma \in \Gamma$, we let $g_\gamma \subset \Omega$ denote its unoriented axis, and $\Leb_\gamma$ the Lebesgue measure along $g_\gamma$. We have $S^*\Em^L = \delta e^{-\delta L} \sum \ell(\gamma)$, where the sum is taken over $\gamma \in \Gamma_{hp}$ with $\ell(\gamma) \leq L$. Since the endpoints of $g_\gamma$ are in $\Lambda_\Gamma \setminus \{p\} = \bigcup_{\pi \in P} \pi \perifundom$, we may write
\[ S^*\Em^L(\fundom \cap H(r)) = \delta e^{-\delta L} \sum_{\pi_1 \in P} \sum_{\pi_2 \in P} \sum \Leb_\gamma(\fundom \cap H(r)) \]
where the third sum is taken over $\gamma \in \Gamma_{hp}$ with $\ell(\gamma) \leq L$ and with axis going from $\pi_1 \perifundom$ to $\pi_2 \perifundom$. As on \cite{Roblin}, p. 74, 
writing $\Gamma(L, \pi)$ to denote the set of $\gamma \in \Gamma_{hp}$ with $\ell(\gamma) \leq L$ and with axis going from $\perifundom$ to $\pi \perifundom$, 
we may rewrite this as
\begin{equation} 
S^*\Em^L(\fundom \cap H(r)) = \delta e^{-\delta L} \sum_{\pi \in P} \sum_{\gamma \in \Gamma(L,\pi)} \Leb_\gamma(H(r)) 
\label{eqn:52_decomp_horos} \end{equation}

{\bf We now bound from above the quantity $\sum_{\Gamma(L,\pi)} \Leb_\gamma(H(r))$.} Choose a geodesic going from $\perifundom$ to $\xi$, and denote by $x$ the point of intersection of this geodesic and the horosphere $\del H$. We will use the following geometric lemma, which uses the strict convexity of the horoballs:

\begin{lem} \label{lem:52}
Suppose we are given a compact fundamental domain $\perifundom$ for $P \actson \Lambda_\Gamma \smallsetminus \{\xi\}$, a geodesic $g_0$ from $\perifundom$ to $\xi$, and a horoball $H$. Write $x := g_0 \cap \del H$.

Then there exists $\kappa > 0$ such that if a geodesic $g$ from $\perifundom$ enters the closed horoball $\bar{H}$ at a point $y \in \del H$, then $d_\Omega(x,y) \leq \frac12 \kappa$.
\begin{proof}
We note that $F \subset \shadow_{r'}(\xi,x)$ for some $r' > 0$ that depends only on $P$ and $F$. By the relative compactness of $\perifundom$, we may take $r'$ independent of $g$. Since $\mathcal{P}$ is finite, there is a single $r'$ which works for all $\xi \in \mathcal{P}$. 
We can then take $\kappa = 2r'$. 
\end{proof}
\end{lem}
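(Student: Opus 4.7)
My plan is to combine a shadow-type inclusion for the compact set $\perifundom$ with the strict convexity of the horoball $\bar H$ to produce a uniform bound on the entry point $y$.

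First, I would establish that $\perifundom \subset \shadow_{r'}(\xi, x)$ for some $r' > 0$ depending only on $\perifundom$, $\xi$ and $x$. Since $\perifundom$ is compact in $\Lambda_\Gamma \smallsetminus \{\xi\}$, the function $\zeta \mapsto d_\Omega(x, (\xi\zeta))$ is continuous and finite-valued on $\perifundom$, hence bounded above; any $r'$ exceeding this bound works. As $\mathcal{P}$ is finite, one may take a single such $r'$ uniformly over all parabolic fixed points.

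Second, I would dispatch the special case in which $g$ has $\xi$ as its other endpoint, i.e.\ $g = (\zeta', \xi)$ with $\zeta' \in \perifundom$. The shadow inclusion provides a point $w \in g$ with $d_\Omega(x, w) \leq r'$. Since the Busemann function $\beta_\xi(x, \cdot)$ is $1$-Lipschitz on $\Omega$ and increases at unit speed along $g$ toward $\xi$, and since the entry point $y$ of $g$ into $\bar H$ is characterized by $\beta_\xi(x, y) = 0$ (noting $y \in \del H = \horosphere_\xi(x)$), we deduce $d_\Omega(w, y) = |\beta_\xi(x, w)| \leq r'$, whence $d_\Omega(x, y) \leq 2r'$ by the triangle inequality.

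The main obstacle is extending the bound to an arbitrary geodesic $g = (\zeta', \eta')$ with $\eta' \neq \xi$, since the shadow argument controls only the companion geodesic $g'' := (\zeta', \xi)$, not $g$ itself. Here I would invoke the strict convexity of $\bar H$ and the $C^1$ regularity of $\del H$: $g$ and $g''$ share the endpoint $\zeta'$, and any geodesic through $\zeta'$ entering $\bar H$ must cross $\del H$ within a uniformly bounded neighborhood of the entry point $y''$ of $g''$, because strict convexity prevents the horoball from being grazed far from $y''$ by a geodesic through a fixed boundary point. A careful geometric argument (for instance, by a compactness argument on the space of tangent directions at $\zeta'$ pointing into $\bar H$, or by directly estimating the tangency locus on $\del H$) should produce a uniform constant $C = C(\perifundom, \xi, x)$ with $d_\Omega(y, y'') \leq C$; combined with the previous step, one obtains $d_\Omega(x, y) \leq 2r' + C \leq \kappa/2$ for a suitable $\kappa$, taken uniformly over $\xi \in \mathcal{P}$.
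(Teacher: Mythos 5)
Your proposal is correct and starts from the same place as the paper's proof: the shadow inclusion $\perifundom \subset \shadow_{r'}(\xi,x)$ with $r'$ uniform by compactness of $\perifundom$ and finiteness of $\mathcal{P}$. The difference is that the paper stops there and simply declares $\kappa = 2r'$, whereas you correctly observe that the shadow inclusion only controls geodesics from $\perifundom$ \emph{to $\xi$}, and that passing from these to an arbitrary geodesic $g = (\zeta'\eta')$ entering $\bar H$ requires a further comparison. Your step 2 (the Busemann computation showing that the entry point of $(\zeta'\xi)$ into $\bar H$ lies within $2r'$ of $x$, using that $\beta_\xi(x,\cdot)$ is $1$-Lipschitz and parametrizes $(\zeta'\xi)$ by arclength with $\del H = \horosphere_\xi(x)$ as its zero set) is complete and correct, and is exactly the content the paper leaves implicit; note that it already shows the paper's stated constant $\kappa = 2r'$ is optimistic, though only the existence of some $\kappa$ is used downstream.

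Your step 3 is the real content and is left as a sketch, but the claim is true and your suggested compactness argument can be carried out. Perhaps the cleanest completion in this setting: in an affine chart, $\bar H \cap \del\Omega = \{\xi\}$, and by $C^1$-regularity and strict convexity the portion of $\del H$ visible from a point $\zeta' \in \del\Omega \smallsetminus \{\xi\}$ (i.e.\ the locus of possible entry points of lines through $\zeta'$) avoids a Euclidean neighborhood of $\xi$ whose size depends continuously on $\zeta'$; by compactness of $\perifundom$ this neighborhood is uniform, so all entry points from all $\zeta' \in \perifundom$ lie in a fixed compact subset of $\Omega$, which has finite Hilbert diameter. (This in fact subsumes steps 1 and 2, since $x$ itself is such an entry point.) Alternatively, since in the application all endpoints lie in $\Lambda_\Gamma$, one can use $\delta$-thinness of ideal triangles in the Gromov-hyperbolic convex hull $CH(\Lambda_\Gamma)$ to place the entry point of $g$ within bounded distance of $(\zeta'\xi)$ and then invoke your step 2. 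Either way, your decomposition identifies precisely the gap in the paper's one-line argument and fills it along sound lines.
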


Now consider $\gamma \in \Gamma(L,\pi)$ such that $\Leb_\gamma(H(r)) > 0$, i.e. the axis $g_\gamma$ intersects $H(r)$. Let $a$ be the point where $g_\gamma$ enters $\bar{H}$, $b$ the point where it enters $\overline{H(r)}$, $c$ the point where it exits $\overline{H(r)}$ and $d$ the point where it exits $\bar{H}$. By Lemma \ref{lem:52}, we have on the one hand $d_\Omega(a,x) \leq \frac 12 \kappa$, and on the other hand $d_\Omega(d,\pi x) = d_\Omega(\pi^{-1}d,x) \leq \frac12 \kappa$ since the axis $\pi^{-1} g_\gamma$ runs, in the opposite direction, from $\perifundom$, entering $\pi^{-1}\bar{H} = \bar{H}$ at the point $\pi^{-1}d$. Moreover, $d_\Omega(a,b) \geq r$ and $d_\Omega(c,d) \geq r$. It then follows that
\[ \Leb_\gamma(H(r)) = d_\Omega(b,c) = d_\Omega(a,d) - d_\Omega(a,b) - d_\Omega(c,d) \leq d_\Omega(x,\pi x) + \kappa - 2r \]
Note, in particular, that $d_\Omega(x,\pi x) \geq 2r - \kappa$.
Moreover, $d_\Omega(x, \gamma x) \leq d_\Omega(a, \gamma a) + \kappa = \ell(\gamma) + \kappa \leq L + \kappa$. We will need these facts below, soon.

We will finish by bounding from above the cardinality of the set $\Gamma(L,\pi,r)$ of $\gamma \in \Gamma(L,\pi)$ such that $\Leb_\gamma(H(r)) > 0$, by applying the shadow lemma (Lemma \ref{lem:sullivan_shadow}) to the $\Gamma$-equivariant conformal density $\mu$ of dimension $\delta$ associated to $m$. 

Take $R_0 > \kappa$ large enough, depending only on $\Gamma$, so that the shadow lemma applies to the shadow of the balls $B(\gamma x, R)$ viewed from $x$ for all $R \geq R_0$. 
Since the axis $g_\gamma$ is at a distance of at most $\frac12 \kappa \leq \frac12 R$ from $x$ as well as from $\gamma x$, any shadow $\shadow_\gamma$ of $B(\gamma x, R)$ viewed from $x$ contains the attracting fixed point of $\gamma$, and thus intersects $\pi \perifundom$. 
$\pi^{-1}\shadow_\gamma$ is the shadow of $B(\pi^{-1}\gamma x, R)$ viewed from $\pi^{-1}x$ and intersects $\perifundom$ (indeed, contains the repelling fixed point of $\gamma$.)

Because $\perifundom$ is compact, we can choose $R \geq R_0$, depending only on $\Gamma$ and $P$, such that there exists some compact set $K \subset \del\Omega \smallsetminus \{\xi\}$ containing all of the $\pi^{-1} \shadow_\gamma$ for $\gamma \in \Gamma(L,\pi,r)$. To ensure that $R$ is not so large that these shadows end up being all of $\del\Omega$, it is helpful to require e.g. $R < 2r$; for the shadow lemma to still apply we would then need $r > \frac\kappa2$, but this is fine since we are taking $r\to+\infty$. Fix this value of $R$, and  let $C = C_{x,R}$ be the constant from the shadow lemma.

Given $\gamma \in \Gamma(L,\pi,r)$, we have $\shadow_\gamma \subset \pi K$. Now for all $t > 0$, the family 
\[ \{\shadow_\gamma : \gamma\in\Gamma(L,\pi,r), t-1 < d_\Omega(x,\gamma x) \leq t \} \] 
forms a cover of some open subset of $\pi K$ of uniformly bounded multiplicity $M$, by Lemma \ref{lem:roblin_1B}. Since $\mu_x(\shadow_\gamma) \geq C^{-1} e^{-\delta t}$ by the shadow lemma, it follows that 
\[ \# \{ \gamma\in\Gamma(L,\pi,r) \st t-1 < d_\Omega(x,\gamma x) \leq t \} \leq CM e^{\delta t} \mu_x(\pi K) .\] 

We have seen above that $d_\Omega(x,\gamma x) \leq L + \kappa$ for all $\gamma \in \Gamma(L,\pi,r)$; by summing the previous estimate over positive integers $t$, we have that the cardinality of $\Gamma(L,\pi,r)$ is bounded above by $\frac{CM e^{\delta\kappa}}{e^\delta-1} e^{\delta L} \mu_x(\pi K)$.

It hence suffices to bound $\mu_x(\pi K)$ from above. We have
\[ \mu_x(\pi K) = \mu_{\pi^{-1} x}(K) = \int_K e^{-\delta \beta_\xi(\pi^{-1} x, x)} \,d\mu_x(\xi) .\]
Now observe that we can choose $r_0 > 0$ depending only on $P$ such that $K$ is contained in the shadow, viewed from $\pi x$, of a ball with center $x$ and radius $r_0$. An application of Lemma \ref{lem:roblin_12} then shows that $\mu_x(\pi K)$ is bounded above by $e^{2\delta r_0} e^{-\delta\cdot d_\Omega(x, \pi x)}$.

Putting these together, we obtain, ultimately, that 
\[ \#\Gamma(L,\pi,r) \leq \frac{CM e^{\delta(\kappa+2r_0)}}{e^\delta - 1} e^{\delta L - \delta\cdot d_\Omega(x,\pi x)} .\]

Combining this with (\ref{eqn:52_decomp_horos}) yields 
\[ S^*\Em^L(\fundom \cap H(r)) \leq \hat{C} \sum_{\substack{\pi\in P \\ d_\Omega(x,\pi x) > 2r-\kappa}} (d_\Omega(x,\pi x) - 2r + \kappa) e^{-\delta\cdot d_\Omega(x,\pi x)} \]
where $\hat{C} = \frac{\delta CMe^{\delta(\kappa+2r_0)}}{e^\delta-1}$ is a constant independent of $L$ and $r$. 

We now observe that, because $\delta_\Gamma > \delta_P$ (Lemma \ref{lem:critgap_para}), 
$\sum_{\pi\in P} d_\Omega(x,\pi x) e^{-\delta\cdot d_\Omega(x,\pi x)}$ converges.
It then follows from the convergence of this series that
\[ \limsup_{L\to+\infty} S^*\Em^L(\fundom \cap H(r)) \to 0 \]
as $r\to+\infty$. Thus we have controlled the measure in the cusps, and as described above this concludes the proof.
\end{proof}

\printbibliography

\end{document}